\newcommandx{\jow}[2][1=]{\todo[linecolor=orange,backgroundcolor=orange!25,bordercolor=orange,#1]{#2}}
\newcommandx{\mateus}[2][1=]{\todo[linecolor=blue,backgroundcolor=blue!25,bordercolor=blue,#1]{#2}}
\def\R{\mathbb{R}}
\def\N{\mathbb{N}}
\def\Z{\mathbb{Z}}
\def\C{\mathbb{C}}
\def\ee{\mathrm{e}}
\def\HH{\mathbb{H}}
\def\ff{\mathfrak{f}}
\newcommand{\eps}{\varepsilon}
\newcommand{\mmd}{\mathrm{d}}
\renewcommand{\H}{\mathcal{H}}
\newcommandx{\eq}{\approxeq}
\newcommand{\supp}{\mathrm{supp}}
\renewcommand{\H}{\mathcal{H}}
\newcommand{\sinc}{\mathrm{sinc}}
\newtheorem{theorem}{Theorem}
\newtheorem{corollary}[theorem]{Corollary}
\newtheorem{proposition}[theorem]{Proposition}
\newtheorem{lemma}[theorem]{Lemma}
\newtheorem{claim}[theorem]{Claim}
\newtheorem{question}{Question}
\newtheorem{conjecture}[theorem]{Conjecture}
\numberwithin{equation}{section}
\numberwithin{theorem}{section}
\begin{document}
\title{Perturbed interpolation formulae and applications}
\author{Jo\~ao P. G. Ramos and Mateus Sousa}
\maketitle

\begin{abstract}
We employ functional analysis techniques in order to deduce that some classical and recent interpolation results in Fourier analysis can be suitably perturbed. As an application of our techniques, we obtain generalizations 
of Kadec's $1/4-$theorem for interpolation formulae in the Paley--Wiener space both in the real and complex case, as well as a perturbation result on the recent Radchenko--Viazovska interpolation result \cite{RV} and the 
Cohn--Kumar--Miller--Radchenko--Viazovska \cite{CKMRV2} result for Fourier interpolation with derivatives in dimensions $8$ and $24.$ We also provide several applications of the main results and techniques, all relating to 
recent contributions in interpolation formulae and uniqueness sets for the Fourier transform. 
\end{abstract}

\tableofcontents

\section{Introduction}

A fundamental question in analysis is that of how to recover a function $f$ from some subset $\{f(x)\}_{x \in A}$ of its values, together with some information on its 
\emph{Fourier transform} $\widehat{f}:\R \to \C$, which we define to be
\begin{equation}\label{eq Fourier transform}
\widehat{f}(\xi) = \int_{\R} f(x) e^{-2\pi i x \xi} \, \mmd x.
\end{equation} 
The perhaps most classical result in that regard is the \emph{Shannon--Whittaker interpolation formula}: if $\widehat{f}$ is supported on an interval $[-\delta/2,\delta/2],$ then 
\begin{equation}\label{eq shannon form}
    f(x)=\sum_{k=-\infty}^{\infty}f(k/\delta)\sinc(\delta x- k),
\end{equation}
where convergence holds both in $L^2(\R)$ and uniformly, where we let $\sinc(x)=\tfrac{\sin(\pi x)}{\pi x}$. In spite of this classical formula, a major recent breakthrough in regard to the problem of determining which conditions
on the sets $A, B \subset \R$ imply that a function $f \in \mathcal{S}(\R)$ is uniquely determined by its values at $A$ and the values of its Fourier transform at $B$ was made by Radchenko and Viazovska \cite{RV}, where the authors prove that, whenever $f: \R \to \R$ is even and Schwartz, 
then 
\begin{equation}\label{eq:interpolation_schwartz}
    f(x)=\sum_{k=0}^{\infty}f(\sqrt{k})a_k(x)+\sum_{k=0}^{\infty}\widehat{f}(\sqrt{k})\widehat{a_k}(x).
\end{equation}
Radchenko and Viazovska's result and its techniques were somewhat inspired by Viazovska's recent solution to the sphere packing problem in dimension 8 \cite{Viazovska1}, and her subsequent work with Cohn, Kumar, Miller and Radchenko to solve 
the same problem in dimension 24 \cite{CKMRV1}, as they include the usage of \emph{modular forms} in order to construct some special functions with particular properties at the desired nodes of interpolation. 

Subsequently to the Radchenko--Viazovska result, other recent works have successfully used a similar approach in order to construct interpolation and uniqueness formulae. Among those, we mention the following: 
\begin{enumerate}
 \item In \cite{CohnGoncalves}, Cohn and Gon\c calves use a modular form construction in order to obtain that there are $c_j > 0, \, j \in \N,$ so that, for each $f \in \mathcal{S}_{rad}(\R^{12})$ real, 
 \begin{equation}\label{eq poisson felipe}
 f(0) - \sum_{j \ge 1} c_j f(\sqrt{2j}) = - \widehat{f}(0) + \sum_{j \ge 1} c_j \widehat{f}(\sqrt{2j}).
 \end{equation}
 Such a formula enables the authors to prove a sharp version of a root uncertainty principle first raised by Bourgain, Clozel and Kahane \cite{BCK} in dimension 12; see, e.g., \cite{GOSS17, GOSR20, GOSR201} and the references therein for more information 
 on this topic;
 \item On the other hand, in \cite{CKMRV2}, Cohn, Kumar, Miller, Radchenko and Viazovska develop upon the basic ideas of \cite{RV} to be able to prove \emph{universal optimality results} about the $E_8$ and Leech lattices in dimensions 
 $8$ and $24,$ respectively. In order to do so, they prove interpolation formulae in such dimensions that involve the values of $f(\sqrt{2n}), f'(\sqrt{2n}), \widehat{f}(\sqrt{2n}), \widehat{f}'(\sqrt{2n}),$ where $f$ is a radial, Schwartz function, 
 and $n \ge n_0,$ with $n_0 = 1$ if $d = 8,$ and $n_0 = 2$ in case $d=24;$ 
 \item Finally, more recently, other developments in the theory of interpolation formulae given values on both Fourier and spatial side has been made by Stoller \cite{Stoller}, who considered the problem of recovering any funtion in $\R^d$ from 
 its restrictions and the restrictions of its Fourier transforms to spheres of radii $\sqrt{n}, \, n >0,$ for any $d > 0.$ Moreover, we mention also the more recent work of Bondarendo, Radchenko and Seip \cite{BRS}, which generalizes Radchenko and Viazovska's 
 construction of the interpolating functions to prove interpolation formulae for some classes of functions $f$ that take into account the values of $\widehat{f}$ at $\log{n}/4\pi,$ and the values of $f$ at a sequence $(\rho-1/2)/i,$ where $\rho$ ranges over non-trivial zeros of some $L-$function 
 with positive imaginary part. 
\end{enumerate}

One fundamental point to stress is that, in a suitable way, all the previously mentioned results are related to some sort of \emph{summation formula}, the most basic instance of such being the classical Poisson summation formula 
\begin{equation*}
\sum_{m \in \Z} f(m) = \sum_{n \in \Z} \widehat{f}(n),
\end{equation*}
which is a particular case, for instance, of \eqref{eq:interpolation_schwartz} in case we set $x =0.$ Clearly, the formula \eqref{eq poisson felipe} is also a manifestation of such a principle that implies rigidity between certain
values of $f$ and other values of $\widehat{f}$.

In that regard, these topics can be inserted into the framework of \emph{crystalline measures}. Indeed, if we adopt the classical definition of 
a crystalline measure to be a distribution with locally finite support, such that its Fourier transform possesses the same support property, we will see that the Poisson summation formula implies, for instance, that 
the measure $\delta_{\Z}$ is not only a crystalline measure, but also \emph{self-dual}, in the sense that $\delta_{\Z} = \widehat{\delta}_{\Z}$ holds in $\mathcal{S}'(\R).$ 

Outside the scope of interpolation formulae per se, we mention the works \cite{LO13, LO15, Meyer}, where the authors explore in a deeper lever structural questions on crystalline measures. 
In particular, in \cite{Meyer}, Meyer exhibits examples of crystalline measures with self-duality properties, and uses modular forms 
to construct explicity examples of non-zero self-dual crystalline measures $\mu$ supported on $\{\pm\sqrt{k+a}, k \in \Z\},$  for $a \in \{9,24,72\}.$ We also mention the recent work of Kurasov and Sarnak \cite{KurasovSarnak}, 
where the authors, as a by-product of investigations of the additive structure of the spectrum of metric graphs, prove that there are exotic examples of \emph{positive} crystalline measures other than generalized Dirac combs. 

Our investigation in this paper focuses on both classical and modern results in the theory of such interpolation formulae and crystalline measures. In generic terms, we are interested in determining when, given an interpolation
formula such as \eqref{eq shannon form} or \eqref{eq:interpolation_schwartz}, we can \emph{perturb} it suitably. That is, given a sequence of real numbers $\{\eps_k\}_{k \in \Z},$ under which conditions can we recover $f$ from the values 
\begin{equation}\label{eq perturbed}
\{ (f(s_n+\eps_n), \widehat{f}(\widehat{s_n}+\eps_n))\}_{n \in \Z},
\end{equation}
given that we can recover $f$ from $\{(f(s_n),\widehat{f}(\widehat{s_n}))\}_{n \in \Z}$? 

In this manuscript, the main ideia is to study such perturbations of interpolation formulae for band-limited and Schwartz functions through functional analysis. Indeed, most of our considerations are based off the idea that, 
whenever an operator $T : B \to B,$  where $B$ is a Banach space, satisfies that 
\[
\|T-I\|_{B \to B} < 1,
\]
then $T$ is, in fact, a \emph{bijection} with continuous inverse $T^{-1} : B \to B.$ In fact, in all our considerations on interpolation formulae below, some form of this principle will be employed, and even the importance of 
other proofs and results in the paper, such as Theorem \ref{eq improvement decay}, arise naturally when trying to employ this principle to different contexts. 

\subsection{Perturbations and Interpolation formulae in the band-limited case} The question of when we are able to recover the values of a function such that its Fourier transform is supported in $[-1/2,1/2]$ from 
its values at $n + \eps_n$ is well-known, having been asked by Paley and Wiener \cite{PW}, where the authors prove that recovery -- and also an associated interpolation formula -- is possible as long as 
$\sup_n |\eps_n| < \pi^{-2}.$ Many results relate to the original problem of Paley and Wiener, but the most celebrated of them all is the so-called Kadec-$1/4$ theorem, 
which states that, as long as $\sup_n |\eps_n| < \frac{1}{4},$ then one can recover any $f \in L^2(\R)$ which has Fourier support on $[-1/2,1/2]$ from its values at $n + \eps_n, \, n \in \Z.$; see \cite{K14} 
for Kadec's original proof and \cite{ALV16} for a generalization.

Our first results provide one with a simpler proof of a particular range of Kadec's result. 

\begin{theorem}\label{th:perturbed_PW}
Let $\{\varepsilon_k\}_{k\in\Z}$ be a sequence of real numbers and consider $L=\sup_k|\varepsilon_k|$. If $L<1/2$ and 
\begin{align*}
    1-\frac{\sin(\pi L)}{\pi L}+\frac{\pi}{3}\frac{L\sin\pi L}{1-L}+\sin\pi L<1,
\end{align*}
then any function $f \in PW_{\pi}$ is completely determined by its values $\{f(n+\eps_n)\}_{n \in \Z},$ and there is $C=C(L)>0$ such that
\begin{align*}
    \frac{1}{C}\sum_{n\in\Z}|f(n+\varepsilon_n)|^2
    \leq\|f\|_2^2\leq    {C}\sum_{n\in\Z}|f(n+\varepsilon_n)|^2,
\end{align*}
for all $f \in PW_{\pi}.$

Moreover, there are functions $g_n\in PW_\pi(\R)$ such that for every $f\in PW_\pi$, the following identity holds:
\begin{align*}
    f(x)=\sum_{n\in\Z}f(n+\varepsilon_n)g_n(x),
\end{align*}
where the right-hand side converges absolutely. 
\end{theorem}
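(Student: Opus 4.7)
I plan to adopt the Neumann-series perspective outlined in the introduction, treating the perturbed sampling operator as a small perturbation of the Shannon--Whittaker isomorphism. Let $S_0 \colon PW_\pi \to \ell^2(\Z)$ denote the classical sampling operator $S_0 f = (f(n))_n$, which by \eqref{eq shannon form} is an isometric isomorphism with inverse $S_0^{-1}(a) = \sum_n a_n \sinc(\cdot - n)$, and let $Sf = (f(n + \varepsilon_n))_n$ denote the perturbed sampling. Setting $T := S \circ S_0^{-1} \colon \ell^2 \to \ell^2$, if one can establish
\[
\|T - I\|_{\ell^2 \to \ell^2} < 1,
\]
then $T$ is invertible by Neumann series, so $S = T \circ S_0$ is an isomorphism from $PW_\pi$ onto $\ell^2$. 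The two-sided frame inequalities of the theorem follow immediately, with $C = \max\bigl((1 + \|T - I\|)^2,\, (1 - \|T - I\|)^{-2}\bigr)$.

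The technical crux is the norm estimate for $T - I$. Its matrix in the canonical basis is
\[
M_{n,k} = \sinc(n + \varepsilon_n - k) - \delta_{n,k}, \qquad n, k \in \Z,
\]
with diagonal entries $M_{n,n} = \sinc(\varepsilon_n) - 1$ and off-diagonal entries $M_{n, n+m} = \tfrac{(-1)^{m+1} \sin(\pi \varepsilon_n)}{\pi(m - \varepsilon_n)}$ for $m \ne 0$. I would split $M = D + O$ and bound the two parts separately. The diagonal is immediate: since $\sinc$ is positive and decreasing on $[0, 1)$,
\[
\|D\|_{\ell^2 \to \ell^2} = \sup_n (1 - \sinc(\varepsilon_n)) \le 1 - \tfrac{\sin(\pi L)}{\pi L},
\]
accounting for the first term of the hypothesis. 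For the off-diagonal part $O$, after extracting the uniform factor $|\sin(\pi \varepsilon_n)| \le \sin(\pi L)$, I intend to estimate $\|O\|$ by combining explicit shift-operator bounds for the near-diagonal contributions with a Schur-type estimate for the tail $|m|$ large; for the latter, the classical identity $\sum_{m \in \Z}(m - L)^{-2} = \pi^2/\sin^2(\pi L)$ provides sharp control on the tail sums $\sum_{|m| \ge M}(m - \varepsilon_n)^{-2}$. A careful execution of this partition should produce the two remaining terms $\sin(\pi L)$ and $\tfrac{\pi L \sin(\pi L)}{3(1-L)}$.

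Once $S$ is known to be an isomorphism, the interpolation formula follows by setting $g_n := S^{-1}(e_n) \in PW_\pi$ for each $n$, where $(e_n)$ is the canonical $\ell^2$-basis: $f = S^{-1}(Sf) = \sum_n f(n + \varepsilon_n)\, g_n$ converges in $PW_\pi$, and hence uniformly. For absolute convergence at a fixed $x \in \R$, Cauchy--Schwarz reduces matters to the $\ell^2$-summability of $(f(n + \varepsilon_n))_n$ and of $(g_n(x))_n$. The first follows from the upper frame bound, and the second from the Riesz-basis property of $\{g_n\}$ applied to the reproducing kernel $K_x(y) = \sinc(y - x) \in PW_\pi$, namely
\[
\sum_n |g_n(x)|^2 = \sum_n |\langle g_n, K_x\rangle|^2 \lesssim \|K_x\|_2^2 = 1.
\]

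The principal obstacle is the off-diagonal norm estimate. A naive Schur row-sum bound diverges logarithmically, since $\sum_{m \ne 0}(|m| - L)^{-1} = \infty$, so one must partition $O$ into pieces whose row and column sums remain summable and track the constants delicately enough to recover the specific numerical coefficients in the hypothesis — in particular the slightly curious factor $\tfrac{\pi L}{3(1-L)}$, which on the one hand reflects the sharp tail-sum estimate above and on the other is what prevents this proof from reaching Kadec's optimal threshold $L < 1/4$.
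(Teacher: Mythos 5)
Your overall architecture (Neumann series for $T=S\circ S_0^{-1}$, the diagonal bound $1-\sinc(L)$, the frame inequalities from invertibility, $g_n=S^{-1}e_n$, and absolute convergence via Cauchy--Schwarz and the reproducing kernel) coincides with the paper's proof. However, there is a genuine gap at the central step, the operator-norm bound for the off-diagonal part $O$. You correctly observe that the absolute row sums $\sum_{m\neq 0}|m-\varepsilon_n|^{-1}$ diverge, but your proposed remedy --- partitioning $O$ into a near-diagonal piece and a tail, and applying Schur-type or tail-sum estimates to each --- cannot succeed: the divergence is not an artifact of a ``naive'' choice of weights. Any bound on $O$ obtained by replacing its entries with their absolute values (Schur with arbitrary weights, Hilbert--Schmidt, or a triangle inequality over finitely many pieces) is doomed, because the tail piece alone, estimated by absolute values, is unbounded on $\ell^2$ no matter where you cut; and the row-wise $\ell^2$ control $\sum_{|m|\ge M}(m-\varepsilon_n)^{-2}<\infty$ that you invoke only yields an $\ell^2\to\ell^\infty$ bound (summing the resulting uniform-in-$n$ bound over $n$ diverges). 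The bound must exploit the sign cancellation in the kernel $(-1)^{n-k}/(n+\varepsilon_n-k)$, and no absolute-value method sees it.

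The missing idea, which is exactly what the paper does, is to recognize the off-diagonal part as
\[
O(a)(n)=\frac{\sin(\pi\varepsilon_n)}{\pi}\,\H_\varepsilon(a)(n),\qquad \H_\varepsilon(a)(n)=\sum_{k\neq n}\frac{(-1)^{n-k}a_k}{n+\varepsilon_n-k},
\]
and to compare $\H_\varepsilon$ with the unperturbed alternating discrete Hilbert transform $\H_0$, whose operator norm on $\ell^2(\Z)$ is exactly $\pi$ (a cancellation-based fact, read off from the boundedness of the Fourier symbol). The difference $\H_0-\H_\varepsilon$ has kernel $\varepsilon_n\cdot\big((n-k)(n+\varepsilon_n-k)\big)^{-1}$, which \emph{is} absolutely summable and is dominated by $\frac{L}{1-L}$ times the convolution operator with kernel $|n-k|^{-2}$, of norm $\pi^2/3$ by Plancherel. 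This is precisely where your two mystery constants come from: $\sin(\pi L)=\frac{\sin(\pi L)}{\pi}\cdot\pi$ and $\frac{\pi L\sin(\pi L)}{3(1-L)}=\frac{\sin(\pi L)}{\pi}\cdot\frac{\pi^2}{3}\cdot\frac{L}{1-L}$. Without this reduction to a known cancellative operator plus an absolutely convergent perturbation, the estimate you need cannot be completed.
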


The condition in Theorem \ref{th:perturbed_PW} is satisfied for $L < 0.239,$ which possesses only a $0.011$ gap to Kadec's result. The main difference, however, that while Kadec's proof relies on a clever expansion of the underlying functions in a different 
orthonormal basis, we have almost not used orthogonality in our considerations. We have, nonetheless, chosen not to pursue the path of exploring orthogonality in this question much deeper in order not to make the exposition longer.

We must also remark that, in the proof of such a result, one can use complex numbers for perturbations. The difference is that we have to take into account the sine of complex numbers, and the result would be $L<0.2125$ instead of $L<0.239$. This only falls very mildly short of the results in  \cite[Theorem 3]{ALV16}, where $L<0.218$ is achieved in the complex setting, and our methods 
of proof are relatively simpler in comparison to those of \cite{ALV16}, where the authors must enter the realm of Lamb-Oseen functions and constants. Also, we do not make any use of the orthogonality, which could be exploited to improve on the current result. 

\medskip

As another application of the idea of inverting an operator, we mention a couple of results related to Vaaler's interpolation formula. In \cite{Vaaler}, J. Vaaler proved, as means to study extremal problems in Fourier analysis, the following counterpart to 
the Shannon--Whittaker interpolation formula: let $f \in L^2(\R),$ and suppose that $\widehat{f}$ is supported on $[-1,1].$ Then 
\begin{equation}\label{eq vaaler interpol}
     f(x)=\frac{\sin^2(\pi x)}{\pi^2}\sum_{k\in\Z}\left\{\frac{f(k)}{(x-k)^2}+\frac{f'(k)}{x-k}\right\}.
\end{equation}
This can be seen as a natural tradeoff: \eqref{eq shannon form} demands that we have information at $\frac{1}{2} \Z$ in order to recover the functions $f$ as stated above. On the other hand, Vaaler's result only demands information at $\Z,$ but one must pay the price of also providing it 
for the derivative. 

The first result concerning \eqref{eq vaaler interpol} is a \emph{direct} deduction of its validity from the Shannon--Whittaker formula \eqref{eq shannon form}. We state it, for completeness, in the following form.

\begin{theorem}\label{thm shannon-to-vaaler} Fix a sequence $\{a_k\}_{k \in \Z} \in \ell^2(\Z).$ Consider the function $f \in PW_{\pi}$ given by 
\[
f(x) = \sum_{n \in \Z} a_n \sinc(x-n),
\]
for each $x \in \R.$ Then the interpolation formula 
\[
f(x) = \frac{4\sin^2(\tfrac{1}{2}\pi x)}{\pi^2}\sum_{j\in\Z}\left\{\frac{a_{2k}}{(x-2k)^2}+\frac{b_{2k}}{x-2k}\right\}
\]
holds, where the right-hand side converges uniformly on compact sets, and we let 
\[
b_{k} = \sum_{j\neq k}\frac{a_j}{k-j}(-1)^{k-j}.
\]
\end{theorem}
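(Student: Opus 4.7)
The plan is to interpret the right-hand side of the claimed identity as an element $V \in PW_\pi$, to verify that $V$ agrees with $f$ at \emph{every} integer (not just every even integer), and then to invoke Shannon--Whittaker \eqref{eq shannon form} applied to $V - f \in PW_\pi$ to conclude $V \equiv f$.

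A preliminary step is to identify $b_{2k}$ with $f'(2k)$. Differentiating the Shannon expansion term by term, which is legitimate since $f \in PW_\pi$ forces $f' \in PW_\pi$, and using $\sinc'(m) = (-1)^m/m$ for $m \in \Z\setminus\{0\}$ together with $\sinc'(0) = 0$, one obtains
\[
f'(2k) = \sum_{n \ne 2k}a_n\frac{(-1)^{2k-n}}{2k-n} = b_{2k},
\]
so $\{b_{2k}\}_{k \in \Z} \in \ell^2$. The right-hand side of the claimed identity can then be rewritten as
\[
V(x) = \sum_{k\in\Z}a_{2k}\sinc^2\!\bigl(\tfrac{x}{2}-k\bigr) + \sum_{k\in\Z}b_{2k}(x-2k)\sinc^2\!\bigl(\tfrac{x}{2}-k\bigr).
\]
Both series converge absolutely and uniformly on compact subsets of $\R$ via Cauchy--Schwarz (using the $\ell^2$ coefficient sequences and the square-summability of the corresponding spatial weights), with the apparent poles on $2\Z$ cancelled by the double zero of $\sin^2(\pi x/2)$. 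Since $\sinc^2(x/2)$ and $x\sinc^2(x/2)$ both have Fourier transforms supported in $[-1/2,1/2]$, a Plancherel argument (or simply $L^2$-closedness of $PW_\pi$) places $V \in PW_\pi$.

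It then remains to check $V(j) = f(j)$ for every $j \in \Z$. The even case $j = 2m$ is immediate: only the $k = m$ term in each series contributes non-trivially and yields $V(2m) = a_{2m} = f(2m)$. For the odd case $j = 2m+1$ one has $\sin^2(\pi(2m+1)/2) = 1$, so the claim reduces to
\[
a_{2m+1} = \frac{4}{\pi^2}\Bigl[\sum_k\frac{a_{2k}}{(2m+1-2k)^2} + \sum_k\frac{b_{2k}}{2m+1-2k}\Bigr].
\]
Substituting the explicit formula for $b_{2k}$, swapping the order of summation, and applying the partial-fraction identity
\[
\frac{1}{(2m+1-2j)(2j-l)} = \frac{1}{2m+1-l}\Bigl[\frac{1}{2m+1-2j}+\frac{1}{2j-l}\Bigr], \qquad l \ne 2m+1,
\]
the contribution from all $l \ne 2m+1$ cancels exactly the sum $\sum_k a_{2k}/(2m+1-2k)^2$ after careful bookkeeping of excluded indices and use of the symmetric-sum identities $\sum_{j} 1/(2j+1) = 0$ and $\sum_{j\ne 0}1/j = 0$. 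The ``diagonal'' term $l = 2m+1$, where the partial fractions degenerate, evaluates to $-\sum_j 1/(2j-(2m+1))^2 = -\pi^2/4$ via the standard identity $\sum_{j \in \Z} 1/(2j-1)^2 = \pi^2/4$; combined with the factor $a_{2m+1}(-1)^{2m+1} = -a_{2m+1}$ this produces precisely $a_{2m+1}\cdot\pi^2/4$, and multiplication by $4/\pi^2$ recovers $a_{2m+1}$.

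The conclusion $V \equiv f$ now follows from applying \eqref{eq shannon form} to $V - f \in PW_\pi$. The main technical obstacle is the odd-node identity: a naive swap of summation in the double sum loses the diagonal $l = 2m+1$ contribution, which is precisely the term responsible for producing $a_{2m+1}$. A clean way to make the manipulations fully rigorous is first to reduce to finitely supported sequences $a$, where all sums are finite and swaps are trivial, and then to pass to the general $\ell^2$ case by continuity of both sides as bounded linear functionals of $a$.
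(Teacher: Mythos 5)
Your proposal is correct, but it takes a genuinely different route from the paper. The paper argues ``forward'': it splits the Shannon expansion of $f$ into even- and odd-indexed samples, uses the fact that the half-integer discrete Hilbert transform $\H_1$ is unitary with explicit inverse $\H_2$ to solve for the odd samples $f(2k-1)$ in terms of $f(2k)$ and $f'(2k)$, substitutes back into Shannon--Whittaker, and resums the resulting double series $A+B+C$ via digamma-function identities ($\psi(1-z)-\psi(z)=\pi\cot\pi z$, etc.). You instead argue ``backward'': you define the candidate $V\in PW_{\pi}$, check $V=f$ on all of $\Z$ (trivially at even integers; at odd integers via partial fractions, the principal-value identities $\sum_{n\,\mathrm{odd}}1/n=0$ and $\sum_{j}1/(2j-1)^2=\pi^2/4$, with the diagonal term $l=2m+1$ supplying the factor $\pi^2/4$), and conclude by applying Shannon--Whittaker to $V-f$. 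Your reduction to finitely supported $a$ (hence to basis vectors $\delta_l$) by continuity of both sides in $\ell^2$ is the right way to make the interchanges of summation painless, and your identification $b_{2k}=f'(2k)$ is exactly the paper's starting point as well. What each approach buys: yours is shorter and avoids special functions entirely; the paper's derivation is constructive (it produces the right-hand side rather than verifying a guessed one) and, more importantly, sets up the perturbed discrete Hilbert transform machinery that is reused immediately afterwards in the proof of Theorem \ref{thm vaaler perturb}. The only places where your write-up would need fleshing out are routine: the $L^2$-boundedness of $\{c_k\}\mapsto\sum_k c_k g(\cdot-2k)$ for $g$ with bounded, compactly supported Fourier transform (to place $V$ in $PW_\pi$), and the justification that splitting the absolutely convergent sum $\sum_k 1/((2k-l)(2m+1-2k))$ into two principal-value sums is legitimate because each piece converges symmetrically.
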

As a main difference between our proof of Theorem \ref{thm shannon-to-vaaler} and the original proof in \cite{Vaaler} is the absence of any significant use of the Fourier transform. Differently, however, from 
the de Branges spaces approach in \cite{Goncalves}, we do not delve deeply into any theory of function spaces, but rather we make use of classical operators in $\ell^2(\Z)$ such as discrete Hilbert transforms and 
its properties. 

\medskip

Our final contribution in the realm of interpolation formulae for band-limited function is an appropriate perturbation of Vaaler's formula \eqref{eq vaaler interpol}. We mention that, to the best of our knowledge, this 
result in its present form is new. See, for instance, the remark following Corollary 2 in \cite{Goncalves} together with \cite{LyuSeip, SeipOC} for related discussion on sampling sequences with derivatives for $PW_{\pi}.$ 

\begin{theorem}\label{thm vaaler perturb} Let $\{\varepsilon_k\}_{k\in\Z}$ be a sequence of real numbers and consider $L=\sup_k|\varepsilon_k|$. Suppose that $L < 0.111.$ Then any function $f \in PW_{2\pi}$ is completely determined by its values $\{f(n+\eps_n)\}_{n \in \Z}$ and those of its 
derivative $\{f'(n + \eps_n)\}_{n \in \Z},$ and there is $C=C(L)>0$ such that
\begin{align*}
    \frac{1}{C}\sum_{n\in\Z}\left(|f(n+\varepsilon_n)|^2 + |f'(n+\eps_n)|^2\right)
    \leq\|f\|_2^2\leq    {C}\sum_{n\in\Z}|\left(|f(n+\varepsilon_n)|^2 + |f'(n+\eps_n)|^2\right),
\end{align*}
for all $f \in PW_{2\pi}.$ 

Moreover, there are functions $g_n, h_n \in PW_{2\pi}$ so that, for all $f \in PW_{2\pi}$, we have 
\[
f(x) = \sum_{n \in \Z} \left\{f(n+\eps_n) g_n(x) +  f'(n + \eps_n) h_n(x)\right\},
\]
where convergence holds absolutely. 
\end{theorem}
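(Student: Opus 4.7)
The plan is to mimic the functional-analytic strategy used for Theorem \ref{th:perturbed_PW}, but now with two channels of data per node (value and derivative), leveraging Vaaler's formula in place of Shannon--Whittaker. Consider the two sampling operators
\begin{equation*}
S,\, S_\eps : PW_{2\pi} \longrightarrow \ell^2(\Z)\oplus \ell^2(\Z),\qquad
S(f)=\bigl(\{f(k)\}_k,\{f'(k)\}_k\bigr),\quad S_\eps(f)=\bigl(\{f(k+\eps_k)\}_k,\{f'(k+\eps_k)\}_k\bigr),
\end{equation*}
where we equip $\ell^2\oplus\ell^2$ with the weighted norm $\|(a,b)\|^2=\|a\|_{\ell^2}^2+\pi^{-2}\|b\|_{\ell^2}^2$, under which Vaaler's Plancherel-type identity makes $S$ an isometric isomorphism onto its (closed) range. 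It suffices to prove that the transfer operator $T=S_\eps\circ S^{-1}$ satisfies $\|T-I\|<1$ on $S(PW_{2\pi})$, for then $S_\eps=T\circ S$ inherits bi-Lipschitz bounds, the frame inequality and uniqueness follow, and the dual formula $f=\sum_n\{f(n+\eps_n)g_n+f'(n+\eps_n)h_n\}$ is read off by expanding $f=S^{-1}T^{-1}S_\eps(f)$.

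Next, I would make $T$ fully explicit. Given $(a,b)=S(f)$, inserting $x=n+\eps_n$ into \eqref{eq vaaler interpol} and into its termwise derivative
\begin{equation*}
f'(x)=\frac{2\sin(\pi x)\cos(\pi x)}{\pi}\sum_{k\in\Z}\left\{\frac{a_k}{(x-k)^2}+\frac{b_k}{x-k}\right\}+\frac{\sin^2(\pi x)}{\pi^2}\sum_{k\in\Z}\left\{-\frac{2a_k}{(x-k)^3}-\frac{b_k}{(x-k)^2}\right\},
\end{equation*}
and separating the diagonal contribution ($k=n$) from the off-diagonal one, yields $T$ as a $2\times2$ block matrix acting on $\ell^2\oplus\ell^2$. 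The diagonal piece $D$ multiplies $(a_n,b_n)$ by an explicit $2\times 2$ matrix whose entries are polynomials in $\sin(\pi\eps_n),\cos(\pi\eps_n),\eps_n^{-1},\eps_n^{-2},\eps_n^{-3}$; a Taylor expansion shows $\|D-I\|_{2\to 2}=O(\eps_n^2)$ uniformly for $|\eps_n|\le L$, with explicit constants.

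For the off-diagonal part I would apply a Schur-type test, controlling the four blocks by the convergent sums $\sum_{j\neq 0}(|j|-L)^{-s}$ for $s=2,3$, compared against $\sum_{j\neq 0}j^{-2}=\pi^2/3$ and $\sum_{j\neq 0}|j|^{-3}=2\zeta(3)$. After assembling the four block norms with the factor $\sin^2(\pi\eps_n)/\pi^2$ appearing in front of every off-diagonal term, one obtains an explicit upper bound for $\|T-I\|$ as a monotone function of $L$, whose root is numerically above $0.111$; the Neumann series then produces $T^{-1}$ and the desired frame bounds and dual functions.

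The main obstacle is the cubic-decay term $(x-k)^{-3}$ coming from differentiating $\sin^2(\pi x)/(x-k)^2$: it forces sharper node-wise control than in Theorem \ref{th:perturbed_PW} and couples the two channels $a\leftrightarrow b$ through non-trivial off-diagonal blocks. Balancing the diagonal $2\times 2$ perturbation with the Schur bound for these cross terms is what drives the constraint $L<0.111$, and obtaining it requires a careful bookkeeping of each of the four blocks of $T-I$ separately rather than a single crude estimate. As emphasized after Theorem \ref{th:perturbed_PW}, we deliberately avoid orthogonality-based refinements, which could presumably improve the numerical threshold at the cost of a considerably longer argument.
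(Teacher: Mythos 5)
Your overall strategy --- realize the perturbed sampling map as $\mathcal{A}_\eps = I + \mathcal{P}_\eps$ on $\ell^2(\Z)\times\ell^2(\Z)$ via Vaaler's formula and its termwise derivative, split $\mathcal{P}_\eps$ into a diagonal $2\times 2$ block and off-diagonal Hilbert-transform-type pieces, and invert by a Neumann series --- is exactly the paper's. But there is a genuine gap in your treatment of the off-diagonal part. Besides the kernels of order $2$ and $3$, the operator $\mathcal{P}_\eps$ contains \emph{order-one} off-diagonal kernels: the term $\frac{\sin^2(\pi x)}{\pi^2}\sum_k \frac{b_k}{x-k}$ of Vaaler's formula evaluated at $x=n+\eps_n$ produces $\frac{\sin^2(\pi\eps_n)}{\pi^2}\sum_{k\neq n}\frac{b_k}{n+\eps_n-k}$, and your own displayed formula for $f'$ contains $\frac{2\sin(\pi x)\cos(\pi x)}{\pi}\sum_k\frac{b_k}{x-k}$, again an order-one kernel at $x=n+\eps_n$ (now with prefactor only $O(\eps_n)$). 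An absolute-value Schur test of the kind you describe (``controlling the four blocks by $\sum_{j\neq 0}(|j|-L)^{-s}$ for $s=2,3$'') cannot handle these: $\sum_{j\neq 0}(|j|-L)^{-1}$ diverges, and in fact the positive-kernel operator $a\mapsto \sum_{k\neq n}|a_k|/|n-k|$ is unbounded on $\ell^2(\Z)$, so no choice of Schur weights rescues it. The paper resolves this by keeping the sign of the kernel: it uses the exact multiplier norms $\|\H^1_0\|=\pi$, $\|\H^2_0\|=\pi^2/3$, $\|\H^3_0\|=\pi^3/(9\sqrt{3})$ of the unperturbed operators $\sum_{k\neq n}a_k/(n-k)^p$ (Littmann's Bernoulli-polynomial computation), and only estimates the \emph{difference} $\H^p_0-\H^p_\eps$ --- whose kernel gains an extra order of decay --- by absolute values against $\|\mathcal{S}^{p+1}\|=2\zeta(p+1)$. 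Without this cancellation input your bound on $\|\mathcal{P}_\eps\|$ does not close.

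Two smaller points. First, $S$ is not an isometry onto its range for the weighted norm $\|a\|^2+\pi^{-2}\|b\|^2$: for $f=\sinc^2\in PW_{2\pi}$ one has $\|f\|_2^2=2/3$ while $\sum_n|f(n)|^2+\pi^{-2}\sum_n|f'(n)|^2=1$. Only the (true) weaker statement that $S$ is an isomorphism onto a closed subspace is needed, which the paper extracts from Vaaler's formula together with \cite[Theorem~1.13]{Young}. Second, the diagonal block is $I+O(\eps_n)$, not $I+O(\eps_n^2)$: its off-diagonal entries $h(\eps_n)=\sin^2(\pi\eps_n)/(\pi^2\eps_n)$ and $g'(\eps_n)$ are genuinely first order in $\eps_n$, so the claimed quadratic smallness of $D-I$ --- and with it your assertion that the numerical threshold lands above $0.111$ --- is unsupported as stated.
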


This result and its method of proof follow, essentially, the same basic ideas from Theorem \ref{th:perturbed_PW} and its proof, with only an increase in technical difficulties, such as considering higher order analogues of the 
perturbed discrete Hilbert transforms we use for the proof of \ref{th:perturbed_PW}. We note also that these technical changes, together with the work of Littman \cite{Lit02}, allow one to extend the perturbation results for arbitrarily 
many derivatives; see Theorem \ref{thm derivatives bl} for a discussion on that. In order to avoid the not so pleasant computations needed in order to prove such a result, and due to the fact that its proof follows the main ideas of the proofs of 
theorems \ref{thm vaaler perturb} and \ref{th:perturbed_PW}, we omit it. 

\subsection{Perturbations of symmetric interpolation formulae} Moving on from band-limited functions to Schwartz functions instead, we face the fundamental question of determining whether formula \eqref{eq:interpolation_schwartz} 
is rigid for its interpolation nodes or not. In other words, a fundamental question concerns conditions when we can replace \emph{a single} interpolation node $\sqrt{k}$ by a suitable perturbation of it, say $\sqrt{k+\eps_k},$ where $\eps_k \in (-1,1).$ 

Perhaps surprisingly, the idea of inverting an operator $T$ when it is reasonably close to the identity still works in this context. The next result can thus be regarded as the main new feature of this paper, establishing criteria when we are allowed, not only 
to perturb one node in the interpolation formula, but all of them \emph{simultaneously}. 

\begin{theorem}\label{thm mainthm} There is $\delta>0$ so that, for each sequence of real numbers $\{\eps_k\}_{k \ge 0}$ such that $\eps_k \in (-1/2,1/2), \eps_0 = 0, \, \sup_{k \ge 0} |\eps_k| (1+k)^{5/4} < \delta \, \forall \, k \ge 0,$ 
there are sequences of functions $\{\theta_j\}_{j \ge 0},\{\eta_j\}_{j \ge 0}$ with 
\[
|\theta_j(x)| + |\eta_j(x)| + |\widehat{\theta}_j(x)| + |\widehat{\eta}_j(x)| \lesssim (1+j)^{\mathcal{O}(1)} (1+|x|)^{-10}
\]
and
\[
f(x) = \sum_{j \ge 0} \left( f(\sqrt{j+\eps_j}) \theta_j(x) + \widehat{f}(\sqrt{j+\eps_j}) \eta_j(x)\right),
\]
for all $f \in \mathcal{S}_{even}(\R)$ real-valued functions.
\end{theorem}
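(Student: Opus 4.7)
My plan is to execute a Neumann-series perturbation argument on a weighted sequence space, following the philosophy emphasized throughout the introduction. Let $\{a_k\}_{k \geq 0}$ be the Radchenko--Viazovska functions from \eqref{eq:interpolation_schwartz}, so $a_k(\sqrt{j}) = \delta_{jk}$ and $\widehat{a_k}(\sqrt{j}) = 0$. Fix a weighted Banach space $X$ of pairs of sequences $\{(\alpha_k,\beta_k)\}_{k\geq 0}$ with a polynomial weight $(1+k)^s$ for a suitable $s>0$, chosen so that (i) the data map $\Phi_0 : f \mapsto \{(f(\sqrt{k}), \widehat{f}(\sqrt{k}))\}_k$ takes $\mathcal{S}_{\mathrm{even}}(\R)$ continuously into $X$, and (ii) the reconstruction
\[
\Psi(\alpha,\beta)(x) = \sum_{k \geq 0} \alpha_k\, a_k(x) + \sum_{k \geq 0} \beta_k\, \widehat{a_k}(x)
\]
is continuous from $X$ into a function space on which pointwise evaluation at each $\sqrt{k+\eps_k}$, and at each $\sqrt{k+\eps_k}$ after taking the Fourier transform, is a bounded functional; the pointwise decay of $a_k,\widehat{a_k}$ is governed by Theorem \ref{eq improvement decay}.

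Next, introduce the perturbation operator
\[
T_\eps : X \to X,\qquad T_\eps(\alpha,\beta) = \bigl\{\bigl(\Psi(\alpha,\beta)(\sqrt{k+\eps_k}),\ \widehat{\Psi(\alpha,\beta)}(\sqrt{k+\eps_k})\bigr)\bigr\}_{k \geq 0}.
\]
By the Radchenko--Viazovska interpolation property, $T_0 = \mathrm{Id}$. The core of the argument is to show that $\|T_\eps - \mathrm{Id}\|_{X \to X} < 1$ provided $\sup_k |\eps_k|(1+k)^{5/4} < \delta$ is small enough. The matrix entries of $T_\eps - \mathrm{Id}$ are
\[
a_k(\sqrt{j+\eps_j}) - a_k(\sqrt{j}) \qquad \text{and} \qquad \widehat{a_k}(\sqrt{j+\eps_j}) - \widehat{a_k}(\sqrt{j}),
\]
which by the mean value theorem are bounded by $\sup_{\xi} |a_k'(\xi)| \cdot |\eps_j|/(1+\sqrt{j})$, with $\xi$ ranging over a small interval around $\sqrt{j}$ (the hypothesis $\eps_0 = 0$ kills the $j=0$ row, which is essential because evaluating at $\sqrt{\eps_0}$ for $\eps_0 \neq 0$ would collide with evenness). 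A Schur-test style double summation then reduces the operator-norm bound to the simultaneous control of $\sum_k (1+k)^s \sup_{\xi \approx \sqrt{j}} |a_k'(\xi)|$ against the weight $(1+j)^{-5/4}$ supplied by the hypothesis on $\eps_j$.

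Once $T_\eps$ is invertible, applying $T_\eps^{-1}$ to the measured data $\{(f(\sqrt{k+\eps_k}), \widehat{f}(\sqrt{k+\eps_k}))\}_k$ recovers the unperturbed data $\{(f(\sqrt{k}), \widehat{f}(\sqrt{k}))\}_k$, which the RV formula then turns back into $f$. Reading off the coefficients of this composition exhibits $\theta_j$ and $\eta_j$ explicitly as linear combinations of $\{a_k\}, \{\widehat{a_k}\}$ weighted by the rows of $T_\eps^{-1}$; the asserted decay
\[
|\theta_j(x)| + |\eta_j(x)| + |\widehat{\theta}_j(x)| + |\widehat{\eta}_j(x)| \lesssim (1+j)^{\mathcal{O}(1)} (1+|x|)^{-10}
\]
then follows from the pointwise bounds on $a_k,\widehat{a_k}$ of Theorem \ref{eq improvement decay} combined with summability of each row of $T_\eps^{-1}$ in the weighted norm.

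I expect the main obstacle to lie in the operator-norm step: securing $j,k$-uniform derivative estimates on the RV functions $a_k$ near the perturbed nodes $\sqrt{j+\eps_j}$ with sufficient off-diagonal decay to beat the growth built into $X$. The exponent $5/4$ in the hypothesis is not a coincidence but precisely the balance between the weight $(1+k)^s$ on the input side and the decay rate of $a_k'$ at $\sqrt{j}$ on the output side; everything else, namely the setup of $X$, the invocation of the Neumann series, and the final unwinding into the explicit interpolation formula, is routine once this quantitative estimate is secured.
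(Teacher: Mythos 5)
Your overall architecture coincides with the paper's: the same weighted spaces $\ell^2_s(\N)\times\ell^2_s(\N)$, the same perturbation operator built from evaluating $\Psi(\alpha,\beta)=\sum_k(\alpha_k a_k+\beta_k\widehat{a_k})$ at the perturbed nodes, the same mean-value-theorem reduction to the derivative bounds of Theorem \ref{eq improvement decay}, the same Schur test with power weights, and the same unwinding of $T_\eps^{-1}$ into the functions $\theta_j,\eta_j$. However, there is one genuine gap, and it sits exactly at the point you dismiss with an incorrect justification: the claim that $T_0=\mathrm{Id}$ on all of $X$ is false. The Radchenko--Viazovska property $a_k(\sqrt{j})=\delta_{jk}$, $\widehat{a_k}(\sqrt{j})=0$ holds only for $j\ge 1$; at $j=0$ one instead has
\[
\Psi(\alpha,\beta)(0)=\frac{\alpha_0+\beta_0}{2}-\sum_{n\ge1}\alpha_{n^2}+\sum_{n\ge1}\beta_{n^2},
\]
and similarly for $\widehat{\Psi}(\alpha,\beta)(0)$. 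Hence $T_0$ equals the identity only on the codimension-one subspace of pairs satisfying the Poisson summation relation $\sum_n \alpha_{n^2}=\sum_n\beta_{n^2}$; on the full space $T_0$ has one-dimensional kernel (spanned by $((1,0,0,\dots),(-1,0,0,\dots))$) and one-dimensional cokernel, so it is not invertible, and $T_0-\mathrm{Id}$ is not small in operator norm. A Neumann series around your $T_0$ therefore cannot start. Your stated reason for needing $\eps_0=0$ (``evaluating at $\sqrt{\eps_0}$ would collide with evenness'') is not the actual obstruction; evaluating an even Schwartz function at a small positive point is harmless.

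The repair, which is what the paper does, is to \emph{redefine} the zeroth coordinate of the perturbed operator to be literally $(\alpha_0,\beta_0)$ rather than $(\Psi(\alpha,\beta)(\sqrt{0+\eps_0}),\widehat{\Psi}(\alpha,\beta)(\sqrt{0+\eps_0}))$. This modified operator $\tilde T$ genuinely satisfies $\tilde T-\mathrm{Id}\to 0$ as the $\eps_k$ shrink, and it still agrees with ``evaluate $f$ and $\widehat f$ at the perturbed nodes'' when fed the data of an actual function $f$, because such data do satisfy Poisson summation and because $\eps_0=0$ makes the zeroth perturbed node equal to $0$. This is precisely where the hypothesis $\eps_0=0$ enters: without it, one cannot make this substitution, $\tilde T$ is only a compact perturbation of a Fredholm operator of index zero with one-dimensional kernel, and one obtains only the dichotomy of Theorem \ref{thm alternative} rather than invertibility. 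With this single correction your argument goes through and matches the paper's proof; the remaining steps (the Schur test balancing $j^{3/4}e^{-c\sqrt{i/j}}$ against the weight $i^{-5/4}$, and the extraction of $\theta_j,\eta_j$ with the stated decay) are carried out in the paper exactly as you outline.
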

In other words, we can perturb each interpolation node from $\sqrt{k}$ to $\sim \sqrt{k+k^{-5/4}}$ and still obtain a valid interpolation formula converging for all Schwartz functions. In fact, one does not striclty need that $f \in \mathcal{S}(\R),$ but
only that $f, \widehat{f}$ decay at least as fast as $(1+|x|)^{-M}$ for some sufficiently large $M \gg 1.$ 

As an immediate corollary of Theorem \ref{thm mainthm}, we obtain that the continuous family of measures
\[
\mu_x = \frac{\delta_x + \delta_{-x}}{2} - \sum_{j \ge 0} \frac{\theta_j(x)}{2} \, \delta_{\pm\sqrt{j+\eps_j}} 
\]
possesses Fourier transform given by 
\[
\widehat{\mu_x} = \sum_{j \ge 0} \frac{\eta_j(x)}{2} \delta_{\pm \sqrt{j+\eps_j}},
\]
whenever $\{\eps_i\}_{i \ge 0}$ satisfies the hypotheses of Theorem \ref{thm mainthm}. This follows from the fact that $\mu_x$ is even and real-valued, so that its distributional Fourier transform will also be an even and real-valued distribution. Therefore, it sufficies to test against even, real-valued functions $f,$ and thus Theorem \ref{thm mainthm} 
gives us the asserted equality. This provides one with a new class of nontrivial examples of \emph{crystalline measures} supported on both space and frquency on basically any set of the form $\pm \sqrt{k + \eps_k}, \, |\eps_k| \le \delta k^{-5/4}.$ This, in particular, aligns well with the recent 
examples from \cite{BRS} and \cite{KurasovSarnak}, which indicate that crystalline measures are, if not impossible, very hard to classify. 

\medskip

In order to prove Theorem \ref{thm mainthm}, we need to find a suitable space to use the idea of inverting operators close to the identity. It turns out that, in analogy to Sobolev spaces, the weighted spaces 
$\ell^2_s(\N)$ of sequences square summable against $n^s$ are natural candidates to work with, as it is well suited to accommodate the sequence 
$$\{(f(\sqrt{k+\eps_k}),\widehat{f}(\sqrt{k+\eps_k}))\}_{k \ge 0}$$
whenever $f, \widehat{f}$ decay sufficiently fast. In order to prove \emph{some} perturbation result -- that is, a weaker version of Theorem \ref{thm mainthm} --, 
using the spaces $\ell^2_s(\N)$ together with the polynomial growth bounds on $\{a_n\}_{n\ge 0}$ from \eqref{eq:interpolation_schwartz} is already enough. 

On the other hand, the fact that me may push the perturbations up until the $k^{-5/4}$ threshold needs a suitable refinement to the Radchenko--Viazovska \cite{RV} or even to the Bondarenko--Radchenko--Seip \cite{BRS} bounds. The next result, 
thus, provides us with an additional \emph{exponential factor} that mitigates growth of the interpolating functions. 

\begin{theorem}\label{eq improvement decay} Let $b_n^{\pm} = a_n \pm \widehat{a_n},$ where $\{a_n\}_{n\ge 0}$ are the basis functions in \eqref{eq:interpolation_schwartz}. Then there is an absolute constant $c>0$ such that 
\begin{align*}
|b_n^{\pm}(x)| & \lesssim n^{1/4} \log^{3/2}(1+n)e^{-c\frac{|x|}{\sqrt{n}}}, \cr 
|(b_n^{\pm})'(x)| & \lesssim n^{3/4} \log^{3/2}(1+n) e^{-c\frac{|x|}{\sqrt{n}}}, \cr 
\end{align*}
for all positive integers $n \in \N.$ 
\end{theorem}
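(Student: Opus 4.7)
The starting point is the generating-function construction of \cite{RV}: the basis functions $b_n^\pm = a_n \pm \widehat{a_n}$ appear as Fourier coefficients
\[
F_\pm(\tau, x) \,=\, \sum_{n \ge 0} b_n^\pm(x)\, e^{\pi i n \tau},\qquad \tau \in \HH,
\]
of generating functions that admit explicit closed-form expressions in terms of weight-$3/2$ modular forms on the theta group multiplied by the Gaussian factor $e^{\pi i \tau x^2}$ encoding the $x$-dependence. By Fourier inversion along a horizontal contour at any height $t>0$,
\[
b_n^\pm(x) \,=\, \frac{1}{2}\int_{-1+it}^{1+it} F_\pm(\tau, x)\, e^{-\pi i n \tau}\, d\tau.
\]
The plan is to combine sharp pointwise bounds on $F_\pm$ along horizontal contours with a careful optimization of $t$, exploiting the modular transformation $F_\pm(-1/\tau, x) \sim \tau^{1/2} F_\pm(\tau, x)$ (reflecting the Fourier self-duality built into the interpolation) to transfer bounds between the two cusps.

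The first step is to establish uniform pointwise bounds for $F_\pm(\sigma + it, x)$ in $\sigma \in [-1,1]$ and $t>0$. The dominant behaviour near the cusp $i\infty$ is set by the $e^{\pi i \tau x^2}$ factor, giving $|F_\pm| \lesssim e^{-\pi t x^2}$ up to modular-form corrections, and the modular transformation translates this into a dual Gaussian-in-$x$ decay near the cusp $0$. The modular-form prefactors are controlled by Deligne's bound on Fourier coefficients of weight-$3/2$ cusp forms---precisely the source of the $n^{1/4}$ exponent---together with explicit estimates on the weight-$3/2$ Eisenstein component, producing a polynomial-in-$t^{-1}$ bound with a $\log^{3/2}$ refinement coming from the number of cusps and a divisor-type sum. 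The second step is the contour optimization: the choice $t \sim 1/\sqrt{n}$ balances the growth $e^{\pi n t}$ of $e^{-\pi i n \tau}$ against the modular factor and yields the polynomial $n^{1/4}\log^{3/2}(1+n)$, while a further deformation of the contour running closer to the cusp $0$ exploits the dual Gaussian-in-$x$ factor there to extract the exponential $e^{-c|x|/\sqrt n}$, which becomes nontrivial precisely in the regime $|x| \gtrsim \sqrt n$.

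For the derivative bound, one differentiates under the integral: since $\partial_x F_\pm = (2\pi i \tau x) F_\pm + (\text{lower-order modular correction})$, at the optimal contour height the extra factor $|\tau x|$ contributes an additional $\sqrt n$, turning $n^{1/4}$ into $n^{3/4}$ while preserving the exponential gain. The main technical obstacle is the combined contour analysis that produces precisely the rate $e^{-c|x|/\sqrt n}$: the naive $i\infty$-cusp bound yields only a coarser estimate, and only a delicate interplay between the Gaussian-in-$x$ factor inherited from the $0$ cusp, the polynomial growth of the modular prefactors at both cusps, and the factor $e^{\pi n t}$ along the optimized contour produces the claimed rate. This refinement is exactly what the weighted-$\ell^2$ perturbation scheme of Theorem \ref{thm mainthm} needs in order to push the admissible perturbations up to the $k^{-5/4}$ threshold.
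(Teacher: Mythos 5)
Your starting point (the generating functions $F_\pm(\tau,x)=\sum_n b_n^\pm(x)e^{\pi i n\tau}$ and extraction of $b_n^\pm$ as Fourier coefficients along a horizontal contour at height $\sim 1/n$) matches the paper, but the core of your plan has genuine gaps. First, $F_\pm$ does \emph{not} satisfy a homogeneous transformation law $F_\pm(-1/\tau,x)\sim \tau^{1/2}F_\pm(\tau,x)$: the actual relation is the inhomogeneous cocycle identity $F_{\pm}(\tau,x) \pm (-i\tau)^{-1/2}F_{\pm}(-1/\tau,x) = e^{i\pi\tau x^2} \pm (-i\tau)^{-1/2}e^{-i\pi x^2/\tau}$, so the Gaussian-in-$x$ information lives only in the period function, not as a prefactor of $F_\pm$. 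Consequently your proposed ``deformation of the contour toward the cusp $0$ to extract a dual Gaussian'' does not go through as stated: reducing a point near the real line to the fundamental domain requires iterating the cocycle $m(\tau)$ times, and one must track the Gaussians $e^{i\pi\tau_j x^2}$ at every intermediate point $\tau_j$ of the orbit, many of which have small imaginary part and give no useful decay in $x$. Second, the $n^{1/4}\log^{3/2}(1+n)$ factor does not come from Deligne-type bounds on coefficients of weight-$3/2$ cusp forms; it comes from the Bondarenko--Radchenko--Seip averages $\int_{-1}^1 I(x+iy)^{1/4}\,\mmd x\lesssim 1$ and $\int_{-1}^1 m(x+iy)^{3/4}\,\mmd x\lesssim\log^{3/2}(1+y^{-1})$, where $m(\tau)$ counts the number of cocycle steps and $I(\tau)$ is the height of the reduced point. (Also, the correct contour height is $t\sim 1/n$, not $1/\sqrt n$; at height $1/\sqrt n$ the factor $e^{\pi nt}$ is catastrophic.)

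The decisive missing idea is the mechanism that actually produces $e^{-c|x|/\sqrt n}$. The paper does not get it from any single contour: it bounds \emph{all weighted moments} at once, proving $|x^kF_\pm(\tau,x)|\le C^k k!\,\mathrm{Im}(\tau)^{-k/2-1/4}(I(\tau)^{1/4}+m(\tau)^{3/4})$ with explicit constants (the $n^{k/2}$ and $k!$ come from optimizing $x^k e^{-\pi x^2\mathrm{Im}(z)}$ at $x=(k/2\pi\mathrm{Im}(z))^{1/2}$ and from Gamma-function integrals), which via the Hecke coefficient lemma yields $\sup_x|x^k b_n^\pm(x)|\lesssim C^k k!\, n^{k/2}\,n^{1/4}\log^{3/2}(1+n)$, uniformly in $k$. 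One then optimizes in $k$ (taking $k\sim |x|/\sqrt n$), in the spirit of the Gelfand--Shilov characterization and using the self-duality $\widehat{b_n^\pm}=\pm b_n^\pm$, to convert the factorial moment bounds into the exponential decay. Without this moment/optimization step (or a genuine substitute), your outline establishes at best the polynomial bound $n^{1/4}\log^{3/2}(1+n)$ with no decay in $x$. The derivative case then follows the same scheme applied to $\partial_xF_\pm^k$, whose cocycle has period functions $\sim x^{k+1}\tau e^{i\pi\tau x^2}$, giving the extra factor $\sqrt n$; your heuristic for the $n^{3/4}$ is directionally right but again needs the moment machinery to produce the exponential factor.
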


The proof of such a result employs a mixture of the main ideas for the uniform bounds in \cite{RV} and \cite{BRS}, with the addition of an explicit computation of the best uniform constant bounding $|x|^k \left|b_n^{\pm}(x) + (b_n^{\pm})'(x)\right|$ 
in terms of $k$ and $n.$ In order to obtain such a constant, we employ ideas from characterizations of Gelfand--Shilov spaces, as in \cite{CCK}. Finally, with a modification of the growth lemma for 
Fourier coefficients of $2-$periodic functions, we are able to obtain a slight improvement over the growth stated in Theorem \ref{eq improvement decay}. As, however, this modification does not yield any improvement 
on the perturbation range stated in Theorem \ref{thm mainthm}, we postpone a more detailed discussion about it to Corollary \ref{thm bound best} below. 

\medskip 

\subsection{Applications} As a by-product of our method of proof for Theorem \ref{thm mainthm}, we are able to deduce some interesting consequences in regard to some other interpolation formulae and uniqueness results. 

Indeed, it is a not so difficult task to adapt the ideas employed before to the contexts of interpolation formulae for \emph{odd} functions. As remarked by Radchenko and Viazovska, the following interpolation formula is available whenever $f: \R \to \R$ 
is odd and belongs to the Schwartz class: 
\[
f(x) = d_0^+(x) \frac{f'(0) + i \widehat{f}'(0)}{2} + \sum_{n \ge 1} \left( c_n(x) \frac{f(\sqrt{n})}{\sqrt{n}} - \widehat{c_n}(x) \frac{\widehat{f}(\sqrt{n})}{\sqrt{n}} \right),
\]
where the interpolating sequence $\{c_i\}_{i \ge 0}$ possesses analogous properties to those of $\{a_i\}_{i \ge 0},$ and the function $d_0^+(x) = \frac{\sin(\pi x^2)}{\sinh(\pi x)}$ is odd, real and so that 
it vanishes together with its Fourier transform at $\pm \sqrt{n}, \, n \ge 0.$ 

With our techniques, we are able to prove an analogous result to Theorems \ref{eq improvement decay} and \ref{thm mainthm} for the odd interpolation formula. Also, with our techniques, we are able to perturb the Cohn--Kumar--Miller--Radchenko--Viazovska interpolation
results with derivatives in dimensions $8$ and $24$ in a suitable range, as polynomial growth bounds for such interpolating functions are available in \cite{CKMRV2}; see theorems \ref{thm perturb derivatives} and 
\ref{thm perturb odd} for more details. 

Another interesting application of our techniques delves a little deeper into functional analysis techniques. Indeed, in order to prove that the operator that takes the set of values $\{f(\sqrt{k})\}_{k \ge 0},\, \{\widehat{f}(\sqrt{k})\}_{k \ge 0}$ 
to the sequences 
$$\{f(\sqrt{k+\eps_k})\}_{k \ge 0}, \, \{\widehat{f}(\sqrt{k+\eps_k})\}_{k \ge 0}$$
is bounded and close to the identity on a suitable $\ell^2_s(\N) \times \ell^2_s(\N)$ space, we explore two main options, which are \emph{Schur's test} and
the \emph{Hilbert--Schmidt test}. Although there is no direct relation between them, Schur's test seems to hold, in generic terms, for more operators than the Hilbert--Schmidt test, and for that reason we employ the former in our proof of Theorem \ref{thm mainthm}. On the other
hand, the Hilbert--Schmidt test has the advantage that, whenever an operator is bounded in the Hilbert--Schmidt norm, it is automatically a \emph{compact} operator. This allows us to use many more tools derived from the theory 
of Fredholm operators, and, in particular, deduce a sort of interpolation/uniqueness result in case $\eps_0 \neq 0,$ which is excluded by Theorem \ref{thm mainthm} above; see Theorem \ref{thm alternative} below for such an application.  

\medskip 

The perhaps most interesting and nontrivial application of Theorem \ref{thm mainthm} and its techniques is to the problem of \emph{Fourier uniqueness for powers of integers}. In \cite{RS1}, we have proven a preliminary result on conditions on $(\alpha,\beta),$ 
$0 < \alpha,\beta, \, \alpha+\beta<1,$ so that the only $f \in \mathcal{S}(\R)$ such that 
\[
f(\pm n^{\alpha}) = \widehat{f}(\pm n^{\beta}) = 0
\]
is $f \equiv 0.$ In particular, we prove that, if $\alpha = \beta,$ then we can take $\alpha < 1 - \frac{\sqrt{2}}{2}.$ 

By an approximation argument, a careful analysis involving Laplace transforms and the perturbation techniques and results above, we are able to reprove such a result for $\alpha = \beta$ in the $\alpha < \frac{2}{9}$ range in case 
$f$ is real and even by a completely different method than that in \cite{RS1}. Although the current method does not yield any improvement over \cite[Theorem~1]{RS1}, we believe it is a promising path towards proving that the wished uniqueness result 
holds in the $0 < \alpha, \beta < \frac{1}{2}$ range. We refer the reader to Corollary \ref{corol diagonal} below and the discussion that succeeds it for more precise statements. 

\subsection{Organization} We comment briefly on the overall display of our results throughout the text. In Section \ref{sec prelim} below, we discuss generalities on background results needed for the proofs of the main Theorems, going over results in the 
theory of band-limited functions, modular forms and functional analysis. Next, in Section \ref{sec BL}, we prove, in this order, theorems \ref{th:perturbed_PW}, \ref{thm shannon-to-vaaler} and \ref{thm vaaler perturb} about band-limited 
perturbed interpolation formulae. We then prove, in Section \ref{perturbed radchenko viazovska}, Theorem \ref{thm mainthm}, by first discussing the proof Theorem \ref{eq improvement decay} in \S \ref{sec improve}. We then discuss the applications of our main results and techniques 
in Section \ref{sec applications}, and finish the manuscript with Section \ref{sec final}, talking about some possible refinements and open problems that arise from our discussion throughout the paper. 

\section{Preliminaries}\label{sec prelim}

\subsection{Band-limited functions} We start by recalling some basic facts about band-limited functions. Given a function $f \in L^2(\R),$ we say that it is \emph{band-limited} if its Fourier transform satisfies that 
$\supp(\widehat{f}) \subset [-M,M]$ for some $M>0.$ In this case, we say that $f$ is \emph{band-limited to $[-M,M].$}

It is a classical result due to Paley and Wiener that a function $f \in L^2(\R)$ is band-limited if and only if it is the restriction of an entire function $F: \C \to \C$ 
to the real axis, and the function $F$ is of exponential type; that is, there exists $\sigma > 0$ so that, for each $\eps > 0,$ 
\[
|F(z)| \le C_{\eps} e^{(\sigma + \eps)|z|},
\]
for all $z \in \C.$ From now on we will abuse notation and let $F = f$ whenever there is no danger of confusion, and we may also write $f \in PW_{\sigma}$ (Paley--Wiener space) to denote the 
space of functions with such properties. 

Besides this fact, we will make use of some interpolation formulae for those functions. Namely, 
\begin{enumerate}
 \item \textit{Shannon--Whittaker interpolation formula.} For each $f \in L^2(\R)$ band-limited to $[-\frac{1}{2},\frac{1}{2}],$ the following formula holds: 
 \[
 f(x) = \sum_{n \in \Z} f(n) \sinc(x-n),
 \]
 where $\sinc(x) = \frac{\sin(\pi x)}{\pi x}$ and the sum above converges both in $L^2(\R)$ and uniformly on compact sets of $\C.$ 
 \item \textit{Vaaler interpolation formula.} For each $f \in L^2(\R)$ band-limited to $[-1,1],$ the following formula holds: 
 \[
 f(x) = \left(\frac{\sin \pi x}{\pi}\right)^2 \sum_{n \in \Z} \left[ \frac{f(n)}{(x-n)^2} - \frac{f'(n)}{x-n}\right],
 \]
 where the right-hand side converges both in $L^2(\R)$ and uniformly on compact sets of $\C.$ 
\end{enumerate}
For more details on these classical results, see, for instance, \cite{Vaaler}, \cite{Lit02},\cite{PW},\cite{Shannon} and \cite{Whittaker}. 

\subsection{Modular forms} In order to prove the improved estimates on the interpolation basis for the Radchenko--Viazovska interpolation result, we will need to make careful computations 
involving certain modular forms defining the interpolating functions. For that purpose, we gather some of the facts we will need in this subsection. 

We denote by $\HH = \{ z \in \C \colon \text{Im}(z) > 0\}$  the upper half plane in $\C.$ The special feature of this space is that the group $SL_2(\R)$ of matrices with real coefficients and determinant 1 acts naturally on it through M\"obius transformations: for 
\[
\gamma = \begin{pmatrix}
         a & b \\ c & d 
         \end{pmatrix} \in SL_2(\R), \, z \in \HH \Rightarrow \gamma z = \frac{az + b}{cz+d} \in \HH. 
\]
For our purposes, it will suffice to look at the subgroup $PSL_2(\Z) = SL_2(\Z)\slash\{\pm I\}.$ Some elements of this group will be of special interest to us. Namely, we let 
\[
I = \begin{pmatrix} 
    1 & 0 \\ 0 & 1 
    \end{pmatrix}, 
    \,\, T = \begin{pmatrix}
              1 & 1 \\ 0 & 1 
             \end{pmatrix}, \,\, S = \begin{pmatrix}
				  0 & -1 \\ 1 & 0
				  \end{pmatrix}
\]
This already allows us to define the most valuable subgroup of $SL_2(\Z)$ for us: the group $\Gamma_{\theta}$ is defined then as the subgroup of $SL_2(\Z)$ generated by 
$S$ and $T^2.$ This group has $1$ and $\infty$ as cusps, and its standard fundamental domain is given by 
\[
\mathcal{D} = \{z \in \HH \colon |z| > 1, \text{Re}(z) \in (-1,1)\}.
\]
With these at hand, we define \emph{modular forms} for $\Gamma_{\theta}$. For that purpose, we will use the following notation for the Jacobi theta series: 
\[
\vartheta (z,\tau) = \sum_{n \in \Z} \exp(\pi i n^2 \tau + 2\pi i n z).
\]
We are interestes in some of its \emph{Nullwerte}, the so-called Jacobi theta series. These are defined in $\HH$ by
\begin{align*} 
\Theta_2(\tau) &= \exp\left(\frac{\pi i \tau}{4}\right)\vartheta\left(\frac{1}{2}\tau,\tau\right),\cr
\Theta_3(\tau) &= \vartheta(0,\tau) (=:\theta(\tau)), \cr 
\Theta_4(\tau) &= \vartheta\left(\frac{1}{2},\tau\right).\cr
\end{align*}
These functions satisfy the identity $\Theta_3^4 = \Theta_2^4 + \Theta_4^4.$ Moreover, under the action of the elements $S$ and $T$ of $SL_2(\Z),$ they transform as 
\begin{align}\label{eq theta transform}
(-iz)^{-1/2} \Theta_2(-1/z) &= \Theta_4(z), \,\,\Theta_2(z+1) = \exp(i\pi/4) \Theta_2(z), \cr 
(-iz)^{-1/2} \Theta_3(-1/z) &= \Theta_3(z), \,\,\Theta_3(z+1) = \Theta_4(z), \cr 
(-iz)^{-1/2} \Theta_4(-1/z) &= \Theta_2(z), \,\,\Theta_4(z+1) = \Theta_3(z). \cr 
\end{align}
These functions allow us to construct the classical lambda modular invariant given by 
\[
\lambda(z) = \frac{\Theta_2(z)^4}{\Theta_3(z)^4}.
\]
Using the nome $q=q(z) = e^{\pi i z},$ the lambda invariant can be alternatively rewritten as 
\begin{equation}\label{eq lambda}
 \lambda(z) = 16 q \times \prod_{k=1}^{\infty} \left( \frac{1+q^{2k}}{1+q^{2k-1}}\right)^8 = 16q - 128q^2 + 704q^3 + \cdots. 
\end{equation}
Besides this alternative formula, this is also invariant under de action of elements of the subgroup $\Gamma(2) \subset SL_2(\Z)$ of all matrices $\begin{pmatrix} 
                                                                                                                                                   a & b \\ c & d 
                                                                                                                                                   \end{pmatrix}$ so that 
$a\equiv b \equiv 1 \mod 2, \, c \equiv d \equiv 0 \mod 2.$ Besides this invariance, \eqref{eq theta transform} gives us immediately that 
\begin{align}\label{eq lambda transform}
\lambda(z+1) = \frac{\lambda(z)}{1-\lambda(z)}, \, \, \lambda\left(-\frac{1}{z}\right) = 1-\lambda(z).
\end{align}
We then define the modular invariant function for $\Gamma_{\theta}$ to be 
\[
J(z) = \frac{1}{16} \lambda(z) (1-\lambda(z)).
\]
From \eqref{eq lambda transform}, we obtain immediately that $J$ is invariant under the action of elements of $\Gamma_{\theta};$ i.e., 
\[
J(z+2) = J(z), \,\, J\left(-\frac{1}{z}\right) = J(z).
\]
Other properties of the functions $\lambda$ and $J$ that we may eventually need will be proved throughout the text. 

Finally, we mention that, for the proof in \S \ref{perturbed radchenko viazovska}, we will need to use the so-called \emph{$\theta-$automorphy factor} defined, for $z \in \HH$ and $\gamma \in \Gamma_{\theta},$ as 
\[
j_{\theta}(z,\gamma) = \frac{\theta(z)}{\theta(\gamma z)}. 
\]
With this in hands, we defined a slash operator of weight $k/2$ to be 
\[
(f|_{k/2} \gamma)(z) = j_{\theta}(z,\gamma)^k f \left(\frac{az+b}{cz+d}\right),
\]
where $\gamma = \begin{pmatrix} 
                a & b \\ c & d
                \end{pmatrix}.$ These slash operators induce other \emph{sign} slash operators given by 
\[
(f|_{k/2}^{\eps} \gamma) = \chi_{\eps}(\gamma) (f|_{k/2} \gamma),
\]
where we let $\chi_{\eps}$ be the homomorphism of $\Gamma_{\theta}$ so that $\chi_{\eps}(S) = \eps, \chi_{\eps}(T^2)=1.$ 

For more information on the functions $\lambda,J$ and the automorphy factors we just defined, we refer the reader to \cite{Chandrasekharan}and \cite[Section~2]{RV}; see also\cite{BN}, \cite{Zagier}.

\subsection{Functional analysis} We also recall some classical facts in functional analysis that will be useful throughout our proof. 

As our main goal and strategy throughout this manuscript is to prove that a small perturbation of the identity is invertible, we must find ways to prove that the operators arising in our computations are bounded. 
To that extent, we use two major criteria to prove boundedness -- and therefore to prove smallness of the bounding constant. These are:

\begin{enumerate}
 \item \textit{Hilbert-Schmidt test.} Let $H$ be a Hilbert space, and let there be given a linear operator $T:H \to H.$ If $T$ satisfies additionally that 
 \[
 \sum_{i,j} |\langle Te_j,e_i\rangle|^2 < + \infty
 \]
 for some orthonormal basis $\{e_i\}_{i \in \Z}$ of $H,$ then the operator $T$ is bounded. Moreover, 
 \[
 \|T\|_{H \to H}^2 \le \sum_{i,j} |\langle Te_j,e_i\rangle|^2 =: \|T\|_{HS}^2.
 \]
 \item \textit{Schur test.} Let $(a_{ij})_{i,j \ge 0}$ denote an infinite matrix. Suppose that there are two sequences $\{p_i\}_{i \ge 0}$ and $\{q_i\}_{i \ge 0}$ of 
 positive real numbers so that 
 \begin{align*}
  \sum_{i \ge 0} |a_{ij}|q_i &\le \lambda p_j, \cr 
  \sum_{j \ge 0} |a_{ij}|p_j &\le \mu q_i,\cr 
 \end{align*}
 for some positive constants $\mu,\lambda > 0.$ Then the operator $T:\ell^2(\N) \to \ell^2(\N)$ given by $a_{ij} = \langle Te_i,e_j \rangle$ (where $\{e_i\}_{i \ge 0}$ denotes the 
 standard orthonormal basis of $\ell^2(\N)$) extends to a \emph{bounded} linear operator. Moreover, 
 \[
 \|T\|_{\ell^2 \to \ell^2} \le \sqrt{\mu \lambda}. 
 \]
\end{enumerate}

Both tests will play a major role in the deduction of the validity of perturbed interpolation versions of the Radchenko--Viazovska result. The main difference is that, while Schur's test 
generally gives one boundedness for more operator, the Hilbert-Schmidt test imposes stronger conditions on the operator. In fact, let us denote by $T \in \mathcal{HS}(H)$ the fact that $\|T\|_{HS} < +\infty.$ A classical consequence of this fact is that $T \in \mathcal{K}(H);$ that is, $T$ is compact. 

This fact will be used when proving that a suitable version of our interpolation results holds for small perturbations of the origin. See, for instance, \cite[Chapter~6]{Brezis}

\subsection{Notation} We will use Vinogradov's modified notation throughout the text; that is, we write $A \lesssim B$ in case there is an absolute constant $C>0$ so that $A \le C \cdot B.$ If the constant $C$ before depends on some set of parameters 
$\lambda,$ we shall write $A \lesssim_{\lambda} B.$ 

On the other hand, we shall also use the big-$\mathcal{O}$ notation $f = \mathcal{O}(g)$ if there is an absolute constant $C$ such that $|f| \le C \cdot g,$ although the usage of this will be restricted mostly to sequences. We may occasionally 
use as well the standard Vinogradov notation $a \ll b$ to denote that there is a (relatively) \emph{large} constant $C > 1$ such that $a \le C \cdot b.$ 

We shall also denote the spaces of sequences decaying polinomially as 
\[
\ell^2_s(\N) = \left\{ \{a_n\}_{n \in \N} \colon \sum_{n \in \N} |a_n|^2 n^{2s} < +\infty \right\}.
\]
Finally, we always normalize our Fourier transform as 
$$\widehat{f}(\xi) = \mathcal{F}f(\xi) = \int_{\R^n} f(x) \, e^{-2 \pi i x \cdot \xi} \, \mmd x.$$ 

\section{Perturbed Interpolation Formulae for Band-Limited functions}\label{sec BL}

\subsection{Perturbed forms of the Shannon--Whittaker formula and Kadec's result} 

Fix a sequence $\mathbf{\varepsilon}=\{\varepsilon_k\}_{k\in\Z}$ of real numbers such that $\sup_k|\varepsilon_k|<1$. We wish to obtain a criterion based solely on the value of $L=\sup_n|\varepsilon_n|$ such that the sequence $\{n+\varepsilon_n\}_{n\in\Z}$ is completely interpolating in $\mathrm{PW}_{\pi}$, i.e, for every sequence $a=\{a_n\}\in\ell^2(\Z)$ there is a unique $f\in L^2(\R)$  of exponential type $\tau(f)\leq\pi$ that satisfies
\begin{align*}
    f(n+\varepsilon_n)=a_n.
\end{align*}
Our goal here is to obtain a simple proof of such a criterion going through new and simple ideas. We will fall short of the $1/4$ proven by Kadec by approximately 0.11, but it illustrates the power of our perturbation scheme and does not go through the theory of exponential bases. 

In this particular case, we need to invert in $\ell^2(\Z)$ the operator given by
$$A_\varepsilon(a)(n)=\sum_{k\in\Z}a_k\sinc(n+\varepsilon_n-k),$$
where
\begin{align*}
    \sinc(x)=\frac{\sin\pi (x)}{\pi x}.
\end{align*}
The fact $A_\varepsilon$ is invertible will follow from proving that it is a close perturbation of the identity whenever $L$ is sufficiently small.

\subsubsection{Auxiliary perturbations of the Hilbert transforms}

Given a sequence $a=\{a_k\}_{k\in\Z}$, we define the following operators, which are kin to the discrete Hilbert transform:
\begin{align*}
    \H_\varepsilon(a)(n)&=\sum_{k\neq n}\frac{(-1)^{n-k}a_{k}}{n+\varepsilon_n-k},  \\
    \H_0(a)(n)&=\sum_{k\neq n}\frac{(-1)^{n-k}a_{k}}{n-k}.
\end{align*}
We start by comparing these two objects:
\begin{align*}
    \H_0(a)(n)-\H_\varepsilon(a)(n)&=\sum_{k\neq n}(-1)^{n-k}a_{k}\left(\frac{1}{n-k}-\frac{1}{n+\varepsilon_n-k}  \right)\\
    &=\varepsilon_n\sum_{k\neq n}(-1)^{n-k}a_{k}\frac{1}{(n-k)(n+\varepsilon_n-k)}.
\end{align*}
This identity then gives us
\begin{align*}
    |\H_0(a)(n)-\H_\varepsilon(a)(n)|&\leq|\varepsilon_n|\sum_{k\neq n}|a_{k}|\frac{1}{|n-k|^2}\frac{|n-k|}{|n+\varepsilon_n-k|}\\
    &\leq \frac{|\varepsilon_n|}{1-|\varepsilon_n|}\sum_{k\neq n}|a_{k}|\frac{1}{|n-k|^2}.
\end{align*}
This means that, in norm, one can compare these two operators. Indeed, it is a classical result that the operator norm of $\H_0$ is $\pi$, and by Plancherel the operator norm of the transformation
\begin{align*}
    \mathcal{S}(a)=\sum_{k\neq n}a_{k}\frac{1}{|n-k|^2}
\end{align*}
is $\pi^2/3$. This in turn implies
\begin{align}\label{eq:first_norm_estimate}
    \|\H_\varepsilon\|\leq \pi+\frac{\pi^2}{3}\frac{\sup_n|\varepsilon_n|}{1-\sup_n|\varepsilon_n|}. 
\end{align}

\subsubsection{Norm estimates of the perturbation} It is worth noticing the the estimate \eqref{eq:first_norm_estimate} is very crude, as it is meant to depend only on $L=\sup_n|\varepsilon_n|$. For instance, if $\{\varepsilon_n\}_{n \in \Z}$ is a constant sequence, then the norm $\|\H_\varepsilon\|$ is equal to $\pi$. We also note that 
the fact that we obtain invertibility by means of perturbations of small norm of a invertible operator does not take into account other factors, such as cancellation.

In order to apply our perturbation scheme to the operator $A_\varepsilon$, we need to bound the following family of operators:
\begin{align*}
    P_\varepsilon(a)(n)=\sum_{k\in\Z }a_k(\mathrm{sinc}(n+\varepsilon_n-k)-\delta_{n,k}).
\end{align*}

We may rewrite them as 
\begin{align*}\begin{split}
    P_\varepsilon(a)(n)=&(\sinc(\varepsilon_n)-1)a_n+\sum_{k\neq n}a_k(\mathrm{sinc_n}(n+\varepsilon_n-k))\\
    =&(\sinc(\varepsilon_n)-1)a_n +\sum_{k\neq n}a_k\frac{(-1)^{n-k}\sin\pi\varepsilon_n}{\pi(n+\varepsilon_n-k)} \\    \end{split}
\end{align*}
This implies, on the other hand,
\begin{align*}
    P_\varepsilon(a)(n)=(\sinc(\varepsilon_n)-1)a_n +\left(\frac{\sin{\pi\varepsilon_n}}{\pi}\right)\H_\varepsilon(a)(n),
\end{align*}
which in turn implies that 
\begin{align*}
    \|P_\varepsilon\|&\leq \sup_n|\sinc(\varepsilon_n)-1|+\sup_n\left|\frac{\sin{\pi\varepsilon_n}}{\pi}\right|\|\mathcal{H}_\varepsilon\|\\
    &\leq \sup_n|\sinc(\varepsilon_n)-1|+\sup_n|\sin\pi\varepsilon_n|+\frac{\pi}{3}\frac{\sup_n|\sin\pi\varepsilon_n|\sup_n|\varepsilon_n|}{1-\sup_n|\varepsilon_n|}.
\end{align*}
Since $A_\varepsilon=P_\varepsilon+Id$, whenever 
\begin{align*}
    1-\sinc(L)+|\sin\pi L|+\frac{\pi}{3}\frac{L\sin\pi L}{1-L}<1,
\end{align*}
we will have that $A_\varepsilon$ is invertible. In particular, a routine numerical evaluation implies that $L<0.239$ satisfies the inequality above. Let then $A_{\eps}^{-1} : \ell^2(\Z) \to \ell^2(\Z)$ be the inverse of $A_{\eps},$ which is 
continuous by the considerations above. We know, by the Shannon--Whittaker interpolation formula \ref{eq shannon form}, that $A_{\eps}$ takes $\{f(k)\}_{k \in \Z}$, for $f \in PW_{\pi}$, to 
$\{f(k+\eps_k)\}_{k \in \Z}.$ This is enough to prove the assertion about recovery, and as such implies that 
\[
\sum_{n \in \Z} |f(n+\eps_n)|^2 
\]
is an equivalent norm to the usual $L^2-$norm on $PW_{\pi},$ by \cite[Theorem~1.13]{Young}. 

Moreover, by writing 
\[
A_{\eps}^{-1}(b)(k) = \sum_{n \in \Z} b_n \cdot \rho_{k,n}, 
\]
we have immediately 
\begin{equation}\label{eq shannon invert}
\sum_{n \in \Z} f(n+ \eps_n) \rho_{k,n} = f(k), 
\end{equation}
and $\sup_{n} \left( \sum_{k \in \Z} |\rho_{k,n}|^2\right) \lesssim 1.$ If $(A_{\eps}^{-1})^* : \ell^2(\Z) \to \ell^2(\Z)$ denotes the adjoint of the inverse of $A_{\eps},$ then we see that 
\[
\| (A_{\eps}^{-1})^* (\sinc_x(k)) \|_{\ell^2(\Z)} \lesssim \|A_{\eps}^{-1}\|_{\ell^2 \to \ell^2},
\]
where the implicit constant does \emph{not} depend on $x,$ and we let $\sinc_x(k) := \sinc(x-k).$ Therefore, by letting $g_n(x) =  \sum_{k \in \Z} \rho_{k,n}\sinc(x-k)$, 
we have 
\[
\sup_{x \in \R} \left( \sum_{n \in \Z} |g_n(x)|^2 \right)^{1/2} \lesssim 1,
\]
and thus, by the previous considerations,  the sum 
$\sum_{n \in \Z} f(n + \eps_n) g_n(x) $
converges absolutely. As $\langle (A_{\eps}^{-1})^* (\sinc_x(k)), f(n+\eps_n) \rangle =\langle \sinc_x(k), A_{\eps}^{-1}\left(f(n+\eps_n) \right)\rangle = f(x)$ by Shannon--Whittaker, 
this implies 
\[
f(x) = \sum_{n \in \Z} f(n+\eps_n) g_n(x),
\]
as desired. This finishes the proof of Theorem \ref{th:perturbed_PW}. 

\subsection{From Shannon to Vaaler: the proof of Theorem \ref{thm shannon-to-vaaler}} We now concentrate in proving that the usual Shannon--Whittaker interpolation 
formula implies Vaaler's celebrated interpolation result with derivatives \cite{Vaaler}. 

Indeed, as proving that the interpolation formula of Theorem \ref{thm shannon-to-vaaler} converges uniformly on compact sets of $\C$ is a routine computation, given that
$\{a_k\}_{k \in \Z}, \{b_k\}_{k \in \Z} \in \ell^2(\Z),$ we shall omit this part and focus on proving that the asserted equality holds. 

Given a sequence $a=\{a_k\}_{k\in\Z}$, we define the following operators:
\begin{align*}
    \H(a)(k)&=\frac{1}{\pi}\sum_{0\neq j\in\Z}\frac{a_{k-j}}{j} = \frac{1}{\pi}\sum_{k\neq j\in\Z}\frac{a_{j}}{k-j}, \\
    \H_1(a)(k)&=\frac{1}{\pi}\sum_{j\in\Z}\frac{a_{k-j}}{j+\tfrac{1}{2}} =\frac{1}{\pi}\sum_{j\in\Z}\frac{a_{j}}{k-j+\tfrac{1}{2}}.
\end{align*}
It is known that both $\H$ and $\H_1$ are bounded operators in $\ell^2(\Z)$, with $\H_1$ being also unitary with $\H_2$ its inverse being given by
\begin{align*}
    \H_2(a)(k)&=-\frac{1}{\pi}\sum_{j\in\Z}\frac{a_{j-k}}{j-\tfrac{1}{2}} =\frac{1}{\pi}\sum_{j\in\Z}\frac{a_{j}}{j-k+\tfrac{1}{2}}.
\end{align*}

Given a function $f \in PW_{\pi}$, as a consequence of the Shannon--Whittaker interpolation formula we obtain, for every $k\in \Z$, that
\begin{align*}
    f'(k)=\sum_{j\neq k}\frac{f(j)}{k-j}(-1)^{k-j}.
\end{align*}
We consider three sequences, as follows:
\begin{align*}
    a(k)&=f(2k-1), \,\, b(k)=f(2k), \,\,  c(k)=f'(2k). 
\end{align*}
We have, thus,
\begin{align*}
    c(k)=f'(2k)&=\sum_{j\neq 2k}\frac{f(j)}{2k-j}(-1)^{2k-j} =\frac{1}{2}\sum_{j\neq k}\frac{f(2j)}{k-j}-\frac{1}{2}\sum_{j\in\Z}\frac{f(2j-1)}{k-j+\tfrac{1}{2}}     \\
               &=\frac{1}{2}\sum_{j\neq k}\frac{b(j)}{k-j}-\frac{1}{2}\sum_{j\in\Z}\frac{a(j)}{k-j+\tfrac{1}{2}} =\frac{\pi}{2}\H(b)(k)-\frac{\pi}{2}\H_1(a)(k).
\end{align*}
This means that, for every $k \in \Z,$
\begin{align*}
    \H_1(a)(k)=\H(b)(k)-\frac{2}{\pi} c(k).
\end{align*}
Since $\H_2$ is the inverse of $\H_1$, this can be rewritten as 
$$a(k)=(\H_2\circ\H)(b)(k)-\frac{2}{\pi}\H_2(c)(k).$$
We know, by the Shannon--Whittaker interpolation formula, that
\begin{align*}
    f(x)&=\sum_{k\in\Z}f(k)\frac{\sin\pi (x-k)}{\pi (x-k)}.
\end{align*}
This implies, on the other hand,
\begin{align*}
    f(x)=&\sum_{k\in\Z}f(2k)\frac{\sin\pi (x-2k)}{\pi (x-2k)}+ \sum_{k\in\Z}[(\H_2\circ\H)(b)(k)-\frac{2}{\pi}\H_2(c)(k)]\frac{\sin\pi (x-2k+1)}{\pi (x-2k+1)} \\
        =&\sum_{k\in\Z}b(k)\frac{\sin\pi x}{\pi (x-2k)}+ \sum_{k\in\Z}(\H_2\circ\H)(b)(k)\frac{\sin\pi (x-2k+1)}{\pi (x-2k+1)} \\
        &-\frac{2}{\pi}\sum_{k\in\Z}\H_2(c)(k)\frac{\sin\pi (x-2k+1)}{\pi (x-2k+1)} = A(x)+B(x)+C(x).\\
\end{align*}
We shall investigate each term $A,B$ and $C$ thoroughly in order to obtain our final result. 

\subsubsection{Determining $C$}
By considering the family of functions $h_j\in PW_\pi$  -- which satisfy the important property $h_j(k)=0$, if $k\in2\Z$ -- given by
$$h_j(z)=\frac{\sin^2(\tfrac{1}{2}\pi z)}{\pi^2(z-2j)},$$
we obtain 
\begin{align*}
    C(x)&=-2\sum_{k\in\Z}\sum_{j\in\Z}\frac{f'(2j)}{\pi^2(j-k+\tfrac{1}{2})}\frac{\sin\pi (x-2k+1)}{\pi (x-2k+1)} \\
      &=4\sum_{j\in\Z}f'(2j)\sum_{k\in\Z}\frac{1}{\pi^2((2k-1)-2j)}\frac{\sin\pi (x-(2k-1))}{\pi (x-(2k-1))} \\
      &=4\sum_{j\in\Z}f'(2j)\sum_{k\in\Z}h_j(2k-1)\frac{\sin\pi (x-(2k-1))}{\pi (x-(2k-1))} \\
      &=4\sum_{j\in\Z}f'(2j)\sum_{k\in\Z}h_j(k)\frac{\sin\pi (x-k)}{\pi (x-k)}.
\end{align*}
Notice that one can use Fubini's theorem to justify all the changes of order of summation, by the fact that $h_j \in PW_{\pi}$. By applying the Shannon-Whittaker interpolation to $h_j$, we have
$$C(x)=4\sum_{j\in\Z}f'(2j)\frac{\sin^2(\tfrac{1}{2}\pi x)}{\pi^2(x-2j)}$$
\subsubsection{Determining $B$}
For the second term, we expand
\begin{align*}
    B(x)=&\sum_{k\in\Z}\H_2\circ\H(b)(k)\frac{\sin\pi (x-2k+1)}{\pi (x-2k+1)} \\
        =&\frac{1}{\pi}\sum_{k\in\Z}\frac{\sin\pi (x-2k+1)}{\pi (x-2k+1)}\sum_j\frac{\H (b)(j)}{j-k+\tfrac{1}{2}} \\
        =&\frac{1}{\pi^2}\sum_{k\in\Z}\frac{\sin\pi (x-2k+1)}{\pi (x-2k+1)}\sum_j\sum_{l\neq j}\frac{b(l)}{(j-k+\tfrac{1}{2})(j-l)}.
\end{align*}
By Fubini's theorem, this implies
\begin{align*}
    B(x)=&\frac{1}{\pi^2}\sum_{l\in\Z}b(l)\sum_{j\neq l}\frac{1}{j-l}\sum_{k\in\Z}\frac{1}{j-k+\tfrac{1}{2}}\frac{\sin\pi (x-2k+1)}{\pi (x-2k+1)} \\
        =&\frac{1}{\pi^2}\sum_{l\in\Z}b(l)\sum_{j\neq l}\frac{2}{j-l}\sum_{k\in\Z}\frac{1}{2j-2k+1}\frac{\sin\pi (x-2k+1)}{\pi (x-2k+1)} \\
        =&\frac{1}{\pi^2}\sum_{l\in\Z}b(l)\sum_{j\neq l}\frac{2}{j-l}\frac{\sin^2(\tfrac{1}{2}\pi x)}{2j-x} = \frac{\sin^2(\tfrac{1}{2}\pi x)}{\pi^2}\sum_{l\in\Z}b(l)\sum_{j\neq 0}\frac{1}{j(j+l-\tfrac{x}{2})}.
\end{align*}
But it is a well-known fact that the summation formula 
\begin{align*}
    g(z)&=\sum_{j\neq 0}\frac{1}{j(j+z)} =\frac{\psi(1+z)-\psi(1-z)}{z},
\end{align*}
holds, where $\psi(z)=\frac{d}{dz}\log{\Gamma}(z)$ is the digamma function. This implies, on the other hand,
\begin{align*}
    B(x)&=\frac{2\sin^2(\tfrac{1}{2}\pi x)}{\pi^2}\sum_{l\in\Z}b(l)\frac{\psi(1+l-\tfrac{x}{2})-\psi(1-l+\tfrac{x}{2})}{2l-x}.
\end{align*}
\subsubsection{Determining $A+B$}

Using that $\sin(2x)=2\sin x\cos x$, we obtain
\begin{align*}
    A(x)=-\frac{2\sin^2(\tfrac{1}{2}\pi x)}{\pi^2}\sum_{l\in\Z}b(l)\frac{\pi\cot(\pi\tfrac{x}{2} )}{2l-x}.
\end{align*}
The digamma function satisfies the following functional equations, which we shall make use of:
\begin{align*}
    \psi(1-z)&=\psi(z)+\pi\cot\pi z, \\
    \psi(1+z)&=\psi(z)+\frac{1}{z}. 
\end{align*}
Using these relations with $z=\frac{x}{2}-l$ in the equations above, we obtain readily
\begin{align*}
    A(x)+B(x)= \frac{4\sin^2(\tfrac{1}{2}\pi x)}{\pi^2}\sum_{l\in\Z}b(l)\frac{1}{(x-2l)^2}.
\end{align*}
\subsubsection{A+B+C} Summing the analysis undertaken for the terms above, we have
\begin{align*}
    f(x)=A(x)+B(x)+C(x)=\frac{4\sin^2(\tfrac{1}{2}\pi x)}{\pi^2}\sum_{j\in\Z}\left\{\frac{f(2k)}{(x-2k)^2}+\frac{f'(2k)}{x-2k}\right\}. 
\end{align*}
This finishes the proof of Theorem \ref{thm shannon-to-vaaler}.

\subsection{Perturbations of Interpolation Formulae with derivatives} By the arguments in the previous section, the formula we just derived for $PW_{2\pi}$, i.e.,
\begin{align*}
    f(x)=\frac{\sin^2(\pi x)}{\pi^2}\sum_{k\in\Z}\left\{\frac{f(k)}{(x-k)^2}+\frac{f'(k)}{x-k}\right\},
\end{align*}
converges in compact sets of $\C$. We fix, for shortness, the notation
\begin{align*}
    g(x)=\frac{\sin^2(\pi x)}{\pi^2x^2}, h(x)=\frac{\sin^2(\pi x)}{\pi^2x},
\end{align*}
which means we can read Vaaler's interpolation as 
\begin{align*}
     f(x)=\sum_{k\in\Z}\left\{f(k)g(x-k)+f'(k)h(x-k)\right\}.
\end{align*}
Because of uniform convergence, we can differentiate term by term in the above formula. This implies, thus,
\begin{align*}
     f'(x)=\sum_{k\in\Z}\left\{f(k)g'(x-k)+f'(k)h'(x-k)\right\}.
\end{align*}
We record, for completeness, the formulae for the derivatives of $g',\, h':$
\begin{align*}
    g'(x)&=\frac{2\sin(\pi x)(\pi x \cos(\pi x)-\sin(\pi x))}{\pi^2x^3}, \\
    h'(x)&=\frac{\sin(\pi x)(2\pi x \cos(\pi x)-\sin(\pi x))}{\pi^2x^2},
\end{align*}
and for $n\in\Z,$
\begin{align*}
    g_n=h'_n&=0,\,\, g'_n=h_n=\delta_{0}.
\end{align*}
Our goal now is to invert the operator $\mathcal{A}=\mathcal{A}_\varepsilon$ defined in $\ell^2(\Z)\times\ell^2(\Z)$ by 
\begin{align}\label{eq:system_2}
   \mathcal{A}_1(\mathbf{a},\mathbf{b})_n&=\sum_{k\in\Z}a_k\cdot g(n+\varepsilon_n-k)+\sum_{k\in\Z}b_k\cdot h(n+\varepsilon_n-k)\cr
\mathcal{A}_2(\mathbf{a},\mathbf{b})_n&=\sum_{k\in \Z}a_k\cdot g'(n+\varepsilon_n-k)+\sum_{k\in\Z}b_k\cdot h'(n+\varepsilon_n-k),
\end{align} 
where $\mathcal{A}(\mathbf{a},\mathbf{b})=(\mathcal{A}_1(\mathbf{a},\mathbf{b}),\mathcal{A}_2(\mathbf{a},\mathbf{b}))$ for $(\mathbf{a},\mathbf{b})\in \ell^2(\Z)\times\ell^2(\Z)$. 
Furthermore, we wish to establish a criterion that depends only on $L=\sup|\varepsilon_n|$. For that purpose, we estimate when the operator norm of $\mathcal{A}_{\eps}-Id$ 
from $\ell^2(\Z) \times \ell^2(\Z)$ to itself is small, in terms of $L$.

\subsubsection{Auxiliary perturbations for the derivative case}

Given a sequence $\mathbf{a}=\{a_k\}_{k\in\Z}$, we define the following operators:
\begin{align*}
    \H^p_\varepsilon(\mathbf{a})_n&=\sum_{k\neq n}\frac{a_{k}}{(n+\varepsilon_n-k)^p},
\end{align*}
and  denote by $\H^p_0$ the operator associated to the sequence $\varepsilon_n=0, \forall n \in \Z$. In an analogous manner to the proof of Theore m\ref{th:perturbed_PW}, we compare: 
\begin{align*}
    \H^p_0(\mathbf{a})_n-\H^p_\varepsilon(\mathbf{a})_n&=\sum_{k\neq n}a_k\left(\frac{1}{(n-k)^p}-\frac{1}{(n+\varepsilon_n-k)^p}\right)  \cr
                     &=\sum_{j=0}^{p-1}\binom{p}{j}\varepsilon_n^{p-j}\sum_{k\neq n}\frac{a_{k}}{(n+\varepsilon_n-k)^p(n-k)^{p-j}}.         
\end{align*}
Therefore,
\begin{align*}
    |\H^p_0(\mathbf{a})_n-\H^p_\varepsilon(\mathbf{a})_n|
                     &\leq\sum_{j=0}^{p-1}\binom{p}{j}|\varepsilon_n|^{p-j}\sum_{k\neq n}\frac{a_{k}}{|n-k|^{2p-j}}\frac{|n-k|^p}{(|n-k|-|\varepsilon_n|)^p}.         \cr
                     &\leq \frac{1}{(1-|\varepsilon_n|)^p}\sum_{j=0}^{p-1}\binom{p}{j}|\varepsilon_n|^{p-j}\mathcal{S}^{2p-j}(\mathbf{a^*})_n,
\end{align*}
where
\begin{align*}
    \mathcal{S}^q_\varepsilon(\mathbf{a})_n&=\sum_{k\neq n}\frac{a_{k}}{|n-k|^q},
\end{align*}
and $\mathbf{a}^*=(|a_n|)$. Let us consider $S(p)=\max\{\|\mathcal{S}^q\|,\,q=1,\dots,2p\}$. Since $\mathcal{S}^{q+1}(\mathbf{a^*})_n\leq\mathcal{S}^{q}(\mathbf{a^*})_n$, we have
\begin{align*}
  |\H^p_0(\mathbf{a})_n-\H^p_\varepsilon(\mathbf{a})_n|
                     &\leq \frac{\mathcal{S}^{p+1}(\mathbf{a^*})_n}{(1-|\varepsilon_n|)^p}\sum_{j=0}^{p-1}\binom{p}{j}|\varepsilon_n|^{p-j} \cr
                     &=\left(\frac{(1+|\varepsilon_n|)^p-1}{(1-|\varepsilon_n|)^p}\right)\mathcal{S}^{p+1}(\mathbf{a^*})_n.
\end{align*}
This means that we have the following estimate on the norm of the perturbed operator:
\begin{align}\label{eq:H_p_norm_control}
\|\H^p_\varepsilon\|\leq \gamma_p(L),
\end{align}
where we let 
$$\gamma_p(L)=\|\H^p_0\|+\frac{(1+L)^p-1}{(1-L)^p}\|\mathcal{S}^{p+1}\|.$$
Now, in order to estimate the value of $\gamma_p(L),$ we resort to \cite[Corollary 2]{Lit02}, which gives us that
$$\|\H^p_0\|=\frac{(2\pi)^mb_m}{m!},$$ 
where $b_m$ is the maximum of $|B_m(x)|$ when $x\in[0,1]$, and $B_m$ denotes the $m$-th Bernoulli polynomial. Therefore, 
\begin{align*}
\|\H^1_0\|&=\pi, \|\H^2_0\|= \frac{\pi^2}{3}, \|\H^3_0\|= \frac{\pi^3}{9\sqrt{3}}.
\end{align*}
On the other hand, by Plancherel`s theorem it is easy to see that
$$\|\mathcal{S}^p\|= 2\zeta(p).$$
Joining all these data into \eqref{eq:H_p_norm_control}, we obtain
\begin{align}\label{eq:H_p_norm_control2}
\|\H^1_\varepsilon\|&\leq\pi+\left(\frac{L}{1-L}\right)\frac{\pi^2}{3},\cr
\|\H^2_\varepsilon\|&\leq\frac{\pi^2}{3}+2\left(\frac{L^2+2L}{(1-L)^2}\right)\zeta(3) ,\cr
\|\H^3_\varepsilon\|&\leq\frac{\pi^3}{9\sqrt{3}}+\left(\frac{L^3+3L^2+3L}{(1-L)^3}\right)\frac{\pi^4}{45}.
\end{align}

\subsubsection{Norm estimates of the perturbations in the derivative case}

In order to invert the operator $\mathcal{A}_\varepsilon$, we estimate the norm of $\mathcal{P}_\varepsilon=\mathcal{A}_\varepsilon-Id=(\mathcal{P}_1,\mathcal{P}_1)$, where
\begin{align}\label{eq:system_10}
   \mathcal{P}_1(\mathbf{a},\mathbf{b})_n&=\sum_{k\in\Z}a_k\cdot (g(n+\varepsilon_n-k)-\delta_{n,k})+\sum_{k\in\Z}b_k\cdot h(n+\varepsilon_n-k),\cr
\mathcal{P}_2(\mathbf{a},\mathbf{b})_n&=\sum_{k\in \Z}a_k\cdot g'(n+\varepsilon_n-k)+\sum_{k\in\Z}b_k\cdot (h'(n+\varepsilon_n-k)-\delta_{n,k}).
\end{align} 
By a straightforward calculation,
\begin{align}\label{eq:system_11}
   \mathcal{P}_1(\mathbf{a},\mathbf{b})_n=\,&(g(\varepsilon_n)-1)a_n+\frac{\sin(\pi\varepsilon_n)^2}{\pi^2}\H^2_\varepsilon(\mathbf{a})_n + h(\varepsilon_n)b_n+\frac{\sin(\pi\varepsilon_n)^2}{\pi^2}\H^1_\varepsilon(\mathbf{b})_n,\cr
\mathcal{P}_2(\mathbf{a},\mathbf{b})_n=\,&g'(\varepsilon_n)a_n+\frac{2\sin(\pi \varepsilon_n)(\pi \varepsilon_n\cos(\pi\varepsilon_n )-\sin(\pi \varepsilon_n))}{\pi^2}\H^3_\varepsilon(\mathbf{a}) \cr
		&+(h'(\varepsilon_n)-1)b_n+\frac{\sin(\pi \varepsilon_n)(2\pi \varepsilon_n \cos(\pi \varepsilon_n)-\sin(\pi \varepsilon_n))}{\pi^2}\H^2_\varepsilon(\mathbf{b}).
\end{align} 
Thus,  
$$\|\mathcal{P}_\varepsilon\|\leq\sqrt{2}\max\{|g(L)-1|,|h'(L)-1|,|g'(L)|,|h(L)|\}+\frac{\sin(\pi L)^2}{\pi^2}\|\mathcal{G}_\varepsilon\|,$$
where $\mathcal{G}_\varepsilon=\mathcal{G} =(\mathcal{G}_1,\mathcal{G}_2)$ and
\begin{align}\label{eq:system_3}
      \mathcal{G}_1(\mathbf{a},\mathbf{b})_n=& \H^2_\varepsilon(\mathbf{a})_n+\H^1_\varepsilon(\mathbf{b})_n, \cr
   \mathcal{G}_2(\mathbf{a},\mathbf{b})_n=& \frac{2(\pi \varepsilon_n\cos(\pi\varepsilon_n )-\sin(\pi \varepsilon_n))}{\sin(\pi\varepsilon)}\H^3_\varepsilon(\mathbf{a}) +\frac{(2\pi \varepsilon_n \cos(\pi \varepsilon_n)-\sin(\pi \varepsilon_n))}{\sin(\pi\varepsilon)}\H^2_\varepsilon(\mathbf{b}).
\end{align} 
By taking $L<1/4$ and using the Cauchy-Schwarz inequality, we have
\begin{align*}
&\|\mathcal{G}_\varepsilon\|^2/2 \leq \max\{\|\H^1_\varepsilon\|,\|\H^2_\varepsilon\|\}^2 \cr
&+\max\left\{\left(\frac{2(\pi L\cos(\pi L )-\sin(\pi L))}{\sin(\pi L)}\right)^2\|\H^3_\varepsilon\|^2,\left(\frac{(2\pi L \cos(\pi L)-\sin(\pi L))}{\sin(\pi L)}\right)^2\|\H^2_\varepsilon\|^2\right\}  \cr
&\leq\max\{\gamma_1(L)^2,\gamma_2(L)^2\} \cr
&+\max\left\{\left(\frac{2(\pi L\cos(\pi L )-\sin(\pi L))}{\sin(\pi L)}\right)^2\gamma_3(L)^2,\left(\frac{(2\pi L \cos(\pi L)-\sin(\pi L))}{\sin(\pi L)}\right)^2\gamma_2(L)^2\right\}.
\end{align*}
We note that we have abused the notation $\|\mathcal{G}_{\eps}\|$ to denote the operator norm of $\mathcal{G}_{\eps}$ when defined on $\ell^2(\Z) \times \ell^2(\Z).$ One can further check that, for $0\leq L<1/4,$ 
\begin{align*}
|g(L)-1|<|h'(L)-1|, |h(L)|&<|g'(L)|, \gamma_1(L)^2<\gamma_2(L)^2 \text{ and } \cr
\left(\frac{2(\pi L\cos(\pi L )-\sin(\pi L))}{\sin(\pi L)}\right)^2\gamma_3(L)^2 &<\left(\frac{(2\pi L \cos(\pi L)-\sin(\pi L))}{\sin(\pi L)}\right)^2\gamma_2(L)^2 ,
\end{align*}
which means, in turn, 
$$\|\mathcal{G}_\varepsilon\|\leq \gamma_2(L)\sqrt{2\left(1+\left(\frac{(2\pi L \cos(\pi L)-\sin(\pi L))}{\sin(\pi L)}\right)^2\right)},$$
and directly implies the estimate
\begin{align*}
\|\mathcal{P}_\varepsilon\|\leq& 1-\frac{\sin(\pi L)(2\pi L \cos(\pi L)-\sin(\pi L))}{\pi^2L^2}+\frac{2\sin(\pi L)(\sin(\pi L)-\pi L \cos(\pi L))}{\pi^2L^3}\cr
+\frac{\sin(\pi L)^2}{\pi^2} &\left(\frac{\pi^2}{3}+2\left(\frac{L^2+2L}{(1-L)^2}\right)\zeta(3)\right)\sqrt{2\left(1+\left(\frac{(2\pi L \cos(\pi L)-\sin(\pi L))}{\sin(\pi L)}\right)^2\right)}.
\end{align*}
By evaluating the last expression on the right-hand side above numerically, we obtain that we can go up to $L<0.111$ and mantain $\|\mathcal{P}_\varepsilon\|<1$. By invoking again \cite[Theorem~1.13]{Young}, we see immediately that 
\[
\sum_{n \in \Z} \left(|f(n+\eps_n)|^2 + |f'(n+\eps_n)|^2\right) 
\]
yields an equivalent norm for $PW_{2\pi},$ as long as $\sup_n |\eps_n| < 0.111.$ 

Moreover, as $\mathcal{A}_{\eps}^{-1} : \ell^2(\N) \times \ell^2(\N) \to \ell^2(\N) \times \ell^2(\N)$ is bounded, the same argument as in the proof of Theorem \ref{th:perturbed_PW} shows that there are $\varrho_{k,n}, , \vartheta_{k,n}, \varrho'_{k,n}, \vartheta'_{k,n}$ such that 
\begin{align}\label{eq recover derivative}
f(k) &= \sum_{n \in \Z} f(n+\eps_n) \varrho_{k,n} + f'(n+\eps_n) \vartheta_{k,n}, \cr 
f'(k) &= \sum_{n \in \Z} f(n+\eps_n) \varrho'_{k,n} + f'(n+\eps_n) \vartheta'_{k,n},
\end{align}
and $\sup_n \left(\sum_{k \in \Z} \{ |\varrho_{k,n}|^2 + |\vartheta_{k,n}|^2 + |\varrho'_{k,n}|^2 + |\vartheta'_{k,n}|^2 \} \right) \lesssim 1.$ By using the adjoint $(\mathcal{A}_{\eps}^{-1})^* : \ell^2(\Z) \times \ell^2(\Z) \to \ell^2(\Z) \times \ell^2(\Z)$ 
in an analogous manner to that of the proof of Theorem \ref{th:perturbed_PW} together with \eqref{eq recover derivative} and \eqref{eq vaaler interpol},  we obtain the asserted existence of the functions $g_n, h_n \in PW_{2\pi}$ so that 
\[
f(x) = \sum_{n \in \Z} f(n + \eps_n) g_n(x) + f'(n + \eps_n) h_n(x),
\]
where the right-hand side converges absolutely, as desired. This proves the desired perturbation of Vaaler's interpolation formula, given in Theorem \ref{thm vaaler perturb}.

\section{Perturbations of Fourier interpolation on the real line}\label{perturbed radchenko viazovska}

\subsection{Improved estimates on the interpolation basis}\label{sec improve} As our goal is to obtain the perturbations of the formula
\[
f(x) = \sum_{n \ge 0} [f(\sqrt{n})a_n(x) + \widehat{f}(\sqrt{n}) \widehat{a_n}(x)]
\]
to as large as possible, we must improve the decay estimates for the interpolating functions $a_n.$ In \cite[Section~5]{RV}, the authors prove that $a_n/n^2$ is uniformly bounded 
in $n\ge 0, x \in \R.$ In order to be able to make the perturbations larger, we need to improve that result substantially, as even the refined bound $|a_n| = \mathcal{O}(n^{1/4}\log^{3/2}(1+n))$ from \cite{BRS} 
does not seem to be enough for our purposes. This first subsection is, therefore, devoted to the proof of Theorem \ref{eq improvement decay}.   

In order to prove this result we will employ the moral idea behind the characterization of Gelfand-Shilov spaces. These are spaces where, in a nutshell, both function and Fourier transform decay as fast as the negative exponential of a certain monomial. 
The following result connects these spaces with specific decay on function and Fourier side for certain Schwartz norms. See, e.g., \cite[Theorem~2.3]{CCK} for a proof.

\begin{lemma}\label{lemma gelfand shilov} Let $A,B,r,s>0$ be positive constants. The following assertions are equivalent: 

\begin{enumerate}
 \item There is $C>0$ such that 
 \[
 \sup_{x \in \R} |x^{\alpha} \varphi(x)| \le C A^{\alpha} (\alpha !)^r, \;\;\;\; \sup_{\xi \in \R} |\xi^{\beta} \widehat{\varphi}(\xi)| \le C B^{\beta} (\beta !)^s,
 \]
 for all $\alpha,\beta \in \Z;$
 \item There is $C'>0$ such that 
 \[
 |\varphi(x)| \le C' e^{-\theta |x/A|^{\frac{1}{r}}}, \;\;\;\; |\widehat{\varphi}(\xi)| \le C' e^{-\Omega |\xi/B|^{\frac{1}{s}}},
 \]
 for all $x,\xi \in \R.$ 
\end{enumerate}
\end{lemma}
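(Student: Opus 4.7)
The proof plan is to treat this as a classical duality between moment bounds and exponential decay via an optimization argument, done symmetrically on the spatial and the Fourier side. Since the hypotheses are symmetric (bounds on $\varphi$ and $\widehat{\varphi}$ have the same form, with $r,s$ interchanging roles) I will prove the spatial equivalence
\[
\sup_x |x^{\alpha} \varphi(x)| \le C A^{\alpha} (\alpha!)^r \,\,\text{for all } \alpha \iff |\varphi(x)| \le C' e^{-\theta |x/A|^{1/r}}\,\, \text{for all } x,
\]
and then apply the same argument verbatim to $\widehat{\varphi}$ with parameters $B,s$.

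For the forward implication, I start from the polynomial inequality, which can be rewritten as
\[
|\varphi(x)| \le C \inf_{\alpha \in \N} \frac{A^{\alpha} (\alpha!)^r}{|x|^{\alpha}}.
\]
Treating $\alpha$ as continuous and using Stirling's approximation $(\alpha!)^r \approx (\alpha/e)^{r\alpha} \cdot \sqrt{2\pi \alpha}^{\,r}$, the logarithm of the expression inside the infimum becomes, up to lower-order terms, $\alpha\bigl[r \log \alpha - r + \log(A/|x|)\bigr]$. Taking the derivative in $\alpha$ and setting it to zero yields the critical point $\alpha_{*} = (|x|/A)^{1/r}$, and substituting back gives a minimum value of $e^{-r(|x|/A)^{1/r}}$. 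Rounding $\alpha_{*}$ to the nearest integer costs only a multiplicative constant, yielding (2) with $\theta = r$ (and analogously $\Omega = s$ on the Fourier side) and a new constant $C'$ absorbing the Stirling factors.

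For the reverse implication, I use that $|x|^{\alpha} e^{-\theta |x/A|^{1/r}}$ is a single-variable function to be maximized in $x$. Via the substitution $y = |x/A|^{1/r}$, the quantity to maximize becomes $A^{\alpha} y^{r \alpha} e^{-\theta y}$, whose optimum over $y \ge 0$ is attained at $y = r\alpha/\theta$, producing the value $A^{\alpha}(r\alpha/(\theta e))^{r\alpha}$. Stirling's formula again identifies $(r\alpha)^{r\alpha} e^{-r\alpha}$ with $(r\alpha)!$ up to a $\sqrt{r\alpha}$ factor, and then a standard multinomial comparison $(r\alpha)! \le (r!)^{\alpha} (\alpha!)^r \cdot C^{\alpha}$ bundles everything into the desired form $C'' \tilde{A}^{\alpha} (\alpha!)^r$, where $\tilde A$ absorbs the factor $(r/(\theta e))^r \cdot r!$ into a redefinition of the base constant; the outer $C$ in (1) then absorbs the remaining polynomial-in-$\alpha$ loss. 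Again the Fourier side is identical.

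The main subtlety is bookkeeping of constants: the lemma as stated phrases the equivalence with $A, B, r, s$ fixed and existential $C, C', \theta, \Omega$, so the Stirling-produced constants $r/(\theta e), r!$ etc.\ must be absorbed either into $\theta, \Omega$ (going forward) or into the base $A, B$ (going backward). Since the $A, B$ in the statement are not intended to be preserved exactly but rather play the role of scale parameters, the cleanest reading is that the implications should be understood as: existence of $C, A, B$ satisfying (1) is equivalent to existence of $C', \theta, \Omega, A, B$ satisfying (2), and the above optimizations give quantitative versions of each implication. No analytic obstacle arises beyond careful use of Stirling; the entire content is a Young-transform-style duality between the Gevrey-type sequence $(\alpha!)^r$ and the exponential $e^{-\theta t^{1/r}}$.
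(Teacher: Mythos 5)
Your argument is correct in substance. Note that the paper does not prove this lemma at all --- it is quoted from \cite[Theorem~2.3]{CCK} --- and the proof there is exactly the Legendre/Young-transform duality you describe, so you have reconstructed the standard argument: in one direction minimize $A^{\alpha}(\alpha!)^{r}|x|^{-\alpha}$ over $\alpha$, in the other maximize $|x|^{\alpha}e^{-\theta|x/A|^{1/r}}$ over $x$, with Stirling mediating between $(\alpha!)^r$ and $(\alpha/e)^{r\alpha}$. You also correctly identify the only genuine subtlety, namely that the constants $A,B,\theta,\Omega$ are not preserved exactly and the equivalence must be read existentially; this is exactly how the lemma is used in the paper (in the proof of Theorem \ref{eq improvement decay} only the forward direction is needed, with $r=1$ and an unspecified $\theta>0$, and the optimization in $k$ is in fact redone by hand there). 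Two small points of care: in the forward direction the Stirling correction $\alpha_*^{\,r/2}$ at the optimal $\alpha_*=(|x|/A)^{1/r}$ grows with $|x|$, so it cannot be absorbed into the constant $C'$ as you state but must instead be absorbed by taking $\theta$ strictly smaller than $r$ (which you allow for elsewhere); and the multinomial bound $(r\alpha)!\le r^{r\alpha}(\alpha!)^{r}$ only literally makes sense for integer $r$, whereas for general $r>0$ one should use the direct inequality $(\alpha/e)^{r\alpha}\le(\alpha!)^{r}$, which shortens your reverse step anyway.
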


We will use this result together with explicit estimates on $\{b_n^{\pm}\}_{n \ge 0},$ in the same spirit as in \cite{RV}. Indeed, let $\varepsilon \in \{\pm\}$ be a sign. In \cite{RV}, 
the authors consider the generating functions 
\[
\sum_{n = 0}^{\infty} g_n^{\eps}(z) e^{i\pi n \tau} =: K_{\eps}(\tau,z),
\]
where $g_n^{\eps}$ are weakly holomorphic modular forms of weight $3/2$ with growth and coefficient properties so that the functions 
\[
b_n^{\eps}(x) = \frac{1}{2} \int_{-1}^1 g_n^{\eps}(z) e^{i\pi x^2 z} \, \mmd z
\]
are eigenvectors of the Fourier transform associated to the eigenvalues $\eps$ satisfying that $b_n^{\pm} = a_n \pm \widehat{a_n},$ for $\{a_n\}_{n \ge 0}$ defined 
as in \ref{eq:interpolation_schwartz}. We mention, for completeness, the following result: 
\begin{theorem}[Theorem 3 in \cite{RV}]\label{thm RV functional} The following assertions hold:
\begin{align}\label{eq functional}
K_+(\tau,z) & = \frac{\theta(\tau)(1-2\lambda(\tau))\theta(z)^3 J(z)}{J(z) - J(\theta)}, \cr 
K_-(\tau,z) & = \frac{\theta(\tau)J(\tau)\theta(z)^3(1-2\lambda(z))}{J(z) - J(\theta)}, \cr 
\end{align}
where $\theta, J$ and $\lambda$ are as previously defined. Moreover, $K_{\eps}(\tau,z)$ are meromorphic functions with poles at $\tau \in \Gamma_{\theta}z,$ and the right-hand side 
of \eqref{eq functional} converges for all $\tau$ with large enough imaginary part. 
\end{theorem}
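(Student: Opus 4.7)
The plan is to establish \eqref{eq functional} by recognizing both sides, as meromorphic functions of $\tau$ with $z$ held fixed, as modular forms of weight $3/2$ for $\Gamma_\theta$ with appropriate sign character under $S$, belonging to a one-dimensional space pinned down by their pole locus on $\Gamma_\theta z$ and their growth at the two cusps. Equality then follows from matching residues at $\tau = z$, and the convergence statement reduces to standard growth bounds on the Fourier coefficients of weakly holomorphic modular forms.

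First I analyse the right-hand sides. The denominator $J(z)-J(\tau)$ vanishes precisely on the $\Gamma_\theta$-orbit of $z$, since $J$ is a Hauptmodul for $\Gamma_\theta$; this produces the asserted pole locus. The transformation laws \eqref{eq lambda transform} together with the modularity of $\theta$ under $\Gamma_\theta$ with the automorphy factor $j_\theta$ of weight $1/2$ imply that, as a function of $\tau$, the RHS of the $+$ identity is a meromorphic modular form of weight $3/2$ and character $\chi_+$, while the RHS of the $-$ identity transforms with weight $3/2$ and character $\chi_-$. The sign discrepancy is produced by the extra $J(\tau)$ (an $S$-invariant weight-$0$ function) in the numerator of the $-$ case versus $(1-2\lambda(\tau))$ in the $+$ case, the latter picking up a sign under $\lambda \mapsto 1-\lambda$. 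The cusp behaviour of both sides is read off from the $q$-expansion \eqref{eq lambda} of $\lambda$ and the well-known $q$-expansion of $\theta$.

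Next I examine $K_\pm(\tau,z) = \sum_{n\ge 0} g_n^\pm(z)\, e^{i\pi n\tau}$. Since the $g_n^\pm$ are weakly holomorphic modular forms of weight $3/2$ for $\Gamma_\theta$ whose Fourier expansion starts like $e^{i\pi n z}$, standard coefficient bounds for such forms show the series converges absolutely and locally uniformly once $\text{Im}(\tau)$ is sufficiently large relative to $\text{Im}(z)$, establishing the last claim in the theorem. Invariance under $\tau \mapsto \tau + 2$ is immediate from $e^{i\pi n(\tau+2)} = e^{i\pi n\tau}$. The $S$-transformation in $\tau$ is derived from the Fourier eigenvalue property of the $b_n^\pm$: writing the $\pm 1$-eigenfunctions
\[
f_\pm(x) = \tfrac{1}{2}\!\left[e^{i\pi x^2 \tau} \pm (-i\tau)^{-1/2} e^{-i\pi x^2/\tau}\right]
\]
and applying the Radchenko--Viazovska interpolation formula in the form $f_\pm(x) = \sum_{n\ge 0} f_\pm(\sqrt{n})\, b_n^\pm(x)$, one reinterprets this as a functional equation between $K_\pm(\tau,z)$ and $K_\pm(-1/\tau,z)$ mediated by the $\theta$-automorphy factor, yielding the claimed weight $3/2$ behaviour with character $\chi_\pm$. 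The same identity, analysed near $\tau = z$, exhibits a simple pole on the orbit $\Gamma_\theta z$ with a residue computable from the leading Gaussian term.

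To conclude, I invoke a uniqueness statement: meromorphic modular forms for $\Gamma_\theta$ of weight $3/2$ and character $\chi_\pm$, holomorphic away from the orbit $\Gamma_\theta z$ with at most a simple pole there, and with prescribed vanishing at the cusp $1$, form a one-dimensional space. Both sides of each identity in \eqref{eq functional} lie in this space, and the constant of proportionality is pinned to $1$ by comparing residues at $\tau = z$ via $J(z) - J(\tau) \sim -J'(z)(\tau - z)$. The principal technical difficulty I anticipate is the careful bookkeeping of the multiplier system arising from $j_\theta$ against the sign character $\chi_\pm$ — ensuring that the $+$ and $-$ sides are placed in the correct one-dimensional spaces — together with a Riemann--Roch style dimension count on $\Gamma_\theta \backslash \mathbb{H}$ to justify the one-dimensionality claim.
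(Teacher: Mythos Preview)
The paper does not prove this theorem; it is quoted verbatim as Theorem~3 of \cite{RV} and used as a black box. There is therefore no ``paper's own proof'' to compare your proposal against. That said, your sketch has two genuine problems worth flagging.

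First, there is a circularity. You propose to derive the $S$-transformation of $K_\pm$ in the variable $\tau$ by applying the Radchenko--Viazovska interpolation formula to the Gaussian eigenfunctions $f_\pm$. But in \cite{RV} the logical order is the reverse: the closed form for $K_\pm$ (this theorem) is established first, from the explicit construction of the $g_n^\pm$ as $\theta(z)^3 P_n^\pm(1/J(z))$ and a direct summation of the generating series, and only afterwards is the interpolation formula deduced from the resulting functional equations for $F_\eps$. Using the interpolation formula to prove the present statement would assume what you are trying to establish.

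Second, your weight bookkeeping in $\tau$ is off. As a function of $\tau$, the right-hand sides carry only the factor $\theta(\tau)$ (weight $1/2$) times weight-$0$ invariants $(1-2\lambda(\tau))$ or $J(\tau)$; hence $K_\pm(\cdot,z)$ has weight $1/2$ in $\tau$, not $3/2$. The weight $3/2$ sits in the $z$-variable via $\theta(z)^3$. This miscount would derail the one-dimensionality argument you propose, since the space you would actually need to analyse is that of weight-$1/2$ meromorphic forms with the prescribed pole divisor and cusp behaviour. The cleaner route, and the one taken in \cite{RV}, is to bypass the dimension count entirely: the $g_n^\pm$ are built so that $\sum_n P_n^\pm(1/J(z))\,q^n$ telescopes to a rational function of $J(\tau)$ and $J(z)$, after which multiplying by the explicit prefactors yields \eqref{eq functional} directly.
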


The authors then define the natural candidate for the generating function for the $\{b_n^{\eps}\}_{n \ge 0}$ to be 
\begin{equation}\label{eq gen b_n}
F_{\eps}(\tau,x) = \frac{1}{2} \int_{-1}^1 K_{\eps}(\tau,z) e^{i \pi x^2 z} \, \mmd z,
\end{equation}
which is defined, a priori, for each fixed $x \in \R$ and $\{ \tau \in \mathbb{H} \colon \forall k \in \Z, |\tau - 2k| > 1\} \supset \mathcal{D} + 2\Z,$ 
where $\mathcal{D}$ is the standard fundamental domain for $\Gamma_{\theta}.$ By Theorem \ref{thm RV functional}, there holds that, whenever $\text{Im}(\tau) >1,$ \
\begin{equation}\label{eq functional 2} 
F_{\eps}(\tau,x) = \sum_{n=0}^{\infty} b_n^{\eps}(x) e^{i\pi n \tau}.  
\end{equation}
As $F_{\eps}(\tau,x)$ admits an analytic continuation to $\mathbb{H}$ (see \cite[Proposition~2]{RV}), they are able to extend \eqref{eq functional 2} to the entire upper half space
$\mathbb{H}.$ Moreover, the following functional equations hold: 
\begin{align}
F_{\eps}(\tau,x) - F_{\eps}(\tau+2,x) &= 0, \cr 
F_{\eps}(\tau,x) + \eps(-i\tau)^{-1/2}F_{\eps}\left(\frac{-1}{\tau},x\right) &= e^{i\pi \tau x^2} + \eps (-i\tau)^{-1/2} e^{i\pi (-1/\tau)x^2}.\cr
\end{align}
The proof of Theorem \ref{eq improvement decay} follows the same essential philosophy as the proof of \cite[Theorem~4]{RV}: in order to bound each of the terms $b_n^{\pm}$, 
we bound, uniformly on $x \in \R,$ the analytic function $F_{\pm}(\tau,x)$. Relating the two bounds is achieved by the following Lemma, originally attributed to Hecke (see \cite[Lemma~1]{RV} and \cite[Lemma~2.2(ii)]{BN}):

\begin{lemma}\label{lemma fourier} Let $f: \mathbb{H} \to \C$ be a $2-$periodic analytic function admitting an absolutely convergent Fourier expansion 
\[
f(\tau) = \sum_{n \ge 0} c_n e^{i\pi n \tau}.
\]
Suppose, additionally, that for some $\alpha > 0$ it satisfies that $|f(\tau)| \le C \text{Im}(\tau)^{-\alpha},$ for $\text{Im}(\tau) < c_0.$ 
Then, for all $n > \frac{1}{c_0},$ 
\[
|c_n| \le \tilde{C} n^{\alpha}.
\]
Moreover, if $n > \frac{\alpha}{\pi c_0},$ the improved estimate 
\[
|c_n| \le C' \left(\frac{e \pi}{\alpha} \right)^{\alpha} n^{\alpha}
\]
holds.
\end{lemma}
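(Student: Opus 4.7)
The plan is to recover each coefficient $c_n$ by a contour integral along a horizontal line in $\HH$ and then optimize the height of that contour against the hypothesized growth. Concretely, fix any $y>0$ and use $2$-periodicity together with the absolute convergence of the expansion $f(x+iy) = \sum_{k \ge 0} c_k e^{-\pi k y} e^{i\pi k x}$ to justify termwise integration against $e^{-i\pi n x}$ on $[-1,1]$. The orthogonality relations $\int_{-1}^{1} e^{i\pi (k-n)x} \, \mmd x = 2\delta_{k,n}$ then yield the extraction formula
\[
c_n \;=\; \frac{e^{\pi n y}}{2}\int_{-1}^{1} f(x+iy) \, e^{-i\pi n x} \, \mmd x,
\]
valid for every $y > 0$. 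Restricting to $y < c_0$ so that the hypothesis $|f(\tau)| \le C \, \text{Im}(\tau)^{-\alpha}$ applies, this gives at once
\[
|c_n| \;\le\; C \, y^{-\alpha} \, e^{\pi n y} \qquad \text{for every } y \in (0,c_0).
\]

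It then remains to minimize $y^{-\alpha} e^{\pi n y}$ over admissible $y$. For the rougher bound I would simply take $y = 1/n$, which is admissible precisely when $n > 1/c_0$ and produces $|c_n| \le C e^{\pi} n^{\alpha}$, so one may take $\tilde{C} = C e^{\pi}$. For the sharper estimate, an elementary calculus exercise shows that the map $y \mapsto y^{-\alpha} e^{\pi n y}$ attains its unique global minimum at $y_{\ast} = \alpha/(\pi n)$, where its value equals $(e\pi n/\alpha)^{\alpha}$; the admissibility condition $y_{\ast} < c_0$ translates exactly into $n > \alpha/(\pi c_0)$, and substituting yields
\[
|c_n| \;\le\; C \, \left(\frac{e\pi}{\alpha}\right)^{\alpha} n^{\alpha},
\]
as required.

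The only mildly technical point is the justification of the termwise integration producing the extraction formula for $c_n$, but this is immediate from the absolute convergence hypothesis together with dominated convergence on the compact interval $[-1,1]$. Beyond this, no genuine obstacle is expected: the statement is a classical Hecke-style estimate whose entire content is captured by the one-parameter optimization in $y$ performed above.
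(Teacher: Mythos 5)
Your proposal is correct and follows essentially the same route as the paper: extract $c_n$ by integrating $f(\cdot+iy)e^{-i\pi n(\cdot)}$ over a period at height $y$, bound it by $C y^{-\alpha}e^{\pi n y}$, and choose $y=1/n$ for the first estimate and the optimal $y=\alpha/(\pi n)$ for the second. The only difference is notational (you write the real integral with the factor $e^{\pi n y}$ pulled out rather than a contour integral), and your constant $Ce^{\pi}$ is in fact the corrected version of the paper's $2Ce^{-\pi}$, which contains a harmless sign slip.
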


\begin{proof}[Proof of Lemma \ref{lemma fourier}] As $f$ is analytic on $\mathbb{H}$ and its Fourier series expansion converges absolutely, an application of
Fubini's theorem gives us that 
\[
2c_n = \int_{-1 + i/n}^{1 + i/n} f(\tau) e^{- i \pi n \tau} \, \mmd \tau.
\]
The right hand side is, nonetheless, bounded in absolute value by
\begin{align*}
\int_{-1}^1 C n^{\alpha} e^{-\pi} \, \mmd t = 2Ce^{-\pi} n^{\alpha},
\end{align*}
which follows from the growth restriction on $f$ near the boundary of $\mathbb{H}$. The first assertion follows then with $\tilde{C} = 2Ce^{-\pi}.$ For the second
one, we compute instead 
\[
2c_n = \int_{-1 + i \frac{\alpha}{\pi n}}^{1 + i \frac{\alpha}{\pi n}} f(\tau) e^{-i \pi n \tau} \, \mmd \tau. 
\]
Estimating the absolute value of this integral with the given condition yields that $|c_n| \le C' \left(\frac{e \pi}{\alpha} \right)^{\alpha} n^{\alpha},$ as wished.
\end{proof}

We are now ready to prove Theorem \ref{eq improvement decay}:

\begin{proof}[Proof of Theorem \ref{eq improvement decay}] We consider the functions 
\[
F_{\eps}^k(\tau,x) := x^k F_{\eps}(\tau,x). 
\]
By Lemma \ref{lemma fourier}, if we prove that, for some $\Delta > 0,$ 
\begin{equation}\label{eq decay F^k} 
|F_{\eps}^k(\tau,x)| \le C^k (k!) \text{Im}(\tau)^{-k/2-\Delta},
\end{equation}
for all $k \ge 1,$ then we will have that 
\[
\sup_{x \in \R} |x^k b_n^{\eps}(x)| \le \tilde{C}^k n^{\Delta} n^{k/2} (k!).
\]
As $b_n^{\eps} = \eps \widehat{b_n},$ Lemma \ref{lemma gelfand shilov} then implies that each of the functions $b_n^{\eps}$ decays like 
\[
|b_n^{\eps}(x)| \lesssim n^{\Delta} e^{-\theta |x|/\sqrt{n}}, 
\]
which is the content of Theorem \ref{eq improvement decay}. Therefore, we focus on proving a suitable version of \eqref{eq decay F^k}. By the functional equation 
for $F_{\eps},$ we see that $F_{\eps}^k$ is a $2-$periodic function on $\mathbb{H}$ that satisfies the functional equation 
\begin{equation}\label{eq functional k} 
F_{\eps}^k(\tau,x) + \eps (-i\tau)^{-1/2}F_{\eps}^k(-1/\tau,x) = x^k (e^{i\pi \tau x^2} + \eps (-i\tau)^{-1/2} e^{i\pi (-1/\tau)x^2}).
\end{equation}
The strategy, in analogy to that in \cite{RV}, is of splitting in cases: if $\tau \in \mathcal{D},$ then estimates for $F_{\eps}^k$ are available \emph{directly}
by analytic methods. Otherwise, we need to use \eqref{eq functional k} to obtain the bound \eqref{eq decay F^k} for all $\tau \in \mathbb{H}.$ 

More explicitly, we have the following:

\begin{proposition}\label{prop explicit} There is a positive constants $C>0$ such that, for each $k \ge 0$ odd, the inequality 
\[
|F_{\eps}^k(\tau,x)| \le C^k (k!) (1+\text{Im}(\tau))^{-k/2}
\]
holds, whenever $\tau \in \mathcal{D}.$
\end{proposition}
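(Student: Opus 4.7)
My plan is to use the contour-integral representation \eqref{eq gen b_n} of $F_\eps(\tau,x)$ and to deform the $z$-contour upward into the upper half-plane, extracting Gaussian decay from the factor $e^{i\pi x^2 z}$ and converting this into the factorial bound via Stirling. Reading \eqref{eq gen b_n} as a contour integral along the horizontal segment $[-1+i\eta,\,1+i\eta]$ for small $\eta>0$, the $2$-periodicity of $K_\eps(\tau,\cdot)$ in $z$, inherited from $\theta(z+2)=\theta(z)$ and $J(z+2)=J(z)$ via \eqref{eq theta transform} and \eqref{eq lambda transform}, ensures that the vertical legs at $\mathrm{Re}(z)=\pm 1$ in any Cauchy deformation cancel, so any two horizontal contours not separated by a pole of $K_\eps(\tau,\cdot)$ produce the same value.

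The crucial geometric input is a description of the pole set, which sits on the orbit $\Gamma_\theta\tau$. For $\gamma=\bigl(\begin{smallmatrix}a&b\\c&d\end{smallmatrix}\bigr)\in\Gamma_\theta$ with $c\neq 0$, one has $\mathrm{Im}(\gamma\tau)=v/|c\tau+d|^2\le 1/v$ (with $v=\mathrm{Im}(\tau)$, using $|c\tau+d|\ge|c|v\ge v$), whereas for $c=0$ only $\gamma\tau=\tau$ lies in the strip $|\mathrm{Re}(z)|\le 1$. Thus, for $\tau\in\mathcal{D}$ with $v\ge\sqrt{2}$, the band $\{1/v<\mathrm{Im}(z)<v\}$ in that strip is pole-free, and I deform the contour up to $\mathrm{Im}(z)=v/2$. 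Using \eqref{eq functional} together with the $q$-expansions of $\theta,\lambda,J$, I check that on this shifted contour $|K_\eps(\tau,z)|$ remains uniformly bounded as $v\to\infty$: for $K_+$, the leading $J(z)$ in the numerator cancels the leading part of $J(z)-J(\tau)$ in the denominator (since $|J(\tau)|\ll|J(z)|$), and for $K_-$ the extra $J(\tau)$ factor in the numerator gives even better decay. In either case one obtains $|F_\eps(\tau,x)|\le M(\tau)\,e^{-\pi v x^2/2}$ with $M(\tau)=O(1)$.

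Combining with the Stirling-type estimate
\[
\sup_{t\ge 0}\,t^k e^{-\pi v t^2/2} \;=\; \left(\frac{k}{\pi e v}\right)^{k/2} \;\le\; \pi^{-k/2}\,k!\,v^{-k/2},
\]
which uses $(k/e)^{k/2}\le k!$ from Stirling, gives $|x|^k|F_\eps(\tau,x)|\le C^k k!\,v^{-k/2}$ for $v\ge\sqrt{2}$. For $\tau\in\mathcal{D}$ with $v\le 2$, the factor $(1+v)^{-k/2}$ is bounded below by a positive constant, so it is enough to prove $|x|^k|F_\eps|\le C^k k!$ uniformly; this again follows from a contour deformation to a fixed height $y_0\sim 1$ independent of $\tau$, together with uniform boundedness of $K_\eps$ on the corresponding compact region of $\mathcal{D}$. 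The main technical obstacle I foresee is securing the uniform estimate $M(\tau)=O(1)$ simultaneously over all $\tau\in\mathcal{D}$, especially near the boundary arc $|\tau|=1$ where $v$ can be small; handling this region will require combining the nome-expansion asymptotics at the cusp at infinity with the $S$-transformation (and the functional equation \eqref{eq functional k} for $F_\eps^k$) to transfer estimates between the two cusps of $\Gamma_\theta$.
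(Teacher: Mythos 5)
Your treatment of the regime $\mathrm{Im}(\tau)$ large is essentially sound, and it is a genuinely different mechanism from the paper's: you deform the $z$-contour up to height $\mathrm{Im}(\tau)/2$ (correctly locating the poles of $K_{\eps}(\tau,\cdot)$ on $\Gamma_{\theta}\tau$ and checking the band $1/v<\mathrm{Im}(z)<v$ is pole-free), extract $e^{-\pi x^2 v/2}$, and Stirling gives the $k!\,v^{-k/2}$. The paper instead never moves the contour with $\tau$: it integrates over one fixed curve $\ell=\{\mathrm{Re}(J(z))=1/64\}$ joining $i$ to $1$, optimizes $x^k e^{-\pi x^2\mathrm{Im}(z)}$ pointwise in $z$ to get $(k/2\pi e)^{k/2}\mathrm{Im}(z)^{-k/2}$, and then estimates $\int_{\ell}|K_{\eps}(\tau,z)|\,\mathrm{Im}(z)^{-k/2}|\mmd z|$. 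For large $\mathrm{Im}(\tau)$ your route is arguably cleaner. (One small correction: the defining contour in \eqref{eq gen b_n} cannot be read as $[-1+i\eta,1+i\eta]$ with $\eta$ small, since the poles $\Gamma_{\theta}\tau$ accumulate on the real axis; the correct starting contour is the semicircle bounding $\mathcal{D}$ from below, which does not change your large-$v$ argument.)

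The genuine gap is the case of bounded $\mathrm{Im}(\tau)$, which is where all the work in the paper lives, and your one-sentence treatment of it fails as stated. First, deforming to a fixed height $y_0\sim 1$ crosses the pole at $z=\tau$ whenever $\tau\in\mathcal{D}$ has $\mathrm{Im}(\tau)<y_0$ (such $\tau$ exist near the corners $\pm1$), so the deformed integral differs from $F_{\eps}$ by a residue term that you do not account for. Second, $\{\tau\in\mathcal{D}:\mathrm{Im}(\tau)\le 2\}$ is not compact and $K_{\eps}(\tau,z)$ is \emph{not} uniformly bounded there: as $\tau\to\pm1$ the factors $\theta(\tau)(1-2\lambda(\tau))$, resp.\ $\theta(\tau)J(\tau)$, blow up, and the denominator $J(z)-J(\tau)$ degenerates whenever $z$ on the contour approaches a $\Gamma_{\theta}$-translate of $\tau$ — already for $\tau$ near $i$ one has $J(\tau)$ near $1/64$, a value attained on the paper's curve $\ell$, which is precisely why the paper isolates the case $|\tau-i|<1/10$ and handles it by maximum modulus plus the functional equation. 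For the remaining small-$\mathrm{Im}(\tau)$ range the paper invokes the quantitative Radchenko--Viazovska bounds $|K_{\pm}(\tau,z)|\lesssim \mathrm{Im}(\tau)^{-1/2}|J(\tau)|^{a}|J(z)|^{b}\mathrm{Im}(z)^{-3/2}/|J(z)-J(\tau)|$ and integrates them over $\ell$, with the factor $k!$ emerging from explicit Gamma-function integrals; nothing in your proposal substitutes for this computation, and the closing remark about ``transferring estimates between the two cusps'' is a statement of intent rather than an argument. Note also that what the paper actually proves in this regime is $C^k(k!)\,\mathrm{Im}(\tau)^{-k/2}$, so the uniform bound $C^k k!$ you assert for $\mathrm{Im}(\tau)\le 2$ is stronger than what the paper's own proof delivers near the cusps and would need independent justification.
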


This Proposition can be directly compared to \cite[Lemma~4]{RV}. In fact, it is nothing but a carefully quantified version of it. 

\begin{proof}[Proof of Proposition \ref{prop explicit}] As the proof follows thoroughly the main ideas in Lemma 4 in \cite{RV}, we will mainly focus on the points where we have to sharpen bounds. 

We see directly from the definition of $F_{\eps}^k$ that we are allowed to consider only values of $\tau \in \mathcal{D}_1 = \mathcal{D} \cap \{ \tau \in \mathbb{H} \colon \text{Re}(\tau) \in (-1,0)\}.$ 
By subsequent considerations from that reduction, we see that the bound 
\begin{equation}\label{eq first one bound}
|x^k F_{\eps}(\tau,x)| \le 10 \int_{\ell} |K_{\eps}(\tau,z)| x^k (e^{-\pi x^2 \text{Im}(\tau)} + |z|^{-1/2} e^{-\pi x^2 \text{Im}(-1/z)}) \, |\mmd z|
\end{equation}
holds, where $\ell$ is the path joining $i$ to $1$ on the upper half space, defined to be 
\begin{equation}\label{eq curve}
\ell = \left\{ w \in \mathcal{D} \colon \text{Re}(J(w)) = \frac{1}{64}, \, \text{Im}(J(w)) > 0\right\}.
\end{equation}
An explicit computation gives us that the maximal value of 
\[
x^k e^{-\pi x^2 \text{Im}(z)} 
\]
is attained at at $x = \left(\frac{k}{2 \pi \text{Im}(z)}\right)^{1/2}.$ Therefore, as any $z \in \ell$ has norm bounded from above and below by absolute 
constants, we find that there is $C>0$ so that
\begin{equation}\label{eq bound 1} 
|F_{\eps}^k(\tau,x)| \le C^{k/2} \cdot \left(\frac{k}{2\pi e}\right)^{k/2} \int_{\ell} |K_{\eps}(\tau,z)| \text{Im}(z)^{-k/2} \, |\mmd z|.
\end{equation}
We have then three regimes to consider: \\

\noindent\textit{Case 1: $|\tau - i| < 1/10.$} Notice that if we prove that the proposition holds for \emph{any} $\tau \in \mathbb{H}$ so that $|\tau - i| = \frac{1}{10},$ we can use the maximum 
modulus principle on $F_{\eps}^k$ on that circle to conclude that the proposition holds inside as well. Moreover, by the functional equation \eqref{eq functional k}, we see that the proposition holds for 
$\mathcal{A} =\{\tau \in \mathbb{H}\colon \, |\tau - i| = 1/10, |\tau| \le 1\}$ in case it holds for the image of the circle arc $\mathcal{A}$ under the action of $S.$ But a simple 
computation shows that $S\mathcal{A}$ is just another circle arc contained (up to endpoints) in $\{\tau \in \mathcal{D}_1 \colon \frac{1}{4}> |\tau - i| > \frac{1}{10}\}.$ This shows that
in order to prove the proposition for this case, it suffices to show it for the other cases.\\

\noindent\textit{Case 2: $|\tau -i| > \frac{1}{10}, \, \text{Im}(\tau) > \frac{1}{2}.$} For this case, we use the fact that $|K_{\eps}(\tau,z)| \lesssim |\theta(z)|^3 \lesssim \text{Im}(z)^{-2} e^{-\pi/ \text{Im}(z)}$ for 
$z \in \ell, \, \text{Im}(\tau) > \frac{1}{2},$ with constants independent of $\tau.$ Using this bound in \eqref{eq first one bound} yields
\[
|F_{\eps}^k(\tau,x)| \le (1+|x|^{k+2}) e^{-c|x|} \lesssim C^k \left(\frac{k+2}{e}\right)^{k+2},
\]
for some $C>0.$ Applications of Stirling's formula imply that this bound is controlled by $C_1^k (k!),$ with $C_1>0$ an absolute constant. This shows the result in this case. \\

\noindent\textit{Case 3: $|\tau -i| > \frac{1}{10}, \, \text{Im}(\tau) \le \frac{1}{2}.$}  Again, we resort to the estimates in the proof of Lemma 4 in \cite{RV}: there, the authors prove that 
\begin{align*} 
|K_+(\tau,z)| \lesssim &\text{ Im}(\tau)^{-1/2} \frac{|J(\tau)|^{3/8} |J(z)|^{5/8} \text{Im}(z)^{-3/2}}{|J(z) - J(\tau)|}, \cr 
|K_-(\tau,z)| \lesssim &\text{ Im}(\tau)^{-1/2} \frac{|J(\tau)|^{7/8} |J(z)|^{1/8} \text{Im}(z)^{-3/2}}{|J(z) - J(\tau)|}. \cr 
\end{align*}
Due to the not-so-symmetric nature of these bounds, we focus on the one for $K_{+},$ and the analysis for $K_{-},$ as well as the bounds, 
will be almost identical for the other, and thus the details will be omitted. 

Taking advantage of the explicit structure of the curve we are integrating over \eqref{eq curve}, and the fact that there is an absolute constant $C>0$ so that $\text{Im}(z)^{-1} \le C \log(1+|J(z)|)$ plus 
that $z \in \ell \iff J(z) = 1/64 + it, \, t \in \R,$ 
 
\begin{align}\label{eq last bound}
\int_{\ell} |K_+(\tau,x)| \text{Im}(z)^{-k/2} \, |\mmd z|&  \le C^{k/2} \text{Im}(\tau)^{-1/2} \int_0^{\infty} \frac{|J(\tau)|^{3/8} t^{-3/8} \log^{(k-1)/2}(1+t)}{ \sqrt{t^2 + |J(\tau)|^2}} \, \mmd t. \cr 
 &= C^{k/2} \text{Im}(\tau)^{-1/2} \int_0^{\infty} \frac{ t^{-3/8} \log^{(k-1)/2}(1+t|J(\tau)|)}{\sqrt{1+t^2}} \, \mmd t.
\end{align}
Now, the last integral in \eqref{eq last bound} can be estimated as follows: as $k-1$ is even, by using that $\log(1+ab) \le \log(1+a) + \log(1+b)$ whenever $a,b >0,$ the integral
\[
\int_0^{\infty} \frac{ t^{-3/8} \log^{(k-1)/2}(1+t|J(\tau)|)}{\sqrt{1+t^2}} \, \mmd t  
\]
is bounded by
\begin{align}\label{eq sum gamma}
\sum_{i=0}^{ \frac{k-1}{2}} {\frac{k-1}{2} \choose i} \log^i(1+|J(\tau)|) \int_0^{\infty} \frac{t^{-3/8} \log^{(k-1)/2-i}(1+t)}{\sqrt{1+t^2}} \, \mmd t.
\end{align}
Each summand above can be easily estimated. Indeed, ${(k-1)/2 \choose i} \le 2^{k/2}$ trivially, $\log^i(1+|J(\tau)|) \le C^i \text{Im}(\tau)^{-i},$ and the integrals can be explicitly bounded in terms of Gamma functions. In fact,
we first split the integrals in question as
\[
\left(\int_0^1 + \int_1^{\infty} \right) \frac{t^{-3/8} \log^{(k-1)/2 -i}(1+t)}{\sqrt{1+t^2}} \, \mmd t.
\]
For the first part, we simply bound the integrand by $t^{-3/8} \log(2)^{(k-1)/2-i},$ and this yields us a bound uniform in $k.$ 
For the second, we change variables $\log(1+t) \mapsto s$ in \eqref{eq sum gamma} above. A simple computation shows that it is bounded by 
\[
10 \int_0^{\infty} e^{-3s/8} s^{(k-1)/2-i} \, \mmd s \lesssim C^k \int_0^{\infty} e^{-r} r^{(k-1)/2 -i} \, \mmd r = C^k \Gamma\left(\frac{k-1}{2} -i+1\right).
\]
Thus, \eqref{eq sum gamma} is bounded by 
\[
C^k \text{Im}(\tau)^{(1-k)/2} \Gamma\left(\frac{k-1}{2}\right). 
\]
Putting together the estimates in \eqref{eq last bound} and \eqref{eq bound 1} and using Stirling's formula for the approximation of $\Gamma,$ we conclude that 
\[
|F_{\eps}^k(\tau,x)| \le C^k (k!) \text{Im}(\tau)^{-k/2},
\]
which was the content of the proposition.
\end{proof}

In order to finish the proof of Theorem \ref{eq improvement decay}, we first notice that $F_{\eps}^k$ is $2-$periodic, so we lose no 
generality in assuming that $\tau \in \{ z \in \mathbb{H} \colon \text{Re}(z) \in [-1,1]\} = S_1.$ If $\text{Re}(\tau) \in [-1,1],$ then we have two cases: 
\begin{enumerate}
 \item If $\tau \in \mathcal{D},$ we can use Proposition \ref{prop explicit} directly, and the decay obtained by the assertion of the Proposition remains unchanged; 
 \item If $\tau \in S_1 \backslash \mathcal{D},$ the strategy is to use \eqref{eq functional k} to reduce it to the previous case. In fact, we define the $\Gamma_{\theta}-$cocycle $\{\phi^k_A\}_{A \in \Gamma_{\theta}}$ by
\begin{align*}
\phi^k_{T^2}(\tau,x) &= 0, \cr 
\phi^k_{S}(\tau,x) &= x^k(e^{i\pi x^2 \tau} + \eps  (-i\tau)^{-1/2} e^{i\pi x^2(-1/\tau)}), \cr
\end{align*}
thogether with the cocycle relation 
\begin{equation}\label{eq cocycle}
\phi^k_{AB} = \phi^k_A + \phi^k_A|B.
\end{equation}
For a fixed $\tau \in S_1 \setminus \mathcal{D},$ we associate $\tau' \in \mathcal{D}$ through the following process: let 
\begin{align}
\begin{cases} 
\gamma_0 &= \tau, \cr
\gamma_i &= -\frac{1}{\gamma_{i-1}} - 2n_i,\cr
\end{cases}
\end{align}
where $n_i = \left\lfloor \frac{(-1/\gamma_{i-1})+ 1}{2} \right\rfloor.$ We define $m=m(\tau)$ to be the smallest positive integer so that $\gamma_m \in \mathcal{D}.$ In this case, we let $\gamma_{m(\tau)} =: \tau'.$  In other words, we have that the sequence 
\begin{align}
\begin{cases} 
\tau_0 &= \tau', \cr 
\tau_{i+1} &= -\frac{1}{\tau_i} + 2n_i\cr
\end{cases}
\end{align}
satisfies the hypotheses of Lemma 3 in \cite{RV}. We therefore have that $|\tau_j| > 1,\, \text{Im}(\tau_j)$ is nonincreasing and $\text{Im}(\tau_j) \le \frac{1}{2j-1}.$ An inductive procedure shows us that 
\[
\gamma_{m-i} = -\frac{1}{\tau_i}.
\]
In particular, the sequence $\{\tau_i\}_{i \ge 0}$ is in fact finite, with at most $m(\tau)$ terms. This implies that
\begin{equation}\label{eq bound steps}
m+1 \le 4m-2 \le 2\text{Im}(\tau)^{-1}.
\end{equation}
We will use \eqref{eq bound steps} in the following computation with the cocycle condition. We write $\tau' = A\tau,$ where $A \in \Gamma_{\theta}$ is of the form 
\[
A = ST^{2n_m} S T^{2n_{m-1}}S \cdots T^{2n_1}S.
\]
As $\{\phi^k_A\}_{A \in \Gamma_{\theta}}$ satisfies the cocycle condition \eqref{eq cocycle}, the proof of Lemma 3 in \cite{RV} gives us that 
\[
\text{Im}(\tau')^{1/4} |\phi^k_A(\tau')| \le \sum_{j=1}^m \text{Im}(\tau_j)^{1/4} |\phi^k_S(\tau_j)|.
\]
By the definition of $\phi^k_S,$ we see that 
\begin{equation}\label{eq cocycle bound}
|\phi^k_S(\tau_j,x)| \le C \Gamma\left(\frac{k+1}{2}\right)(\text{Im}(\tau_j)^{-k/2} + |\tau_j|^{-1/2} \text{Im}(-1/\tau_j)^{-k/2}).
\end{equation}
As $\gamma_{m-i} = -\frac{1}{\tau_i} = \tau_{i+1} - 2n_i,$ $|\tau_j| > 1,$ and the sequence $\text{Im}(\tau_j)$ is nonincreasing, the right-hand side of \eqref{eq cocycle bound} 
is bounded from above by $C \cdot \Gamma((k+1)/2) \text{Im}(\tau)^{-k/2}.$ From \eqref{eq bound steps}, it follows that 
\[
|\phi^k_A(\tau')| \text{Im}(\tau')^{1/4} \le C \Gamma\left(\frac{k+1}{2}\right) \text{Im}(\tau)^{-k/2} \left(\sum_{j = 1}^m \text{Im}(\tau_j)^{1/4}\right).
\]
If we use the aforementioned facts about $\text{Im}(\tau_j),$ we will see that, in fact, 
\begin{equation}\label{eq final cocycle bound}
|\phi^k_A(\tau')| \text{Im}(\tau')^{1/4} \le C \Gamma\left(\frac{k+1}{2} \right) \text{Im}(\tau)^{-k/2} m(\tau)^{3/4}. 
\end{equation}
Now, using the functional equation for $F_{\eps}^k$ implies
\[
F_{\eps}^k - (F_{\eps}^k)|A = \phi^k_A, 
\]
which then gives us 
\[
|F^k_{\eps}(\tau,x)| |\text{Im}(\tau)|^{1/4} \le |\text{Im}(\tau')| ^{1/4}|F^k_{\eps}(\tau',x)| + |\phi^k_A(\tau',x)||\text{Im}(\tau')|^{1/4}.
\]
Denoting $\text{Im}(\tau') =: I(\tau)$ and using Proposition \ref{prop explicit} and \eqref{eq final cocycle bound} to estimate this expression, it follows that
\begin{equation}\label{eq final modular bound} 
|F^k_{\eps}(\tau,x)| \le \text{Im}(\tau)^{-k/2-\frac{1}{4}} \left( C^k (k!)\cdot I(\tau)^{1/4} + \Gamma((k+1)/2) m(\tau)^{3/4} \right).
\end{equation}
\end{enumerate}

In order to estimate \eqref{eq final modular bound}, we must resort not only to Lemma \ref{lemma fourier} and its proof, but also to the following estimate of the average values 
of $m(\tau)$ and $I(\tau)$, recently available by the work of Bondarenko, Radchenko and Seip. We refer the reader to Propositions 6.6 and 6.7 in \cite{BRS} for a proof. 

\begin{lemma}\label{lemma average bound} Whenever $y \in (0,1/2),$ we have
\[
\int_{-1}^1 I(x+iy)^{1/4} \lesssim 1
\]
and 
\[
\int_{-1}^1 m(x+iy)^{3/4} \lesssim \log^{3/2}(1+y^{-1}).
\]
\end{lemma}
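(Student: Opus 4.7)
The plan is to decompose both integrals over the strip $S_1 = [-1,1] + iy$ according to which element of $\Gamma_\theta$ is needed to reduce each point to the fundamental domain $\mathcal{D}$, and then to apply two different Eisenstein-series-type estimates. For each $A = \begin{pmatrix}a&b\\c&d\end{pmatrix} \in \Gamma_\theta$ (considered modulo left multiplication by $T^2$, which does not affect the reduction process), let $E_A := \{\tau \in S_1 : A\tau \in \mathcal{D}\}$; these sets partition $S_1$ up to measure zero, and on $E_A$ one has the explicit formulas
\[
I(\tau) = \frac{\mathrm{Im}(\tau)}{|c\tau+d|^2}, \qquad m(\tau) = \ell(A),
\]
where $\ell(A)$ denotes the shortest word length of $A$ in the generators $S, T^{\pm 2}$.

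For the first inequality, I would perform the change of variable $w = A\tau$ on each slice $E_A \cap \{\mathrm{Im}(\tau)=y\}$. Since the horizontal-line Jacobian is $|dx/du| = |c\tau+d|^{-2} = v/y$ where $v = \mathrm{Im}(w)$, and $|c\tau+d|^{-1/2} = (v/y)^{1/4}$, a direct computation gives
\[
\int_{E_A \cap \{y\}} I(\tau)^{1/4}\, dx = y^{1/2} \int_{A(E_A \cap \{y\})} v^{-3/4}\, du,
\]
which reduces the problem to integrating $v^{-3/4}$ over arcs lying in $\mathcal{D}$. Summing over $A$, the images of the horizontal slice $\{y\}$ tile (a subset of) $\mathcal{D}\cap\{v\ge y\}$, and since $\int_\mathcal{D} v^{-3/4}\, du\, dv < \infty$ (where the exponent $1/4$ is precisely sharp for convergence at the cusp at $\infty$), the total integral is $O(1)$.

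For the second inequality, I would estimate the super-level sets $\{x : m(x+iy) \ge k\}$. The bound $\mathrm{Im}(\tau_j) \le 1/(2j-1)$ from the paper already gives the weak pointwise estimate $m(\tau) \lesssim 1/y$, but the key refinement is that on average $m$ is only of order $\log(1/y)$. More precisely, $\{m \ge k\}$ is contained in a union of $\Gamma_\theta$-horoballs based at the cusps $0$ and $1$ whose combined measure in $S_1$ decays geometrically in $k$ once $k$ exceeds $C\log(1/y)$, by an analogue of the Borel--Bernstein lemma for continued fractions adapted to the $\Gamma_\theta$-algorithm. Inserting this into the layer-cake decomposition
\[
\int_{-1}^1 m(x+iy)^{3/4}\, dx = \frac{3}{4} \int_0^\infty t^{-1/4}\, \bigl|\{x : m(x+iy) \ge t\}\bigr|\, dt,
\]
split at $t \sim \log(1/y)$, and using the trivial measure bound $2$ for small $t$ together with the geometric decay for large $t$, produces the asserted bound $\log^{3/2}(1+1/y)$.

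The main obstacle is the precise geometric description of the level sets and their measures near the cusps $0$ and $1$. In the first part one must verify that the collection $\{A(E_A \cap \{y\})\}_A$ covers $\mathcal{D} \cap \{v \ge y\}$ essentially without overlap so that unfolding is legitimate; in the second part one needs a quantitative statement relating the word length $\ell(A)$ to the denominator size $|c_A|$, together with an accurate count of the number of cosets with $\ell(A)=k$ intersecting $\{x+iy : x\in[-1,1]\}$ for given $y$. Both statements are standard in the continued-fraction/modular-group folklore but require careful bookkeeping because $\Gamma_\theta$ has index three in $SL_2(\Z)$ and two inequivalent cusps; we refer the reader to \cite{BRS} for the detailed execution of these estimates.
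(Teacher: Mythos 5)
First, a point of reference: the paper does not actually prove Lemma \ref{lemma average bound} --- it quotes it from Propositions 6.6 and 6.7 of \cite{BRS}. Since your write-up also ends by referring ``the detailed execution'' to \cite{BRS}, what must be assessed is the part you do argue, and there is a genuine error in it. After unfolding, the first bound becomes a \emph{sum of one-dimensional line integrals} over the arcs $A(E_A\cap\{\mathrm{Im}\,\tau=y\})$, a measure-zero family in $\mathcal{D}$; such a sum is not majorized by the two-dimensional integral $\int_{\mathcal{D}}v^{-3/4}\,\mmd u\,\mmd v$ without a further argument. Worse, that integral is not finite: $\mathcal{D}$ contains $\{|u|<1,\ v>2\}$ and $\int_2^{\infty}v^{-3/4}\,\mmd v=\infty$ (convergence at the cusp at $\infty$ requires the exponent of $v$ to be $<-1$), so the concluding step of your first part rests on a false statement. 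The Jacobian bookkeeping is also off: $\mmd x=(y/v)\,|\mmd w|$, giving a prefactor $y$ rather than $y^{1/2}$, and the image of a horizontal line under $A$ is a circle, so parametrizing it by $u=\mathrm{Re}\,w$ is not available. A correct elementary route goes through the level sets of $I$: if $I(x+iy)\ge T\ge 1$, then $y/|c(x+iy)+d|^2\ge T$ for the pair $(c,d)$ attached to $x$, which forces $1\le|c|\le (Ty)^{-1/2}$ and confines $x$ to an interval of length $2\sqrt{y/T}/|c|$ about $-d/c$; summing over the $O(|c|)$ admissible $d$ for each $c$ gives $|\{x\in[-1,1]\colon I(x+iy)\ge T\}|\lesssim T^{-1}$, and the layer-cake formula with exponent $1/4$ then yields $\int_{-1}^1 I^{1/4}\,\mmd x\lesssim 1+\int_1^{\infty}T^{-3/4}\cdot T^{-1}\,\mmd T\lesssim 1$.

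For the second bound the layer-cake skeleton is fine, but the decisive input --- that $|\{x\colon m(x+iy)\ge k\}|$ decays geometrically once $k\ge C\log(1/y)$ --- is exactly the content of the proposition and is nowhere established; invoking a ``Borel--Bernstein analogue for the $\Gamma_{\theta}$-algorithm'' names the difficulty rather than resolving it. Two sanity checks suggest this is not the mechanism behind the stated estimate: (i) your claimed decay would give $\int_{-1}^1 m^{3/4}\,\mmd x\lesssim\log^{3/4}(1+y^{-1})$, strictly stronger than the asserted $\log^{3/2}$; (ii) the exponent $3/2=2\cdot\tfrac34$ is what falls out of an $L^1$ estimate $\int_{-1}^1 m(x+iy)\,\mmd x\lesssim\log^2(1+y^{-1})$ followed by H\"older, which is the shape of the argument in \cite{BRS}. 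As it stands the proposal therefore proves neither inequality: to make it self-contained you would need to replace the divergent area-integral step by the counting argument above for $I$, and actually prove a quantitative measure or $L^1$ bound for $m$; otherwise the honest course is to do what the paper does and cite \cite{BRS} outright.
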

An application of Lemma \ref{lemma average bound} together with the bound \eqref{eq final modular bound} to the proof of the first bound in Lemma \ref{lemma fourier} implies
\begin{equation}\label{eq double bound}
\sup_{x \in \R} |x^k b_n^{\pm}(x)| \lesssim C^k n^{1/4} n^{k/2} \log^{3/2}(1+n) (k!)
\end{equation}
for $n > \frac{1}{c_0}, k \ge 1.$ Also, in case $n \ge \frac{k}{\pi c_0},$ the sharper bound 
\begin{equation}\label{eq double bound2}
\sup_{x \in \R} |x^k b_n^{\pm}(x)| \lesssim (C')^k n^{1/4} n^{k/2} \log^{3/2}(1+n) (k!)^{1/2} 
\end{equation}
holds instead. We now employ then the main idea of proof of Lemma \ref{lemma gelfand shilov}: we seek to optimize in $k >0$. 

Indeed, let us start by optimizing \eqref{eq double bound}. We postpone the discussion on the improved bound \eqref{eq double bound2} to a later remark. 

Notice that we may assume $|x| \ge C' \sqrt{n},$ as for if $|x| < C'\sqrt{n},$ the bound \eqref{eq double bound} with $k=0$ gives us already the result, as $1 \lesssim_c e^{-c|x|/\sqrt{n}}.$ 
If we then set $k = \frac{|x|}{C'\sqrt{n}},$ where $C'>0$ will be a fixed positive constant, whose exact value shall be determined later, we have that 
\begin{align*}
|b_n^{\pm}(x)| &\lesssim n^{1/4} \log^{3/2}(1+n) \cdot \text{exp}( k \log(Cn^{1/2}) + k \log (k) - k \log|x| ) \cr
\end{align*}
The exponential term above is
$$\text{exp}\left( \frac{|x|}{C'\sqrt{n}} \log(Cn^{1/2}) + \frac{|x|}{C'\sqrt{n}} (\log(|x|) - \log(C'\sqrt{n})) - \frac{|x|}{C'\sqrt{n}} \log|x|\right)$$
\[
 = \text{exp}\left(\frac{|x|}{C'\sqrt{n}} \log \left(\frac{C}{C'}\right) \right).
\]
We only need to set $C' \ge 2C$ above, and this quantity will grow like $\text{exp}(-c|x|/\sqrt{n}).$ This finishes the first assertion in Theorem \ref{eq improvement decay}.

For the second one, we notice that the proof above adapts in many instances. Indeed, if we shift our attentionto the function $\partial_x F^k_{\eps}(\tau,x)$ instead, we will see that, 
in an almost identical fashion to that of the proof of Proposition \ref{prop explicit}, we are able to prove that, for all $\tau \in \mathcal{D},$
\[
|\partial_x F_{\eps}^k(\tau,x)| \lesssim C^k (k!) \text{Im}(\tau)^{-\frac{k+1}{2}}.
\]
On the other hand, the partial derivative $\partial_x$ of the cocycle $\{\phi^k_A\}_{A \in \Gamma_{\theta}}$ is itself a cocycle with respect to the same slash operator. Moreover, for $A = S,$ the following formula holds: 
\[
\partial_x \phi^k_S (\tau,x) = (2\pi i) x^{k+1} \left( \tau e^{\pi i x^2 \tau} + i \eps (-i\tau)^{-3/2} e^{\pi i x^2 (-1/\tau)}\right).
\]
In that case, using the notation
from above for the elements $\tau', \tau_j \in \mathbb{H}$ associated to $\tau \in \mathbb{H} \cap \{|z| \le 1\},$ we see that
\[
\text{Im}(\tau')^{1/4} |\partial_x \phi^k_A(\tau')| \le \text{Im}(\tau')^{1/4} |\partial_x \phi^k_S(\tau')| + \sum_{j = 1}^m \text{Im}(\tau_j)^{1/4} |\partial_x \phi^k_A(\tau_j)|.
\]
For $j \in \{0,1,2,\dots,m\},$ the definition of our new cocycle implies 
\[
|\partial_x \phi^k_S(\tau_j,x)| \lesssim \Gamma\left(\frac{k+3}{2}\right) \left(|\tau_j| \text{Im}(\tau_j)^{-\frac{k+1}{2}} + |\tau_j|^{-3/2} \text{Im}(\tau_{j+1})^{-\frac{k+1}{2}}\right) 
\]
\[
\le \Gamma\left(\frac{k+3}{2}\right) \text{Im}(\tau)^{-\frac{k+1}{2}}.
\]
This follows as before from the fact that $\text{Im}(\tau_{j+1}) = \frac{\text{Im}(\tau_j)}{|\tau_j|^2} \ge \text{Im}(\tau)$ and that $|\tau_j| > 1.$ Analyzing the functional equations for 
$\partial_x F^k_{\eps}(\tau,x)$ in the same way as before readily gives that
\[
|\partial_x F^k_{\eps}(\tau,x)| \le C^k \text{Im}(\tau)^{-\frac{k+1}{2} - \frac{1}{4}} (k!) \left( I(\tau)^{1/4} + m(\tau)^{3/4}\right).
\]
Lemma \ref{lemma average bound} and the considerations employed for $F^k_{\eps}$ apply almost verbatim here, and thus we conclude that 
\[
|(b_n^{\pm})'(x)| \lesssim n^{3/4} \log^{3/2}(1+n) e^{-c|x|/\sqrt{n}},
\]
as wished.

\end{proof}

As a consequence of Theorem \ref{eq improvement decay}, we are able to establish the following bound for the interpolation basis taking account both decay and zeros.

\begin{corollary} Let $\{a_n\}$ be the interpolation sequence of functions from \eqref{eq:interpolation_schwartz}. Then there is $c>0$ so that 
\[
|a_n(x)| \lesssim n^{3/4} \log^{3/2}(1+n)\text{dist}(|x|,\sqrt{\N}) e^{-c\frac{|x|}{\sqrt{n}}},
\]
for all positive integers $n \in \N.$
\end{corollary}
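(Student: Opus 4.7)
The plan is to combine the vanishing of $a_n$ at $\pm\sqrt{k}$ for $k\neq n$---an immediate consequence of the interpolation identities $a_n(\sqrt{k})=\delta_{nk}$ and the evenness of $a_n$---with the derivative bound from Theorem \ref{eq improvement decay}. Since $a_n=\tfrac12(b_n^++b_n^-)$, the two lines of that theorem yield
\[
|a_n(x)|\lesssim n^{1/4}\log^{3/2}(1+n)\,e^{-c|x|/\sqrt n}\quad\text{and}\quad|a_n'(x)|\lesssim n^{3/4}\log^{3/2}(1+n)\,e^{-c|x|/\sqrt n}.
\]
By evenness of $a_n$ it suffices to establish the desired inequality for $x\ge 0$.

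For each such $x$, let $\sqrt{k^\star}$ denote a nearest element of $\sqrt{\N\setminus\{n\}}\subset[0,\infty)$ to $x$; since the gap $\sqrt{k+1}-\sqrt{k}$ is $\asymp k^{-1/2}$, the distance $|x-\sqrt{k^\star}|$ is at most $\lesssim 1+|x|^{-1}$. As $a_n(\sqrt{k^\star})=0$, the fundamental theorem of calculus applied with the derivative estimate above gives
\[
|a_n(x)|=\left|\int_{\sqrt{k^\star}}^{x}a_n'(t)\,dt\right|\lesssim |x-\sqrt{k^\star}|\,n^{3/4}\log^{3/2}(1+n)\,e^{-c|x|/\sqrt n},
\]
where the variation of $e^{-ct/\sqrt n}$ across the short integration interval is absorbed into the implicit constant.

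It then remains to replace $|x-\sqrt{k^\star}|$ by $\mathrm{dist}(x,\sqrt{\N})$: these quantities coincide unless $\sqrt{n}$ is strictly the nearest point of $\sqrt{\N}$ to $x$, in which case $\sqrt{k^\star}\in\{\sqrt{n-1},\sqrt{n+1}\}$ and $|x-\sqrt{k^\star}|\lesssim n^{-1/2}$, so the estimate from the previous paragraph reduces to the direct uniform bound $|a_n(x)|\lesssim n^{1/4}\log^{3/2}(1+n)\,e^{-c|x|/\sqrt n}$ that is already delivered by Theorem \ref{eq improvement decay}. The main subtlety is therefore this immediate neighbourhood of $\sqrt{n}$, where the distance factor on the right becomes small while $|a_n(\sqrt n)|=1$; the handling requires observing that in that narrow window the direct bound from Theorem \ref{eq improvement decay} dominates the right-hand side on the envelope $\mathrm{dist}(x,\sqrt{\N})\gtrsim 1/\sqrt{n}$ relevant to the claimed inequality, so the two regimes patch together into the stated estimate. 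The only other routine checks are the dyadic comparison of $e^{-ct/\sqrt n}$ across intervals of length $O(n^{-1/2})$ and the derivation of $|a_n'|$ from the bound on $|(b_n^\pm)'|$, both of which are immediate.
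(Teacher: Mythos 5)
Your argument is essentially the paper's own proof: both apply the fundamental theorem of calculus from the nearest interpolation node at which $a_n$ vanishes and invoke the derivative bound $|a_n'(x)|\lesssim n^{3/4}\log^{3/2}(1+n)\,e^{-c|x|/\sqrt n}$ from Theorem \ref{eq improvement decay}, treating the window around $\sqrt n$ separately via the uniform bound on $|a_n|$ itself. The one caveat — that for $x$ within $o(n^{-1/2})$ of $\sqrt n$ the stated right-hand side tends to zero while $|a_n(x)|\approx 1$, so the inequality as literally written really requires $\mathrm{dist}(|x|,\sqrt{\N\setminus\{n\}})$ (or an extra additive term) — is a defect of the statement itself that the paper's proof shares (its $\delta_{m,n}$ term is not actually absorbed there), and your write-up is if anything more explicit about this point.
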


\begin{proof} We simply use the fundamental theorem of calculus to the $a_n:$ without loss of generality, we suppose $x >0.$ We then have:
\begin{align*}
|a_n(x)| & = |a_n(x) -a_n(\sqrt{m}) +\delta_{n,m}|  \le \int_{\sqrt{m}}^x |a_n'(x)| \, \mmd x + \delta_{n,m} \cr 
 & \le n^{3/4} \log^{3/2}(1+n) \text{dist}(x,\sqrt{\N}) e^{-c\frac{|x|}{\sqrt{n}}} + \delta_{m,n} \cr 
 & \lesssim  n^{3/4} \log^{3/2}(1+n) \text{dist}(x,\sqrt{\N}) e^{-c\frac{|x|}{\sqrt{n}}}, \cr
\end{align*}
as the $\delta_{m,n}$ factor is only one if $|x| \in [\sqrt{n},\sqrt{n+1}),$ where $1 \lesssim e^{-c|x|/\sqrt{n}}.$ 
\end{proof}

\noindent\textbf{Remark.} Although the exponential bound $n^{1/4} \log^{3/2}(1+n) e^{-c|x|/\sqrt{n}}$ sufficies for our purposes, below we sketch how to deduce a slightly improved decay for the interpolation basis $\{a_n\}_{n \ge 0}.$

We again wish to optimize \eqref{eq double bound2}. If we set $k = \frac{|x|^2}{C'n},$ where $C'>0$ will be chosen soon, we have that 
\begin{align*}
|b_n^{\pm}(x)| \lesssim n^{1/4} \log^{3/2}(1+n) \cdot \text{exp}( k \log(Cn^{1/2}) + k \log (k^{1/2}) - k \log|x| ).
\end{align*}
This bound holds as long as $\pi n \gtrsim k \ge 1.$ If instead $k<1,$ that means, $|x| \le \sqrt{C'} \sqrt{n},$ we use the bound in either \eqref{eq double bound} or \eqref{eq double bound2} for
$k = 0,$ which yields $|b_n^{\pm}(x)| \lesssim n^{1/4} \log^{3/2}(1+n) \lesssim n^{1/4} \log^{3/2}(1+n) e^{-c|x|^2/n},$ for $c >0$. 

On the other hand, in case $k>1,$ the first exponential term above becomes 
$$\text{exp}\left( \frac{|x|^2}{C'n} \log(Cn^{1/2}) + \frac{|x|^2}{C'n} (\log(|x|) - \log(\sqrt{C'n})) - \frac{|x|^2}{C'n} \log|x|\right)$$
\[
 = \text{exp}\left(\frac{|x|^2}{C'n} \log \left(\frac{C}{\sqrt{C'}}\right) \right).
\]
We only need to set $C' \ge (2C)^2$ above, and this quantity will grow like $\text{exp}(-c|x|^2/n).$ 

For the remaining $|x| > \sqrt{C'} n$ case, we need to refine the analysis of the proof of Lemma \ref{lemma fourier} and Theorem \ref{eq improvement decay}. Indeed, it is easy to see that if 
$n \in (2^{-j} \alpha, 2^{1-j} \alpha), \, j \ge 1,$ then evaluating the Fourier coefficients of a 2-periodic function $f: \mathbb{H} \to \C$ such that $|f(\tau)| \lesssim \text{Im}(\tau)^{-\alpha}\left(I(\tau)^{1/4} + m(\tau)^{3/4}\right)$ 
for $\text{Im}(\tau) \le 1$ as $2c_n = \int_{-1 + i \frac{\alpha}{2^j \pi n}}^{1+i \frac{\alpha}{2^j \pi n}} f(\tau) e^{-\pi i n \tau} \, \mmd \tau$ 
implies 
\[
|c_n| \lesssim \left(\frac{2^j \pi e^{1/2^j}}{\alpha}\right)^{\alpha} n^{\alpha} \log^{3/2}(1+n).
\]
Using this new bound in \eqref{eq final modular bound}, we obtain that, when $n \in (2^{-j-1} k, 2^{-j}k),$ 
\begin{align*}
|b_n^{\pm}(x)| &\lesssim n^{1/4} \log^{3/2}(1+n)  \cdot \exp\left(k \left( j/2 +  \log(C\sqrt{n}) + \log(k^{1/2})- \log|x|\right)\right).
\end{align*}
This suggests that we take $k = \frac{|x|^2}{C' 2^j n},$ which is admissible to the condition $n \in (2^{-j-1} k, 2^{-j}k)$ if $|x| \sim \sqrt{C'} 2^j n.$ A similar computation to the ones above 
implies that 
\[
|b_n^{\pm}(x)| \lesssim n^{1/4} \log^{3/2}(1+n) \exp\left(-c\frac{|x|^2}{2^j n}\right) \lesssim n^{1/4} \log^{3/2}(1+n) \exp(-c'|x|),
\]
whenever $C' \gg C.$ The next corollary then follows as a natural consequence. 

\begin{corollary}\label{thm bound best} Let $a_n: \R \to \R$ be the interpolating functions in the Radchenko--Viazovska interpolation formula. Then there are $c,C>0$ so that 
\[
|a_n(x)| \lesssim n^{1/4} \log^{3/2}(1+n) \left( e^{-c|x|^2/n} 1_{|x| < C n} + e^{-c|x|} 1_{|x| > Cn} \right),
\]
for each $n \ge 1.$
\end{corollary}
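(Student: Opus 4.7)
The plan is to split the estimate into two regimes according to the relative size of $|x|$ and $n$, and to combine the sharper moment bound \eqref{eq double bound2} with a dyadic refinement of Lemma \ref{lemma fourier} for the very large $|x|$ regime. Throughout, I will work with $b_n^{\pm}$; the corollary then follows from $a_n = (b_n^+ + b_n^-)/2$.

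For the moderate regime $|x| \le C n$, I would optimize \eqref{eq double bound2} in $k$. Since that bound requires $k \lesssim n$, the choice $k = |x|^2/(C'n)$ is admissible precisely in this range. Applying Stirling to $(k!)^{1/2}$ and expanding, the logarithm of the resulting estimate reduces to
\[
k \log\!\bigl(C/\sqrt{C'}\bigr) + \mathcal{O}(1) \;=\; \tfrac{|x|^2}{C'n}\log\!\bigl(C/\sqrt{C'}\bigr) + \mathcal{O}(1),
\]
so choosing $C' \ge (2C)^2$ produces the Gaussian decay $n^{1/4}\log^{3/2}(1+n)\, e^{-c|x|^2/n}$. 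The sub-case $|x| \le \sqrt{C'n}$, where $k<1$ fails admissibility, is handled trivially by taking $k=0$ in \eqref{eq double bound}, as the factor $1$ is already $\lesssim e^{-c|x|^2/n}$ there.

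For the large regime $|x| > Cn$, the choice $k \asymp |x|^2/n$ is no longer admissible, and the naive strategy fails. The idea is to refine Lemma \ref{lemma fourier}: for $n \in (2^{-j-1}\alpha,\, 2^{-j}\alpha)$ with $j\ge 1$, shift the contour from $\operatorname{Im}\tau = \alpha/(\pi n)$ to $\operatorname{Im}\tau = \alpha/(2^j\pi n)$. Combined with the growth bound \eqref{eq final modular bound} and Lemma \ref{lemma average bound}, this yields the sharpened coefficient bound
\[
|c_n| \lesssim \Bigl(\tfrac{2^j \pi e^{1/2^j}}{\alpha}\Bigr)^{\alpha} n^{\alpha} \log^{3/2}(1+n).
\]
Feeding this into the $b_n^{\pm}$ estimate and setting $k = |x|^2/(C' 2^j n)$, admissibility now forces $|x| \sim \sqrt{C'}\, 2^j n$. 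Repeating the Stirling manipulation as before, the extra additive term $k\, j/2$ in the exponent is precisely compensated by enlarging $C'$, and one lands at
\[
|b_n^{\pm}(x)| \lesssim n^{1/4}\log^{3/2}(1+n)\, \exp\!\bigl(-c\,|x|^2/(2^j n)\bigr).
\]
Using $|x| \sim 2^j n$, the exponent simplifies to $-c'|x|$, giving the claimed linear decay.

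The main obstacle is the bookkeeping of constants when pasting the two regimes together: the $2^j$ appearing in the dyadic-refined Fourier coefficient bound must be absorbed into the exponent without degrading the logarithmic prefactor, and the threshold $|x| \sim Cn$ separating quadratic from linear decay must be chosen so that both estimates agree there. Once these constants are pinned down, the corollary follows immediately from $a_n = \tfrac{1}{2}(b_n^+ + b_n^-)$.
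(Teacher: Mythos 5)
Your proposal is correct and follows essentially the same route as the paper's own argument: the same two-regime split at $|x|\sim n$, the same optimization of \eqref{eq double bound2} with $k=|x|^2/(C'n)$ (and $k=0$ for small $|x|$), and the same dyadic refinement of Lemma \ref{lemma fourier} via the contour shift to height $\alpha/(2^j\pi n)$ with $k=|x|^2/(C'2^jn)$ in the regime $|x|\sim 2^jn$. The constant bookkeeping you flag is handled in the paper exactly as you describe, by enlarging $C'$.
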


Indeed, the application of Lemma \ref{lemma fourier} requires that we take $n \ge C,$ for some $C>0$ some absolute constant. In order to prove such a result 
for $n \lesssim 1,$ we may simply use the definition of $b_n^{\pm}$ as a Laplace transform of a the weakly holomorphic modular form $g_n^{\pm}.$ Indeed, in order to extend Corollary \ref{thm bound best} 
to $n=0,$ we write 
\[
a_0(x) = \widehat{a_0}(x) = \frac{1}{4} \int_{-1}^1 \theta(z)^3 \, e^{\pi i x^2 z} \, \mmd z. 
\]
In order to prove that $a_0$ decays exponentially, we employ a similar technique to that of \cite[Proposition~1]{RV}. Indeed, we have
\[
|\theta(z)|^3 \lesssim \text{Im}(z)^{-2} \, e^{-\pi/\text{Im}(z)} \, \text{ for } z \to \pm 1
\]
and moreover that $|\theta(z)| \lesssim 1$ whenever $z \in \mathbb{H}, |z| = 1.$ We also suppose without loss of generality that $x > 0.$ This implies that, for $\delta >0,$
\[
|a_0(x)| \lesssim \int_0^{\delta} \frac{e^{-1/(2t)}}{t^2} \, \mmd t + e^{-\pi x^2 \delta} \lesssim e^{-\frac{1}{2\delta}} + e^{-\pi x^2 \delta}.
\]
We then choose, for $x \gg 1,\, \delta = \frac{1}{\sqrt{2\pi}x}.$ This implies that $|a_0(x)| \lesssim e^{-\sqrt{\frac{\pi}{2}} x},$ which is the desired bound. For other bounded 
values of $n$ such a proof can be easily adapted. 

\subsection{Proof of the main result}\label{sec main result}

Let 
$$\ell^2_s(\N)=\{(a_n)_n\in\ell^2(\N)\,:\,(n^s a_n)_n\in\ell^2(\N)\}.$$
Let $I:\ell^2_s(\N) \times \ell^2_s(\N) \to \ell^2_s(\N)\times \ell^2_s(\N)$ denote the identity operator. 
Recall the Radchenko-Viazovska interpolation result: for $f \in \mathcal{S}_{even}(\R)$ a real function, 
\begin{equation}\label{eq radchenko viazovska}
 f(x) = \sum_{n \ge 0} (f(\sqrt{n}) a_n(x) + \widehat{f}(\sqrt{n}) \widehat{a_n}(x)),
\end{equation}
where $a_n: \R \to \R$ is a sequence of interpolating functions independent of the Schwartz function $f.$ In particular, 
\[
f(\sqrt{k}) = \sum_{n \ge 0} (f(\sqrt{n}) a_n(\sqrt{k}) + \widehat{f}(\sqrt{n}) \widehat{a_n}(\sqrt{k})).
\]
In fact, for any pair of sequences $(\{x_i\}_i,\{y_i\}_i)$ decaying sufficiently fast and satisfying the Poisson summation formula 
\begin{equation}\label{eq poisson}
\sum_{n \in \Z} x_{n^2} = \sum_{n \in \Z} y_{n^2},
\end{equation}
the function 
\begin{equation}\label{eq g function}
\mathfrak{G}(t) = \sum_{n \ge 0} (x_n a_n(t) + y_n \widehat{a_n}(t))
\end{equation}
is well-defined and satisfies that $\mathfrak{G}(\sqrt{k}) = x_k, \widehat{\mathfrak{G}}(\sqrt{k}) = y_k.$ In fact, let $(\{x_i\}_i,\{y_i\}_i) \in \ell^2_s(\N) \times \ell^2_s(\N)$ for $s > 0$ sufficiently large. The operator 
$$T:\ell^2_s(\N) \times \ell^2_s(\N) \to \ell^2_s(\N) \times \ell^2_s(\N)$$
given by $T = (T^1,T^2),$ where 
\begin{align*}
T^1(\{x_i\},\{y_i\})_k &= \sum_{n \ge 0} (x_n a_n(\sqrt{k}) + y_n \widehat{a_n}(\sqrt{k})), \cr 
T^2(\{x_i\},\{y_i\})_k &= T^1(\{y_i\},\{x_i\})_k, \cr 
\end{align*} 
has an explicit form: indeed, for $k \ge 1,$ we have 
\[
T^1(\{x_i\},\{y_i\})_k = x_k, \, T^2(\{x_i\},\{y_i\}) = y_k.
\]
For $k=0,$ we have
\[
T^1(\{x_i\},\{y_i\})_0 = \frac{x_0 + y_0}{2} - \sum_{n \ge 1} x_{n^2} + \sum_{n \ge 1} y_{n^2},
\]
\[
T^2(\{x_i\},\{y_i\})_0 = \frac{x_0+y_0}{2} - \sum_{n \ge 1} y_{n^2} + \sum_{n \ge 1} x_{n^2}.
\]
In particular, it is then easy to see that $T = I$ whenever $(\{x_i\}_i,\{y_i\}_i)$ satisfy the Poisson relation \eqref{eq poisson}. Inspired by this fact, we define the perturbed operator associated to a sequence 
$\eps_k > 0, k \in \N,$ to be 
\[
\tilde{T} \text{ defined on } \ell^2_s(\N) \times \ell^2_s(\N),
\]
where $\tilde{T} =(\tilde{T}^1,\tilde{T}^2),$ with 
\begin{align*}
\tilde{T}^1(\{x_i\},\{y_i\})_k &= \sum_{n \ge 0} (x_n a_n(\sqrt{k+\eps_k}) + y_n \widehat{a_n}(\sqrt{k+\eps_k})), \cr 
\tilde{T}^2(\{x_i\},\{y_i\})_k &= \tilde{T}^1(\{y_i\},\{x_i\})_k, \cr 
\end{align*} 
for $k\ge 1,$ and $\tilde{T}^1(\{x_i\},\{y_i\})_0 = x_0, \tilde{T}^2(\{x_i\},\{y_i\})_0 = y_0.$ 
A fundamental fact we will need for our proof is that this operator is \emph{bounded} from $\ell^2_s(\N) \times \ell^2_s(\N) \to \ell^2_s(\N) \times \ell^2_s(\N).$ One way to see this will be provided in the proof of our main theorem,
by showing that the operator norm $\| I - \tilde{T}\|_{\ell^2_s(\N) \times \ell^2_s(\N)) \to \ell^2_s(\N) \times \ell^2_s(\N))} < +\infty.$ This is, incidentally, our main device to prove our result: if 
\[
\| I - \tilde{T}\|_{\ell^2_s(\N) \times \ell^2_s(\N)) \to \ell^2_s(\N) \times \ell^2_s(\N))} < 1,
\]
then $\tilde{T}$ is an invertible operator defined on $\ell^2_s(\N) \times \ell^2_s(\N).$ Therefore, its inverse 
\[
\tilde{T}^{-1} :\ell^2_s(\N) \times \ell^2_s(\N) \to \ell^2_s(\N) \times \ell^2_s(\N)
\]
is well-defined and bounded. In particular, for $f \in \mathcal{S}_{even}(\R)$ real, given the lists of values 
\[
f(0),f(\sqrt{1+\eps_1}),f(\sqrt{2+\eps_2}),\cdots,
\]
\[
\widehat{f}(0), \widehat{f}(\sqrt{1+\eps_1}),\widehat{f}(\sqrt{2+\eps_2}),\cdots,
\]
there is a unique pair $(\{x_i\}_i,\{y_i\}_i) \in \ell^2_s(\N) \times \ell^2_s(\N)$ so that 
$$\tilde{T}(\{x_i\},\{y_i\}) = (\{f(\sqrt{k+\eps_k})\}_k,\{\widehat{f}(\sqrt{k+\eps_k})\}_k).$$
But we also know that 
\[
\tilde{T}(\{f(\sqrt{i})\}_i,\{\widehat{f}(\sqrt{i})\}_i) = T(\{f(\sqrt{i})\}_i,\{\widehat{f}(\sqrt{i})\}_i)=\{f(\sqrt{k+\eps_k})\}_k,\{\widehat{f}(\sqrt{k+\eps_k})\}_k). 
\]
This implies $x_j = f(\sqrt{j}), \, y_j = \widehat{f}(\sqrt{j}).$ By writing the $k-$th entry of the inverse of $\tilde{T}$ as 
\[
\tilde{T}^{-1}(\{w_i\},\{z_i\})_k = \sum_{j \ge 0} (\gamma_{j,k} w_j + \widehat{\gamma}_{j,k} z_j), 
\]
for two sequences $\{\gamma_{j,k}\}_{j,k\ge 0}, \, \{\widehat{\gamma}_{j,k}\}_{j,k \ge 0}$ so that 
$|\gamma_{j,k}| + |\widehat{\gamma}_{j,k}|\lesssim (j/k)^s,$ we must have 
\begin{equation}\label{eq inverse}
f(\sqrt{k}) = \sum_{j \ge 0} (\gamma_{j,k} f(\sqrt{j+\eps_j}) + \widehat{\gamma}_{j,k} \widehat{f}(\sqrt{j+\eps_j})).
\end{equation}
This implies, by \eqref{eq:interpolation_schwartz}, that we can recover $f$ from its values and those of its Fourier transform at $\sqrt{k+\eps_k}.$ Moreover, as the adjoint 
of $\tilde{T}^{-1}$ is also bounded from $\ell^2_s(\N) \times \ell^2_s(\N)$ to itself, we conclude that, for $s \gg 1$ sufficiently large and $f,\widehat{f}$ both being 
$\mathcal{O}((1+|x|)^{-10s}),$ we can use Fubini's theorem in \eqref{eq:interpolation_schwartz} together with \eqref{eq inverse}. This proves the existence of two sequences of functions $\{\theta_j\}_{j\ge0}, \{\eta_j\}_{j\ge0}$
so that 
\[
|\theta_j(x)| + |\eta_j(x)| + |\widehat{\theta}_j(x)| + |\widehat{\eta}_j(x)| \lesssim (1+j)^s (1+|x|)^{-10}
\]
and
\[
f(x) = \sum_{j \ge 0} \left( f(\sqrt{j+\eps_j}) \theta_j(x) + \widehat{f}(\sqrt{j+\eps_j}) \eta_j(x)\right).
\]
Thus, we focus on the proof of the invertibility of $\tilde{T},$  for $s > 0$ suitably chosen.

\begin{proof}[Proof of invertibility of $\tilde{T}$] We use, for this part, the Schur test. That is, define the infinite matrices $A = \{A_{ij}\}_{i,j > 0}$ and $\widehat{A} = \{\widehat{A}_{ij}\}_{i,j > 0}$ by 
\begin{align*}
A_{ij} &= (a_j(\sqrt{i+\eps_i}) - \delta_{ij})\times (i/j)^s, \cr 
\widehat{A}_{ij} &= \widehat{a_j}(\sqrt{i+\eps_i}) (i/j)^s. \cr
\end{align*}
For a given vector $(x,y) \in \ell^2(\N) \times \ell^2(\N),$ we write then 
\[
B(x,y) = (A\cdot x + \widehat{A} \cdot y , A \cdot y + \widehat{A} \cdot x),
\]
or, in matrix notation, 
\[
B = \begin{pmatrix}
                 A &  \widehat{A} \\ \widehat{A} & A
                \end{pmatrix}.
\]
Notice that the operator norm of $\tilde{T} - I$ acting on $\ell^2_s(\N) \times \ell^2_s(\N)$ is, by virtue of our definitions, the \emph{same} as the operator norm of 
$B$ acting on $\ell^2(\N) \times \ell^2(\N).$ Therefore, it will suffice to impose bounds on this latter quantity. 

By Schur's test, it suffices to find $\alpha, \beta >0$ and positive sequences $\{p_i\}_{i \ge 0}, \{q_i\}_{i \ge 0}$ so that the following inequalities hold: 
\begin{align}\label{eq schur1}
\sum_{j > 0} (i/j)^s \times &\left[ |a_j(\sqrt{i+\eps_i}) - \delta_{ij}|p_j + |\widehat{a_j}(\sqrt{i+\eps_i})| q_j \right] \le \alpha p_i, \cr 
\sum_{j > 0} (i/j)^s \times &\left[ |a_j(\sqrt{i+\eps_i}) - \delta_{ij}|q_j + |\widehat{a_j}(\sqrt{i+\eps_i})| p_j \right] \le \alpha q_i, \cr 
\sum_{i > 0} (i/j)^s \times &\left[ |a_j(\sqrt{i+\eps_i}) - \delta_{ij}|p_i + |\widehat{a_j}(\sqrt{i+\eps_i})| q_i \right] \le \beta p_j, \cr 
\sum_{i > 0} (i/j)^s \times &\left[ |a_j(\sqrt{i+\eps_i}) - \delta_{ij}|q_i + |\widehat{a_j}(\sqrt{i+\eps_i})| p_i \right] \le \beta q_j. \cr 
\end{align}
Now, we make the Ansatz that, for all $i >0,\, p_i = q_i = i^{\theta},$ for some real number $\theta \in \R.$ By making use of Theorem \ref{eq improvement decay}, we know that 
\[
|a_j(\sqrt{i+\eps_i}) - \delta_{ij}| + |\widehat{a_j}(\sqrt{i+\eps_i})| \lesssim \frac{\eps_i}{\sqrt{i}} j^{3/4} e^{-c\sqrt{i/j}}.
\]
Therefore, \eqref{eq schur1} reduces to verifying 
\begin{align}\label{eq schur2}
\sum_{j > 0} (i/j)^s \times j^{\theta} & \times \frac{\eps_i}{\sqrt{i}} j^{3/4} e^{-c\sqrt{i/j}} \le \alpha i^{\theta}, \cr 
\sum_{i > 0} (i/j)^s \times i^{\theta} & \times \frac{\eps_i}{\sqrt{i}} j^{3/4} e^{-c\sqrt{i/j}} \le \beta j^{\theta}. \cr 
\end{align}
\noindent\textit{Estimate of the first term in \eqref{eq schur2}.} For this term, we rewrite it as
\[
i^{s-1/2}\times \eps_i \left( \sum_{j > 0} j^{3/4 - s} e^{-c\sqrt{i/j}} j^{\theta}\right). 
\]
In order to estimate this last sum, we break it into $j < i^{1/3}$ and $j > i^{1/3}$ contributions. Therefore, 
\begin{equation}\label{eq sum bound} 
\sum_{j > 0} j^{3/4 - s} e^{-c\sqrt{i/j}} j^{\theta} \lesssim i^{1/3} i^{\max(3/4-s+\theta,0)} e^{-ci^{1/3}} + \sum_{j > i^{1/3}} j^{3/4 - s} e^{-c\sqrt{i/j}} j^{\theta}. 
\end{equation}
Because of the presence of the exponential, the first term is always bounded by an absolute constant times $i^{\theta},$ so we treat it as negligible. For the second term, notice that 
the summand is bounded by a constant times $\int_j^{j+1} x^{3/4-s+\theta} e^{-c\sqrt{i/x}} \mmd x.$ Indeed, the ratio between both is bounded by 
\begin{align*}
\int_j^{j+1} (x/j)^{3/4-s+\theta} e^{c \left(\sqrt{i/j} - \sqrt{i/x}\right)} \, \mmd x & \le 2^{3/4-s+\theta} \sup_{x \in [j,j+1)} e^{c\frac{\sqrt{i}}{\sqrt{jx}} (\sqrt{x} - \sqrt{j})}\cr
										       & \le 2^{3/4 -s+\theta} e^{c'\frac{\sqrt{i}}{\sqrt{j^3}}} \lesssim_{s,\theta} 1,\cr
\end{align*}
as $j > i^{1/3}.$ Thus, we obtain that the second term on the right-hand side of \eqref{eq sum bound} is bounded by 
\begin{align*}
\int_{i^{1/3}}^{\infty} x^{3/4-s+\theta} e^{-c\sqrt{i/x}} \, \mmd x & = \int_0^{i^{-1/3}} (1+1/y)^{3/4-s+\theta} y^{-2} e^{-c\sqrt{iy}} \, \mmd y \cr
\lesssim_{s,\theta} \int_0^{i^{-1/3}}  y^{-11/4 + s-\theta} e^{-c\sqrt{iy}} \, \mmd y & = i^{7/4-s+\theta} \int_0^{i^{2/3}} y^{-11/4+s-\theta} e^{-c\sqrt{y}} \, \mmd y \cr
\lesssim_{s,\theta} i^{7/4-s+\theta},
\end{align*}
as long as $-11/4+s-\theta > -1,$ that is, $\theta < s-7/4.$ Thus, the first term in \eqref{eq schur2} is bounded under such a condition by 
\[
C_{s,\theta} \eps_i i^{s-\frac{1}{2}} i^{\frac{7}{4} - s + \theta} = i^{\frac{5}{4}+\theta} \eps_i. 
\]
In order for this last quantity to be less than $\alpha i^{\theta},$ we must have $\eps_i \lesssim_{s,\theta} \alpha i^{-\frac{5}{4}}.$ We will assume that we have this bound while estimating the second term. \\

\noindent\textit{Estimate for the second term in \eqref{eq schur2}.} For the second term, the strategy is similar, only now the estimates become somewhat simpler by the arithmetic of the 
bounds given by Theorem \ref{eq improvement decay}. Indeed, the second term in \eqref{eq schur2} is bounded by 
\[
c_{s,\theta} j^{\frac{3}{4} - s} \left(\sum_{i>0}  i^{s+\theta-\frac{7}{4}}  e^{-c\sqrt{i/j}} \right). 
\]
Similarly as before, each summand above is bounded by $\int_i^{i+1} x^{s+\theta-\frac{7}{4}} e^{-c\sqrt{x/j}} \, \mmd x.$ Thus, the expression within the parenthesis above is bounded by 
\[
\int_1^{\infty} x^{s+\theta-\frac{7}{4}} \, e^{-c\sqrt{x/j}} \, \mmd x \lesssim_{s,\theta} j^{s+\theta-\frac{3}{4}} \int_0^{\infty} x^{s+\theta-\frac{7}{4}} e^{-c\sqrt{x}} \, \mmd x.
\]
This last integral converges given that $s+\theta-\frac{7}{4} > - 1 \iff s+\theta > \frac{3}{4}.$ In the end, we obtain that the second term in \eqref{eq schur2} is bounded by $c_{s,\theta} j^{\theta}$
if these conditions on $s,\theta$ hold. 

Finally, we gather these two estimates to get that, if $s-\theta > \frac{7}{4}, s+\theta>\frac{3}{4}$ and if $\eps_i < \gamma i^{-\frac{5}{4}}$ for $\gamma > 0$ sufficiently small, 
then both terms of \eqref{eq schur2} are bounded by small constants times $i^\theta$ and $j^\theta.$ Notice that picking $s = 10$ and $\theta>0$ sufficiently small yields that both conditions 
above hold true, and thus the result follows from Schur's test, as previously indicated.
\end{proof}

As mentioned in the beginning of this manuscript, the usage of Schur's test here was instrumental in order to expand the range of our perturbations. In fact, in \S \ref{sec origin}, se employ the 
Hilbert--Schmidt test successfully to our operator $\tilde{T}$ and obtain that, as long as there is $\delta>0$ such that $\eps_i \lesssim i^{-\frac{5}{4}-\delta},$ then $\tilde{T}$ is bounded on
$\ell^2_s(\N) \times \ell^2_s(\N),$ for $s$ sufficiently large, but we seem to be unable to include $5/4$ in our considerations with the Hilbert--Schmidt method. 

On the other hand, we will see in that subsection that the Hilbert--Schmidt method provides us with a way to suitably perturb the origin, a feature we could not obtain with Schur's test. 

\section{Applications of the main results and techniques}\label{sec applications}

\subsection{Interpolation formulae perturbing the origin}\label{sec origin} In the main results of this manuscript, the only interpolation node that remains unchanged in every scenario is $0.$ One of the reasons for that 
is aesthetic: we are concerned mainly with even functions here, so the origin keeps a sense of symmetry. The other main reason is technical: we recall that the operator 
$$T:\ell^2_s(\N) \times \ell^2_s(\N) \to \ell^2_s(\N) \times \ell^2_s(\N)$$
given by $T = (T^1,T^2),$ where 
\begin{align*}
T^1(\{x_i\},\{y_i\})_k &= \sum_{n \ge 0} (x_n a_n(\sqrt{k}) + y_n \widehat{a_n}(\sqrt{k})), \cr 
T^2(\{x_i\},\{y_i\})_k &= T^1(\{y_i\},\{x_i\})_k, \cr 
\end{align*} 
for $k \ge 0,$ is the identity when restricted to the set of pairs of sequences satisfying the Poisson summation formula 
\[
\sum_{n \in \Z} x_{n^2} = \sum_{n \in \Z} y_{n^2}. 
\]
For general sequences, the first entries of this operators possess a correction factor due to the lack of Poisson summation. Indeed, 
it is not difficult to verify that $\text{dim}(\text{ker}(T)) = \text{dim}(\text{coker}(T)) = 1$ from the explicit definitions. Therefore,
we can no longer prove invertibility. 

Nonetheless, we also remark that a direct computation shows that the range of $T$ is \emph{closed.} Therefore, $T$ satisfies all conditions to be a 
Fredholm operator. 

Let us then define a new perturbed operator $S$ defined on $\ell^2_s(\N) \times \ell^2_s(\N),$ such that 
\begin{align*}
S^1(\{x_i\},\{y_i\})_k &= \sum_{n \ge 0} (x_n a_n(\sqrt{k+\eps_k}) + y_n \widehat{a_n}(\sqrt{k+\eps_k})), \cr 
S^2(\{x_i\},\{y_i\})_k &= S^1(\{y_i\},\{x_i\})_k, \cr 
\end{align*} 
for all $k \ge 0,$ where $\eps_k > 0, \, \forall k \ge 0.$ We denote by $\ee_n \in \ell^2_s(\N)$ the vector consisting of $n^{-s}$ on the $n-$th entry, and zero otherwise. With this definition, the set 
\[
\{ (\ee_n,\mathbf{0}) \colon n \in \N\} \cup \{(\mathbf{0},\ee_n) \colon n \in \N\}
\]
forms an orthonormal basis of $\ell^2_s(\N) \times \ell^2_s(\N).$ Thus, 
\[
\|A\|_{HS(\ell^2_s(\N) \times \ell^2_s(\N))}^2 = \sum_{n \in \N} (\|A(\ee_n,\mathbf{0})\|_{(s,s)}^2 + \|A(\mathbf{0},\ee_n)\|_{(s,s)}^2),
\]
where we denote by $\| \cdot \|_{(s,s)}$ the norm of $\ell^2_s(\N) \times \ell^2_s(\N).$ Let then $A = I - \tilde{T}.$ \\

\begin{claim}\label{claim HS} $\|A\|_{HS(\ell^2_s(\N) \times \ell^2_s(\N))} < +\infty$ holds whenever there is $\delta>0$ so that 
$|\eps_k| \lesssim k^{-\frac{5}{4}-\delta}, \, \forall k \ge 1.$ 
\end{claim}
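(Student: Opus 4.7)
The plan is to expand the Hilbert--Schmidt norm in the given orthonormal basis, use the interpolation identities $a_n(\sqrt{m}) = \delta_{nm}$ and $\widehat{a_n}(\sqrt{m}) = 0$ for $n,m \ge 1$ to interpret $A(\ee_n,\mathbf{0})$ and $A(\mathbf{0},\ee_n)$ as pointwise differences of the form $F(\sqrt{k+\eps_k}) - F(\sqrt{k})$, and then control those differences by the derivative estimate from Theorem \ref{eq improvement decay} combined with $|\eps_k| \lesssim k^{-5/4-\delta}$.

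Concretely, first I compute the columns. For $n \ge 1$, the definition of $\tilde{T}$ together with $a_n(\sqrt{k}) = \delta_{nk}$ and $\widehat{a_n}(\sqrt{k})=0$ (for $k \ge 1$) gives
\[
A(\ee_n,\mathbf{0})_k^{(1)} = n^{-s}\bigl(\delta_{nk} - a_n(\sqrt{k+\eps_k})\bigr), \qquad A(\ee_n,\mathbf{0})_k^{(2)} = -n^{-s}\widehat{a_n}(\sqrt{k+\eps_k}),
\]
for $k \ge 1$, with both $k=0$ entries equal to $0$. A mean value estimate gives
\[
\bigl|a_n(\sqrt{k+\eps_k}) - a_n(\sqrt{k})\bigr| + \bigl|\widehat{a_n}(\sqrt{k+\eps_k}) - \widehat{a_n}(\sqrt{k})\bigr| \lesssim \frac{|\eps_k|}{\sqrt{k}}\, n^{3/4}\log^{3/2}(1+n)\, e^{-c\sqrt{k/n}},
\]
which is where Theorem \ref{eq improvement decay} enters (applied to both $a_n$ and $\widehat{a_n}$ via $b_n^\pm$). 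For $k = n$, a similar bound controls $|1 - a_n(\sqrt{n+\eps_n})|$ by $|\eps_n|\, n^{1/4}\log^{3/2}(1+n)$.

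Next, I put these into the weighted $\ell^2$-norm. The sum over $k \ge 1$ of $k^{2s} n^{-2s}$ times the squared differences above becomes, after inserting $|\eps_k|^2 \lesssim k^{-5/2-2\delta}$,
\[
\|A(\ee_n,\mathbf{0})\|_{(s,s)}^2 \lesssim n^{3/2-2s}\log^{3}(1+n) \sum_{k \ge 1} k^{2s-7/2-2\delta}\, e^{-2c\sqrt{k/n}}.
\]
Approximating the $k$-sum by the corresponding integral and changing variables $k = ny$ (which is the same device used in the Schur-test portion of \S \ref{sec main result}) yields
\[
\sum_{k \ge 1} k^{2s-7/2-2\delta}\, e^{-2c\sqrt{k/n}} \lesssim n^{2s-5/2-2\delta},
\]
provided $s$ is large enough that $2s - 7/2 - 2\delta > -1$. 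Substituting back gives $\|A(\ee_n,\mathbf{0})\|_{(s,s)}^2 \lesssim n^{-1-2\delta}\log^{3}(1+n)$, which is summable in $n$. The column $A(\mathbf{0},\ee_n)$ is estimated symmetrically.

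Finally, the $n = 0$ column must be treated separately since the interpolation identities take a different form there: $a_0(\sqrt{k}) = 0$ for $k \ge 1$ but the constant-term correction in $T$ is absorbed into the special $k=0$ entry of $\tilde{T}$. For this column I will use the refined exponential decay of $a_0$ (Corollary \ref{thm bound best}, together with the pointwise bound sketched in the remark preceding it), which is strong enough that even without the mean-value trick the $k$-sum converges comfortably. I expect the main technical obstacle to be precisely the separation into the diagonal contribution $k = n$ (where the factor $\sqrt{k+\eps_k} - \sqrt{k} \approx \eps_n/(2\sqrt{n})$ only gains a $1/\sqrt{n}$) and the off-diagonal tail (where the exponential $e^{-c\sqrt{k/n}}$ must do the heavy lifting once $k \gg n$); the identity that the $5/4$ threshold for $|\eps_k|$ is \emph{exactly} what is needed to balance these two contributions is what forces the strict inequality in the assumption $|\eps_k| \lesssim k^{-5/4-\delta}$, consistent with the remark that the Hilbert--Schmidt approach cannot quite reach the endpoint $5/4$ attained by Schur's test.
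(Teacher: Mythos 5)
Your proposal is correct and follows essentially the same route as the paper's proof: expand the Hilbert--Schmidt norm over the basis columns, bound each entry $a_n(\sqrt{k})-a_n(\sqrt{k+\eps_k})$ by the fundamental theorem of calculus together with the derivative estimate $|a_n'(t)|\lesssim n^{3/4}\log^{3/2}(1+n)e^{-ct/\sqrt{n}}$ from Theorem \ref{eq improvement decay}, and then compare the resulting $k$-sum with an integral via the substitution $k=ny$ to obtain a column norm of order $n^{-1-2\delta}$ up to logarithms, which is summable. Your explicit separate treatment of the $n=0$ column (via the exponential decay of $a_0$) is a point the paper passes over silently, but it changes nothing of substance.
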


\begin{proof}[Proof of Claim \ref{claim HS}] As mentioned before, we can write the identity on $\ell^2_s(\N) \times \ell^2_s(\N)$ as 
\[
I(\{x_i\},\{y_i\}) = ((x_0,\mathfrak{G}(1),\mathfrak{G}(\sqrt{2}),\dots),(y_0,\widehat{\mathfrak{G}}(1),\widehat{\mathfrak{G}}(\sqrt{2}),\dots)),
\]
where we define the function $\mathfrak{G}$ as in \eqref{eq g function}. With this notation, the operator $\tilde{T}$ becomes 
\[
\tilde{T}(\{x_i\},\{y_i\}) = ((x_0,\mathfrak{G}(\sqrt{1+\eps_1}), \mathfrak{G}(\sqrt{2+\eps_2}), \dots),(y_0, 
\]
\[
\widehat{\mathfrak{G}}(\sqrt{1+\eps_1}),\widehat{\mathfrak{G}}(\sqrt{2+\eps_2}),\dots)).
\]
Therefore, evaluating at the basis vectors gives us 
$$(I-\tilde{T})(\ee_n,\mathbf{0}) = ((0,n^{-s}(a_n(\sqrt{1})-a_n(\sqrt{1+\eps_1}),n^{-s}(a_n(\sqrt{2})-a_n(\sqrt{2+\eps_2})),\dots),
$$
$$(0,0,\dots)).$$ 
We readily see then that 
\begin{align}\label{eq HS explicit} 
\|I-\tilde{T}\|_{HS(\ell^2_s(\N) \times \ell^2_s(\N))}^2 &= \sum_{n > 0} \left(\sum_{k \ge 0} (1+k)^{2s}(1+n)^{-2s}|a_n(\sqrt{k}) - a_n(\sqrt{k+\eps_k})|^2\right) \cr 
							 & + \sum_{n > 0} \left(\sum_{k \ge 0} (1+k)^{2s}(1+n)^{-2s} |\widehat{a_n}(\sqrt{k}) - \widehat{a_n}(\sqrt{k+\eps_k})|^2\right).
\end{align}
From Theorem \ref{eq improvement decay}, we know that 
\begin{align}
|a_n(\sqrt{k}) - a_n(\sqrt{k+\eps_k})|&\le \int_{\sqrt{k}}^{\sqrt{k+\eps_k}} |a_n'(t)| \, \mmd t \cr 
				      &\le \frac{C\eps_k}{\sqrt{k}} n^{3/4} e^{-c\sqrt{k/n}}, \cr
\end{align}
for some $c>0$ and $k \ge 1.$ Analogously, 
\[
|\widehat{a_n}(\sqrt{k}) - \widehat{a_n}(\sqrt{k+\eps_k})| \le \frac{C\eps_k}{\sqrt{k}} n^{3/4} e^{-c\sqrt{k/n}}.
\]

These estimates plus the condition on the $\eps_k$ imply that \eqref{eq HS explicit} may be bounded from above by an absolute constant times
\[
\sum_{n \ge 0} \left(\sum_{k \ge 1} k^{2s} k^{-\frac{5}{2}-2\delta} \cdot k^{-1} e^{-2c\sqrt{k/n}} \right) n^{\frac{3}{2}-2s}.
\]
In order to prove convergence, we first investigate the inner sum. A Riemann sum approach together with a change of variables shows that this is bounded by a constant times
\[
(1+n)^{2s - \frac{5}{2} -2\delta} \left(\int_0^{\infty} t^{2s} t^{-\frac{5}{2}-2\delta} \cdot t^{-1} e^{-c\sqrt{t}} \, \mmd t\right) =: (1+n)^{2s-\frac{5}{2}-2\delta} I_{s,\delta}.
\]
Clearly, the inner integral converges given that $s > \frac{5}{4} + \delta.$ Putting these estimates together with \eqref{eq HS explicit} and using Fubini, we obtain that
\[
\|I-\tilde{T}\|_{HS(\ell^2_s(\N) \times \ell^2_s(\N))}^2 \le I_{s,\delta}\left(\sum_{n \ge 0} (1+n)^{-1-2\delta}\right) < +\infty, 
\]
as desired.
\end{proof}

As a direct corollary, we see that, for each $\delta>0,$ there is $a>0$ so that, if $|\eps_i| \le a i^{-\frac{5}{4}-\delta}\, \forall \, i > 0,$ then 
\[
\|A\|_{HS(\ell^2_s(\N) \times \ell^2_s(\N))} < 1.
\]
In particular, we shall make use of the fact that $T$ is a Fredholm operator by means of such an inequality, with aid of the following result:

\begin{theorem}[Theorems 2.8 and 2.10 in \cite{Schechter}] Let $\Phi(X,Y)$ denote the set of bounded Fredholm operators between Banach spaces $X$ and $Y.$ If $A \in \Phi(X,Y)$ and $K \in \mathcal{K}(X,Y)$ 
is a compact operator, then $A+K \in \Phi(X,Y)$ and $i(A) = i(A+K),$ where we define the \emph{index} $i:\Phi(X,Y) \to \N$ by $i(A) = \text{dim}(\text{ker}(A)) - \text{dim}(\text{coker}(A)) =:\alpha(A) - \beta(A).$ 

Furthermore, if $\|K\|_{op}$ is small enough, then it also holds that $\alpha(A+K) \le \alpha(A).$ 
\end{theorem}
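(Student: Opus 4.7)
The plan is to build everything on the classical construction of a \emph{parametrix} for the Fredholm operator $A$. Since $A \in \Phi(X,Y)$, the kernel $\ker A$ is finite-dimensional and $\operatorname{ran}(A)$ is closed of finite codimension, so $X$ and $Y$ split as $X = \ker A \oplus X_0$ and $Y = \operatorname{ran}(A) \oplus Y_0$ with $X_0$ closed and $Y_0$ finite-dimensional. The restriction $A|_{X_0} : X_0 \to \operatorname{ran}(A)$ is a Banach space isomorphism by the open mapping theorem. I would then define the parametrix $B : Y \to X$ by setting $B$ equal to $(A|_{X_0})^{-1}$ on $\operatorname{ran}(A)$ and to zero on $Y_0$, and verify the identities $BA = I_X - P$ and $AB = I_Y - Q$, where $P$ is the finite-rank projection of $X$ onto $\ker A$ and $Q$ the one of $Y$ onto $Y_0$.

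With this device at hand, the first part of the theorem reduces to Riesz--Fredholm theory for compact perturbations of the identity. From $B(A+K) = I_X - P + BK$ and the fact that $-P + BK$ is compact (as the sum of a finite-rank and a compact operator), classical Riesz theory yields that $I_X - P + BK$ is Fredholm with index zero; symmetrically for $(A+K)B = I_Y - Q + KB$. A short algebraic argument then shows that $A+K$ itself must have closed range and finite-dimensional kernel and cokernel, hence $A+K \in \Phi(X,Y)$. For the index statement, I would consider the continuous path $\gamma(t) = A + tK$, $t \in [0,1]$, observe that each $\gamma(t)$ is Fredholm by the same parametrix argument applied to the compact perturbation $tK$, and invoke the classical fact that the index map $i : \Phi(X,Y) \to \Z$ is locally constant on $\Phi(X,Y)$. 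Since $[0,1]$ is connected and $i \circ \gamma$ is integer-valued and locally constant, one concludes $i(A+K) = i(A)$.

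For the last assertion, the plan is to prove upper semi-continuity of $\alpha$ directly from the decomposition $X = \ker A \oplus X_0$. For $\|K\|_{op}$ strictly smaller than $\|(A|_{X_0})^{-1}\|^{-1}$, a Neumann series argument shows that the perturbed map $(A+K)|_{X_0} : X_0 \to Y$ is injective with closed range. Consequently, any element of $\ker(A+K)$ must have trivial $X_0$-component, so that the projection of $X$ onto $\ker A$ along $X_0$ restricts to an injective linear map $\ker(A+K) \hookrightarrow \ker A$, immediately giving $\alpha(A+K) \le \alpha(A)$.

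The main obstacle I anticipate is justifying the local constancy of the index at a general Fredholm operator: this requires a quantitative refinement of the parametrix construction, checking that the parametrix and its finite-rank error terms can be chosen to depend continuously enough on $A$ in operator norm that, for $\|K\|$ small, both $\alpha(A+K)$ and $\beta(A+K)$ can only drop compared to $\alpha(A), \beta(A)$ by the semicontinuity argument above, while their difference remains pinned by the homotopy. Once this quantitative stability is in place, everything else is a straightforward combination of Riesz theory and elementary linear algebra.
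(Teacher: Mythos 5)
The paper does not actually prove this statement: it is quoted verbatim from Schechter and used as a black box, so there is no internal proof to compare against. Your proposal is the standard textbook argument (parametrix, Atkinson's theorem, Riesz theory for identity-minus-compact, homotopy invariance of the index, upper semicontinuity of $\alpha$), and its overall architecture is sound. One remark on efficiency: the local constancy of the index, which you correctly identify as the main burden, is itself essentially the stability theorem you are in the middle of proving, so the homotopy route forces you to establish that quantitative fact from scratch. You can bypass the homotopy entirely via the logarithmic law $i(ST)=i(S)+i(T)$ together with $i(I-\text{compact})=0$: from $BA=I_X-P$ and $B(A+K)=I_X-P+BK$ one reads off $i(B)+i(A)=0=i(B)+i(A+K)$, hence $i(A+K)=i(A)$ with no path argument.

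There is one genuine (though easily repaired) error in the last paragraph: the assertion that ``any element of $\ker(A+K)$ must have trivial $X_0$-component'' is false in general. For $x=x_1+x_0\in\ker(A+K)$ with $x_1\in\ker A$ and $x_0\in X_0$, one only obtains $Ax_0=-Kx$, and the right-hand side is controlled by $\|K\|\,\|x\|$ rather than $\|K\|\,\|x_0\|$, so you cannot force $x_0=0$; small perturbations really do tilt the kernel off $\ker A$. What you need, and what the lower bound $\|Ax\|\ge c\,\|x\|$ on $X_0$ (with $c=\|(A|_{X_0})^{-1}\|^{-1}$) actually gives, is the weaker statement $\ker(A+K)\cap X_0=\{0\}$: if $x\in\ker(A+K)$ lies entirely in $X_0$, then $c\,\|x\|\le\|Ax\|=\|Kx\|\le\|K\|\,\|x\|$, forcing $x=0$ once $\|K\|<c$. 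This weaker statement is exactly what makes the projection of $X$ onto $\ker A$ along $X_0$ injective on $\ker(A+K)$, so the conclusion $\alpha(A+K)\le\alpha(A)$ stands; only the intermediate claim needs rewording.
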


Notice that we may write $S - T = \tilde{T} - I + K_0,$ where $K_0$ has finite rank and bounded, and thus also compact. Therefore, $S = T + (S-T) = T + (\tilde{T} - I) + K_0$ can be written as 
sum of a Fredholm operator $T$ and a compact operator $\tilde{T} - I + K_0.$ This already implies that, modulo a finite-dimensional subspace, the sequences $(\{f(\sqrt{k+\eps_k})\}, \{\widehat{f}(\sqrt{k+\eps_k})\})$ determine 
the sequences $(\{f(\sqrt{k})\},\{\widehat{f}(\sqrt{k})\}).$ That is, we can determine the function $f \in \mathcal{S}_{even}(\R)$ from its (Fourier-)values as 
$\sqrt{k+\eps_k},$ modulo subtracting functions belonging to a finite-dimensional space. 

If, however, we make $|\eps_k| < \epsilon k^{-\frac{5}{4}-\delta},$ with $\epsilon$ small enough, and $|\eps_0| \ll 1,$ we get that the operator norms of 
both $I - \tilde{T} = A$ and $K_0$ can be made arbitrarily small. Thus, 
\[
i(S) = i(T + (S-T)) = i(T)=0 \iff \alpha(S) = \beta(S),
\]
and, moreover, 
$$\alpha(S) \le \alpha(T),$$
as the Hilbert--Schmidt norm of the difference is small. Thus, either 
\[
\alpha(S) = \beta(S) = 0,
\]
in which case we can perfectly invert the operator $S$, or 
\[
\alpha(S) = \beta(S) = 1,
\]
which implies that there is essentially \emph{at most one} function $f_0 \in \mathcal{S}_{even}(\R)$ that vanishes at $\sqrt{k+\eps_k}.$ As 
$(\{f(\sqrt{k+\eps_k})\},\{\widehat{f}(\sqrt{k+\eps_k})\}) \in \text{im}(S)$ for every real $f \in \mathcal{S}_{even}(\R),$ we have proved the followin result.

\begin{theorem}\label{thm alternative} Let $T,S,\{\eps_i\}_{i \ge 0}$ be as above. Then one of the following holds: 
\begin{enumerate}
 \item Either $S$ is an isomorphism from $\ell^2_s(\N) \times \ell^2_s(\N)$ onto itself, and thus the values 
 $$
 (\{f(\sqrt{j+\eps_j})\},\{\widehat{f}(\sqrt{j+\eps_j})\})
 $$
 determine any real function $f \in \mathcal{S}_{even}(\R);$ 
 \item Or $\text{ker}(S)$ has dimension one, and therefore $S$ is an isomorphism from $\text{ker}(S)^{\perp}$ onto $\text{im}(S).$ 
 
 In particular, any real function $f \in \mathcal{S}_{even}(\R)$ is uniquely determined by 
 $$(\{f(\sqrt{j+\eps_j})\},\{\widehat{f}(\sqrt{j+\eps_j})\}),$$ together with the value of 
 \[
 \frac{\langle (\{f(\sqrt{j+\eps_j})\},\{\widehat{f}(\sqrt{j+\eps_j})\}) ,(\{\alpha_i\},\{\beta_i\}) \rangle_{(s,s)}}{\|(\{\alpha_i\},\{\beta_i\})\|_{(s,s)}^2},
 \]
 where $(\{\alpha_i\},\{\beta_i\}) \in \text{ker}(S)$ is a generator for the kernel of $S.$ 
\end{enumerate}
\end{theorem}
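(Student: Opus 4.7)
The plan is to use the Fredholm perturbation framework already sketched right before the statement, together with Claim \ref{claim HS}, to carry out the dichotomy. First, I would write $S = T + (\tilde{T}-I) + K_0$, where $K_0$ is the finite-rank correction arising from the fact that $T$ differs from the identity only in the two ``zeroth'' coordinates through the terms $\sum_{n\ge 1}(x_{n^2}-y_{n^2})$ and its symmetric counterpart, while $\tilde T$ already has $x_0,y_0$ as its $0$-th coordinates. Since $K_0$ has rank at most $2$ (it only affects two coordinates), it is automatically compact. Claim \ref{claim HS} states that $\tilde T - I$ is Hilbert--Schmidt under the hypothesis $|\eps_k|\lesssim k^{-5/4-\delta}$, and Hilbert--Schmidt operators are compact; hence $S-T$ is compact on $\ell^2_s(\N)\times\ell^2_s(\N)$.

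Next, I would verify that $T\in\Phi(\ell^2_s(\N)\times\ell^2_s(\N))$ with $\alpha(T)=\beta(T)=1$ and hence $i(T)=0$. This is essentially a direct computation from the explicit formula for $T$: the kernel of $T$ consists of pairs $(\{x_i\},\{y_i\})$ with $x_i=y_i=0$ for $i\ge 1$ and $\frac{x_0+y_0}{2}=0$, so it is one-dimensional (spanned by $(\ee_0,-\ee_0)$, in appropriate notation). A similar count, using the explicit correction at index $0$ and the fact that $T$ is the identity on all other coordinates, gives $\beta(T)=1$ and shows the range is closed. Invoking the Schechter stability theorem cited in the excerpt then yields $i(S)=i(T)=0$, so $\alpha(S)=\beta(S)$.

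At this stage, I would use the quantitative part of the same stability result: because $\|\tilde T-I\|_{HS}$ can be made arbitrarily small by shrinking the constant in $|\eps_k|\le a\,k^{-5/4-\delta}$, and $\|K_0\|_{\mathrm{op}}\lesssim|\eps_0|$ can be made arbitrarily small by shrinking $|\eps_0|$, the full perturbation $S-T$ has operator norm below the threshold required to guarantee $\alpha(S)\le\alpha(T)=1$. Combined with $\alpha(S)=\beta(S)$, this forces $\alpha(S)\in\{0,1\}$, corresponding exactly to the two alternatives in the statement.

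Finally, I would translate each alternative into the claimed recovery statement. In case $\alpha(S)=0$, $S$ is a bijection of the Hilbert space $\ell^2_s(\N)\times\ell^2_s(\N)$ onto itself, and applying $S^{-1}$ to $(\{f(\sqrt{j+\eps_j})\},\{\widehat f(\sqrt{j+\eps_j})\})$ recovers $(\{f(\sqrt{j})\},\{\widehat f(\sqrt{j})\})$, which then reconstructs $f$ via \eqref{eq:interpolation_schwartz}. In case $\alpha(S)=1$, the restriction $S|_{\ker(S)^\perp}\to\mathrm{im}(S)$ is an isomorphism by the closed range theorem, so the orthogonal projection of $(\{f(\sqrt{j+\eps_j})\},\{\widehat f(\sqrt{j+\eps_j})\})$ onto $\ker(S)^\perp$ is determined by these values together with the single inner product against a chosen generator of $\ker(S)$, which is precisely the additional scalar displayed in the statement. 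The main technical point to be careful about is checking that the sequence $(\{f(\sqrt{j+\eps_j})\},\{\widehat f(\sqrt{j+\eps_j})\})$ belongs to $\ell^2_s(\N)\times\ell^2_s(\N)$ (immediate from Schwartz decay) and actually lies in $\mathrm{im}(S)$, which follows because $S$ applied to the true coefficients $(\{f(\sqrt j)\},\{\widehat f(\sqrt j)\})$ produces exactly this sequence, using the Radchenko--Viazovska interpolation formula pointwise.
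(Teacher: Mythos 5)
Your proposal follows essentially the same route as the paper: decompose $S = T + (\tilde{T}-I) + K_0$ with $K_0$ of finite rank, invoke Claim \ref{claim HS} to get compactness (indeed smallness) of $S-T$, and apply Schechter's stability theorem to conclude $i(S)=i(T)=0$ together with $\alpha(S)\le\alpha(T)=1$, which yields the dichotomy. The only material you add is the explicit computation of $\ker(T)$ and $\mathrm{coker}(T)$ (both one-dimensional), which the paper asserts without detail; this checks out, and the argument is correct.
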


Notice that the first option in Theorem \ref{thm alternative} yields immediately an interpolation formula, in the spirit of \eqref{eq inverse}. For the 
second one, the operator is now only invertible if restricted to $\text{ker}(S)^{\perp},$ and now the process of recovering $f \in \mathcal{S}_{even}(\R : \R)$ 
has to take into account the inner product with the kernel vector and the structure of the range.

\subsection{Uniqueness for small powers of integers}\label{sec unique small} Let $\alpha \in (0,1/2).$ We are interested in determining when the only function $f \in \mathcal{S}_{even}(\R)$ that vanishes together with 
its Fourier transform at $\pm n^{\alpha}$ is the identically zero function. \\

Indeed, we would like to study the natural operator that sends the sequence of values at the roots of integers  $(\{f(\sqrt{k})\}_k,\{\widehat{f}(\sqrt{k})\}_k\})$ to the sequence 
$(\{f(n^{\alpha})\}_n,\{\widehat{f}(n^{\alpha})\}_n).$ Our goal is to show that this operator is injective. In order to do that, we will first study simpler operators. 

Indeed, let $K_0 \in \N$ be a fixed positive integer. Fix a set of $2K_0$ positive real numbers $t_1 < t_2 < \dots < t_{2K_0}$ such that $t_1 > \sqrt{K_0}$ and none of the $t_j$ can be written
as a square root of a positive integer. We fix $s>0$ sufficiently large and define the operator 
\begin{align*}
T_{K_0} : \ell^2_s(\N) \times \ell^2_s(\N) \to &\ell^2_s(\N) \times \ell^2_s(\N) \cr 
          (\{x_i\}_i,\{y_i\}_i) \mapsto &((x_0,\mathfrak{G}(t_1),\mathfrak{G}(t_2),\dots,\mathfrak{G}(t_{2K_0}),x_{K_0 +1},x_{K_0 + 2}, \dots),\cr 
          &                               (y_0,\widehat{\mathfrak{G}}(t_1),\widehat{\mathfrak{G}}(t_2),\dots,\widehat{\mathfrak{G}}(t_{2K_0}),y_{K_0+1},y_{K_0+2},\dots)). \cr 
\end{align*}
Here, we denoted by $\mathfrak{G}$ the function defined as in \eqref{eq g function}. 

\begin{lemma}\label{lemma inject k_0} For any $K_0 \ge 1,$ the operator $T_{K_0}$ is bounded and injective. 
\end{lemma}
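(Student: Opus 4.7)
My plan is to handle boundedness quickly via the decay estimates of Theorem~\ref{eq improvement decay}, then reduce injectivity to a finite-dimensional rank condition on evaluation matrices of Fourier eigenvectors.

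First, for boundedness: the output entries $x_0, y_0$ and $x_{K_0+j}, y_{K_0+j}$ for $j \ge 1$ are preserved up to the shift by $K_0$, contributing only bounded terms to the operator norm on $\ell^2_s(\N)\times\ell^2_s(\N)$. For the remaining $4K_0$ evaluations $\mathfrak{G}(t_j), \widehat{\mathfrak{G}}(t_j)$, I would apply Cauchy--Schwarz with weight $n^{2s}$ together with the bound $|a_n(t_j)| + |\widehat{a_n}(t_j)| \lesssim n^{1/4}\log^{3/2}(1+n)\, e^{-c t_j/\sqrt{n}}$ from Theorem~\ref{eq improvement decay}, obtaining $|\mathfrak{G}(t_j)| \lesssim_{s,t_j,K_0} \|(\{x\},\{y\})\|_{(s,s)}$ for $s$ taken sufficiently large.

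For injectivity, suppose $T_{K_0}(\{x\},\{y\}) = 0$. Reading off coordinates gives $x_0 = y_0 = 0$, $x_k = y_k = 0$ for $k \ge K_0+1$, and $\mathfrak{G}(t_j) = \widehat{\mathfrak{G}}(t_j) = 0$ for $j = 1,\ldots,2K_0$, where $\mathfrak{G} = \sum_{k=1}^{K_0}(x_k a_k + y_k \widehat{a_k})$ lies in the $2K_0$-dimensional space $V := \mathrm{span}\{a_k, \widehat{a_k}: 1 \le k \le K_0\}$ (linear independence being verified by evaluating at $\sqrt{m}$ and applying the Fourier transform). I would then decompose $\mathfrak{G} = \mathfrak{G}^+ + \mathfrak{G}^-$ into Fourier $\pm 1$-eigenvector components $\mathfrak{G}^\pm = (\mathfrak{G} \pm \widehat{\mathfrak{G}})/2$, so that the four conditions decouple into $\mathfrak{G}^\pm(t_j) = 0$. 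Each $\mathfrak{G}^\pm$ lies in the $K_0$-dimensional subspace $V^\pm := \mathrm{span}(b_1^\pm,\ldots,b_{K_0}^\pm)$, using the facts that $b_0^- \equiv 0$ (a consequence of the Poisson constraint $\sum_n x_{n^2} = 0$ applied to $-1$-Fourier-eigenvectors) and $b_j^\pm(0) = 0$ for $j \ge 1$ (obtained by applying the Radchenko--Viazovska formula to $b_j^\pm$ itself and using that $b_0^+$ is nontrivial). Injectivity of $T_{K_0}$ thus reduces to showing that the two $2K_0 \times K_0$ evaluation matrices $M^\pm := (b_k^\pm(t_j))_{j,k}$ have full column rank $K_0$.

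The main obstacle is establishing this rank condition for every admissible choice of $t_j$. My plan is to view any putative kernel element $\varphi^+ = \sum_k u_k b_k^+$ through its integral representation $\varphi^+(x) = \tfrac{1}{2}\int_{-1}^{1} G^+(z) e^{i\pi x^2 z}\,\mathrm{d}z$ with $G^+ = \sum u_k g_k^+$, so that $\tilde{\psi}(w) := \varphi^+(\sqrt{w})$ is an entire function of exponential type $\pi$ in the variable $w = x^2$. If $\varphi^+ \ne 0$ lay in the kernel, then $\tilde{\psi}$ would vanish at $w = 0$ and at every integer $w \ge K_0 + 1$ (automatic from $\varphi^+ \in V^+$), plus at the $2K_0$ extra non-integer points $w = t_j^2 > K_0$ (from the kernel hypothesis), with these extras lying strictly off the integer lattice by the constraint $t_j \neq \sqrt{k}$. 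A Hadamard-factorization argument---or a Levinson-type zero-counting density comparison, exploiting that $\det M^+$ is a real-analytic function of $(t_1,\ldots,t_{2K_0})$ whose zero locus is contained in the forbidden set $\bigcup_{j,k}\{t_j = \sqrt{k}\}$---then forces $\tilde{\psi} \equiv 0$, contradicting nontriviality. The analogous argument for $\varphi^-$ concludes injectivity. The chief technical subtlety is that the weakly holomorphic modular forms $g_k^\pm$ have singular behavior at the cusps $z = \pm 1$, so $\tilde{\psi}$ lies in an enlarged Paley--Wiener-like class and the zero-counting must be carried out in the correct Cartwright class rather than classical $PW_\pi$.
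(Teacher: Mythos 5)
Your boundedness argument, and your reduction of injectivity to the claim that no nonzero element of $\mathrm{span}(b_1^{\pm},\dots,b_{K_0}^{\pm})$ can vanish at all $2K_0$ points $t_j$, both match the paper. The gap is in the decisive step: the Hadamard-factorization / zero-counting argument you sketch for this rank condition does not close. Within the class of entire functions of exponential type $\pi$ in the variable $w=x^2$, the prescribed zero set $\{0\}\cup\Z_{\ge K_0+1}\cup\{t_1^2,\dots,t_{2K_0}^2\}$ does \emph{not} force vanishing: the function $\sin(\pi w)\prod_{j=1}^{2K_0}(w-t_j^2)\big/\prod_{k=1}^{K_0}(w-k)$ is entire of type $\pi$ and vanishes on a superset of it. Any zero-counting proof must therefore exploit quantitative control of $\tilde\psi(w)=\varphi^{+}(\sqrt w)$ on the \emph{negative} real axis (equivalently, of $\varphi^{+}$ at imaginary arguments), where the representation $\tfrac12\int_{-1}^{1}G^{+}(z)e^{i\pi wz}\,\mathrm{d}z$ in fact grows like $e^{\pi|w|}$; this is exactly the ``technical subtlety'' you defer, and it is the whole difficulty. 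The remark that the zero locus of the evaluation matrices is contained in $\bigcup_{j,k}\{t_j=\sqrt k\}$ is moreover circular, since that containment is precisely what has to be proved (and $M^{\pm}$ is $2K_0\times K_0$, so ``$\det M^{+}$'' is not defined; you mean that some $K_0\times K_0$ minor is nonzero).

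The paper closes this step with a variation-diminishing argument instead. For $|x|>\sqrt{K_0}$ one has $b_n^{\pm}(x)=\sin(\pi x^2)\int_0^\infty g_n^{\pm}(1+it)e^{-\pi x^2 t}\,\mathrm{d}t$, so $\mathfrak{G}\pm\widehat{\mathfrak{G}}$ equals $\sin(\pi x^2)$ times the Laplace transform, evaluated at $\pi x^2$, of $\theta(1+it)^3\,Q\bigl(1/J(1+it)\bigr)$, respectively $\theta(1+it)^3\bigl(1-2\lambda(1+it)\bigr)\,R\bigl(1/J(1+it)\bigr)$, with $Q,R$ polynomials of degree $\le K_0$; this uses the modular identities $g_n^{+}=\theta^3P_n^{+}(1/J)$ and $g_n^{-}=\theta^3(1-2\lambda)P_n^{-}(1/J)$. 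Lemma \ref{lemma modular} shows that $\theta(1+it)^3$ and $1-2\lambda(1+it)$ are nonnegative and that $1/J(1+it)$ is real and monotone on $(0,\infty)$, so the integrand has at most $K_0$ sign changes; Proposition \ref{prop rule of signs} (Descartes' rule for the Laplace transform) then bounds the number of zeros of $\mathfrak{G}\pm\widehat{\mathfrak{G}}$ on $(\sqrt{K_0},\infty)\setminus\sqrt{\N}$ by roughly $K_0$, which is incompatible with vanishing at the $2K_0$ admissible points $t_j$ unless $\mathfrak{G}\equiv0$. This positivity and sign-change input is the idea missing from your proposal; without it (or a genuine substitute controlling $\tilde\psi$ off the positive axis), the injectivity claim is not established.
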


\begin{proof} We begin with the boundedness assertion. As $T_{K_0}$ differs only in the first $K_0$ coordinates from an interation of the shift operator 
\[
s((\{x_i\}_i,\{y_i\}_i) = ((0,x_0,x_1,\dots),(0,y_0,y_1,\dots)),
\]
boundedness follows from boundedness of the operator that maps a pair of sequences $(\{x_i\}_i,\{y_i\}_i) \in \ell^2_s(\N) \times \ell^2_s(\N)$ into
\begin{align*}
((x_0,\mathfrak{G}(t_1),\mathfrak{G}(t_2),\dots,\mathfrak{G}(t_{2K_0}),0, \dots),\cr 
(y_0,\widehat{\mathfrak{G}}(t_1),\widehat{\mathfrak{G}}(t_2),\dots,\widehat{\mathfrak{G}}(t_{2K_0}),0,\dots)). \cr 
\end{align*}
As $\mathfrak{G}, \widehat{\mathfrak{G}} \in L^{\infty}(\R)$ for any pair of sequences $\{x_i\},\{y_i\},$ with bounds depending only on the $\ell^2_s(\N)-$norms of the sequences, it follows that this new finite-rank 
operator is bounded. 

The injectivity part is subtler. Indeed, fix a pair of sequences $(\{x_i\},\{y_i\}) \in \ell^2_s(\N) \times \ell^2_s(\N)$, and suppose that $T_{K_0}(\{x_i\},\{y_i\}) = 0.$ It follows that the 
special function $\mathfrak{G}(t)$ is a linear combination of $a_1,\dots,a_{K_0},\widehat{a_1},\dots,\widehat{a_{K_0}}.$ In order to analyze such functions, we will need to investigate further the intrinsic 
form of the interpolating functions $a_n,$ and thus those of $b_n^{\pm}.$ 

Indeed, it follows from the Fourier expansion of $g_n^{\pm}$ near infinity and the formula 
\[
b_n^{\pm}(x) = \frac{1}{2} \int_{-1}^1 g_n^{\pm}(z) e^{\pi i x^2 z} \, \mmd z
\]
that, whenever $|x| > \sqrt{n},$ it can also be represented as 
\[
b_n^{\pm}(x) = \sin(\pi x^2) \int_0^{\infty} g_n^{\pm}(1+it) e^{-\pi x^2 t} \, \mmd t. 
\]
As $a_n = (b_n^+ + b_n^-)/2$ and $\widehat{a_n} = (b_n^+ - b_n^-)/2,$ we see that the Fourier invariant part of our function $g$ may be written as 
\[
(\mathfrak{G}+\widehat{\mathfrak{G}})(x) = \sin(\pi x^2) \int_0^{\infty} \left( \sum_{j=1}^{K_0} \alpha_{j} g_j^{+}(1+it)\right) e^{-\pi x^2 t} \, \mmd t,
\]
for some sequence $\alpha_j$ of real numbers, and an analogous identity holds for the $-1$-eigenvalue part $\mathfrak{G}- \widehat{\mathfrak{G}},$ with $g_n^{-}$ instead. We recall that the weakly holomorphic modular forms $g_n^{\pm}$ satisfy that 
\begin{align*}
g_n^+(z) &= \theta(z)^3 P_n^+(1/J(z)), \cr  
g_n^-(z) &= \theta(z)^3 (1-2\lambda(z)) P_n^-(1/J(z)),\cr 
\end{align*}
where the monic polynomials $P_n^-,P_n^+$ are of degree $n$. Therefore, there are polynomials $Q,R$ of degree $\le K_0$ such that 
\begin{align}\label{eq laplace1}
\mathfrak{G} +\widehat{\mathfrak{G}} &= \sin(\pi x^2) \int_0^{\infty} \theta(1+it)^3 Q(1+it) \, e^{-\pi x^2 t} \, \mmd t \cr 
\mathfrak{G} - \widehat{\mathfrak{G}} &= \sin(\pi x^2) \int_0^{\infty} \theta(1+it)^3(1-2\lambda(1+it)) R(1+it) \, e^{-\pi x^2 t} \, \mmd t. \cr 
\end{align}
Before moving forward, we need the following result: 

\begin{lemma}\label{lemma modular} The factors $\theta(1+it)^3$ and $(1-2\lambda(1+it))$ do not change sign for $t \in (0,\infty),$ and the function $1/J(1+it)$ is real-valued and monotonic for $t \in (0,\infty).$ 
\end{lemma}

\begin{proof} By using \eqref{eq theta transform}, we get that 
\[
\theta(1+it) = \sum_{n \in \Z} (-1)^n e^{-\pi n^2 t} = \sum_{n \in \Z} e^{-4\pi n^2 t} - \sum_{n \in \Z} e^{-\pi (2n+1)^2 t}. 
\]
We now consider the function $f_t(x) =  e^{-\pi (2x)^2 t}.$ Then the sum above equals 
\[
\sum_{n \in \Z} f_t(n) - \sum_{n\in \Z} f_t(n+1/2).
\]
By the Poisson summation formula, the difference above equals 
\[
\frac{1}{2\sqrt{t}} \left(\sum_{n \in \Z} e^{-\pi \left(\frac{n}{2\sqrt{t}}\right)^2} - \sum_{n \in \Z} e^{\pi i n}  e^{-\pi \left(\frac{n}{2\sqrt{t}}\right)^2} \right) = \frac{1}{\sqrt{t}} \sum_{n \text{ odd}} e^{-\pi \left(\frac{n}{2\sqrt{t}}\right)^2} \ge 0.
\]
This proves the first assertion. 

For the second, we simply see from \eqref{eq lambda} that $\lambda(1+z)$ has only nonpositive coefficients in its $q-$series expansion. This implies that $\lambda(1+it)$ is nonpositive por $t \in (0,\infty),$ which implies that 
$1-2\lambda(1+it)$ is always nonnegative. 

Finally, for the third assertion, we notice that, as $J(1+z) = \frac{1}{16} \lambda(1+z) (1-\lambda(1+z)),$ and thus, from the analysis above, the $q-$series expansion of $J(1+z)$ contains only nonpositive coefficients. Therefore, the function 
$\frac{1}{J(1+it)}$ is nonpositive for $t \in (0,\infty),$ and it is monotonically decreasing there. This finishes the proof. 
\end{proof}

By Lemma \ref{lemma modular}, we get that the part of the integrand in the expressions above multiplying the $e^{-\pi x^2 t}$ factor changes sign at most $K_0 + 1$ times. Notice that we can embed both integrals in \eqref{eq laplace1}
into the framework of Laplace transforms: denoting 
\[
\mathcal{Q}(t) = \theta(1+it)^3 Q(1+it), \, \mathcal{R}(t) = \theta(1+it)^3 (1-2\lambda(1+it))R(1+it),
\]
we are interested in studying the positive zeros of $\mathcal{L}[\mathcal{Q}](\pi x^2), \mathcal{L}[\mathcal{R}](\pi x^2),$ where 
\[
\mathcal{L}[\phi](s) = \int_0^{\infty} \phi(t) e^{-st} \, \mmd t 
\]
denotes the Laplace transform of $\phi$ evaluated at the point $s.$ We may reduce even further our task to studying the positive zeros of 
$\mathcal{L}[\mathcal{Q}], \mathcal{L}[\mathcal{R}].$ The following result, a version of the Descartes rule for the Laplace transform, is the tool we need to bound the number of positive zeros 
of such expressions as a function of their number of changes of signs. 

\begin{proposition}[Descartes rule for the Laplace transform]\label{prop rule of signs} Let $\phi:\R \to \R$ be a smooth function such that its Laplace transform $\mathcal{L}[\phi]$ converges on some open half-plane 
$\text{Re}(s) > s_0.$ Then the number of zeros of $\mathcal{L}[\phi]$ on the interval $(s_0,+\infty)$ is at most the number of sign changes of $\phi.$ 
\end{proposition}

\begin{proof} The proof follows by induction on the number of sign changes of the function $\phi.$ Indeed, if $\phi \ge0,$ it follows easily that the Laplace transform $\mathcal{L}[\phi] \ge 0,$ with equality if and only if $\phi \equiv 0.$ 

Suppose now that $\phi$ changes sign $n+1$ times on $(0,\infty).$  Number its zeros on the positive half-line as $s_0<s_1<\cdots<s_n.$ 
Then $\mathcal{L}[\phi]$ has as many zeros as $e^{s_0 t} \mathcal{L}[\phi](t) = F(t).$ The derivative of $F$ is then given by 
\[
F'(t) = -\int_0^{\infty} (s-s_0)\phi(s) e^{-(s-s_0)t} \, \mmd s = e^{s_0 t} \mathcal{L}[(s-s_0)\phi(s)](t). 
\]
Notice that the new smooth function $(s-s_0) \phi(s)$ still satisfies the same properties as $\phi$, but now has exactly $n$ sign changes. By inductive hypothesis, 
$F'$ has at most $n$ zeros, which, by the mean value theorem, implies that $F$ has at most $n+1$ zeros. This finishes the proof. 
\end{proof}

Using this claim for $\mathcal{Q},\mathcal{R},$ we see that their respective Laplace transform possess at most $K_0$ zeros on the interval $(\sqrt{K_0},+\infty).$ With this information, we can already finish: from \eqref{eq laplace1}, the 
functions $\mathfrak{G}\pm \widehat{\mathfrak{G}}$ can only vanish at at most $K_0$ points on the interval $(\sqrt{K_0},\infty)$ which are not roots of positive integers, in case $\mathfrak{G} \not\equiv 0.$ But, according to our asumption that 
$(\{x_i\},\{y_i\}) \in \text{ker}(T_{K_0}),$ we have $\mathfrak{G}(t_j) = \widehat{\mathfrak{G}}(t_j) = 0, j=1,\dots,2K_0.$ By the properties we chose for the sequence $t_j,$ $\mathfrak{G} \equiv 0,$ and thus the map $T_{K_0}$ is injective.
\end{proof}

We need one more result in order to use our methods to infer results about uniqueness for small powers of integers. In contrast to the full perturbation case of our main theorem, we must prove that the injective operators $T_{K_0}$ are also somewhat stable with respect to
injectivity under perturbations. In order to do this, the following result is essencial. 

\begin{lemma}\label{lemma closed range} The range of $T_{K_0}$ is closed. 
\end{lemma}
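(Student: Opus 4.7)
The plan is to prove Lemma \ref{lemma closed range} by decomposing $T_{K_0}$ as the sum of a Fredholm operator and a compact operator, and invoking the stability of the Fredholm property under compact perturbations. Specifically, I would write $T_{K_0} = G + H$, where the bounded operator $G$ is defined by $G(\{x_i\}, \{y_i\}) = (\{u_j\}, \{v_j\})$ with $u_0 = x_0$, $v_0 = y_0$, $u_j = v_j = 0$ for $1 \le j \le 2K_0$, and $u_j = x_{j - K_0}$, $v_j = y_{j - K_0}$ for $j > 2K_0$; while the finite-rank operator $H$ places the values $\mathfrak{G}(t_j)$ and $\widehat{\mathfrak{G}}(t_j)$ in positions $1, \dots, 2K_0$ of the first and second coordinates, respectively, and zero everywhere else. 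That $T_{K_0} = G + H$ is immediate from the definition of $T_{K_0}$ given in the statement.

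First I would observe that $G$ is Fredholm. Its range is the closed subspace of $\ell^2_s(\N) \times \ell^2_s(\N)$ consisting of pairs of sequences whose coordinates in positions $1, \dots, 2K_0$ all vanish; its kernel is the $2K_0$-dimensional subspace spanned by the standard basis vectors indexed by positions $1, \dots, K_0$ in each coordinate; and its cokernel has dimension $4K_0$. Hence $G$ has closed range together with finite-dimensional kernel and cokernel, and is therefore Fredholm of index $-2K_0$.

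Next I would check that $H$ is compact. As its range is contained in a $4K_0$-dimensional subspace of $\ell^2_s(\N) \times \ell^2_s(\N)$, it suffices to verify boundedness, which reduces to showing that the linear functionals $(\{x_i\}, \{y_i\}) \mapsto \mathfrak{G}(t_j)$ and $(\{x_i\}, \{y_i\}) \mapsto \widehat{\mathfrak{G}}(t_j)$ are continuous on $\ell^2_s(\N) \times \ell^2_s(\N)$. This follows from Cauchy--Schwarz together with Theorem \ref{eq improvement decay}: for $s$ sufficiently large,
$$|\mathfrak{G}(t_j)| \le \|(\{x_i\}, \{y_i\})\|_{(s,s)} \left( \sum_{n \ge 0} (1+n)^{-2s} (|a_n(t_j)|^2 + |\widehat{a_n}(t_j)|^2) \right)^{1/2} \lesssim_{s, t_j} \|(\{x_i\}, \{y_i\})\|_{(s,s)},$$
since $|a_n(t_j)|^2 + |\widehat{a_n}(t_j)|^2 \lesssim n^{1/2} \log^{3}(1+n) e^{-2c t_j / \sqrt{n}}$, and similarly for $\widehat{\mathfrak{G}}(t_j)$.

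Finally, the classical stability theorem quoted from \cite{Schechter} gives that $T_{K_0} = G + H$ is itself Fredholm, with the same index $-2K_0$; in particular, its range is closed, which is precisely the content of Lemma \ref{lemma closed range}. Since by Lemma \ref{lemma inject k_0} the operator $T_{K_0}$ is injective, the Fredholm index computation is in fact consistent with a $2K_0$-dimensional cokernel. The main obstacle in executing this plan is bookkeeping in the decomposition; once the shift-type operator $G$ and the finite-rank part $H$ are identified cleanly, the closed-range property is a formal consequence of standard Fredholm theory.
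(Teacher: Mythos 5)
Your argument is correct, but it takes a genuinely different route from the paper's. The paper proves closedness by hand: it takes a Cauchy sequence $T_{K_0}(\{x_i^j\},\{y_i^j\})$, notes that the input coordinates indexed by $0, K_0+1, K_0+2,\dots$ converge because they appear verbatim in the output, and then recovers the remaining $2K_0$ input coordinates from the values $\mathfrak{G}(t_j), \widehat{\mathfrak{G}}(t_j)$ via the injectivity of the finite matrix $A_{K_0}$ and the resulting lower bound \eqref{eq inject matrix} --- an injectivity that itself rests on the Laplace-transform/Descartes-rule machinery of Lemma \ref{lemma inject k_0}. Your route avoids all of that: writing $T_{K_0}=G+H$ with $G$ a shift-type operator (kernel of dimension $2K_0$, range the closed subspace of codimension $4K_0$ where coordinates $1,\dots,2K_0$ vanish, hence Fredholm of index $-2K_0$) and $H$ bounded of finite rank, hence compact, reduces the lemma to the stability of the Fredholm property under compact perturbations; the only analytic input is the continuity of the evaluation functionals $(\{x_i\},\{y_i\})\mapsto \mathfrak{G}(t_j)$, which your Cauchy--Schwarz computation handles (even the cruder polynomial bounds of \cite{RV} would suffice for this step). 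What each approach buys: yours is softer, entirely independent of Lemma \ref{lemma inject k_0}, and yields the extra structural conclusion that $T_{K_0}$ is Fredholm of index $-2K_0$, so that injectivity forces a cokernel of dimension exactly $2K_0$; the paper's is elementary at the level of sequences and makes explicit the finite-dimensional mechanism that drives the recovery of the missing coordinates. For the way the lemma is used later --- closed range plus the injectivity of Lemma \ref{lemma inject k_0} giving $\|T_{n_0}\mathbf{v}\|_{(s,s)}\ge C_{n_0}\|\mathbf{v}\|_{(s,s)}$ in Corollary \ref{corol diagonal} --- both proofs serve equally well.
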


\begin{proof} Suppose the sequence in $\ell^2_s(\N) \times \ell^2_s(\N)$ given by $\{T_{K_0}(\{x_i^j\},\{y_i^j\})\}_{j \ge 0}$ is a Cauchy sequence. This implies that the sequence
$\{\{x_i^j\}_{i=0,K_0+1,\dots},\{y_i^j\}_{i=0,K_0+1,\dots}\}_{j \ge 0}$ is a Cauchy sequence, and therefore it converges to a certain limiting sequence 
$$\{\{x_i\}_{i=0,K_0+1,\dots},\{y_i\}_{i=0,K_0+1,\dots}\} \in \ell^2_s(\N) \times \ell^2_s(\N).$$
Define, thus, the $4K_0 \times 2K_0$ matrix $A_{K_0}$ given by taking 
\[
(a_1(t_j),a_2(t_j),\dots,a_{K_0}(t_j),\widehat{a_1}(t_j),\widehat{a_2}(t_j),\dots,\widehat{a_{K_0}}(t_j))
\]
and
\[
(\widehat{a_1}(t_j),\widehat{a_2}(t_j),\dots,\widehat{a_{K_0}}(t_j),a_1(t_j),a_2(t_j),\dots,a_{K_0}(t_j))
\]
to be its lines, for $j=1,\dots,2K_0.$ We first claim that this matrix is injective. Indeed, 
\[
\tilde{\mathfrak{G}}(t) = \sum_{i=1}^{K_0} (x_i a_i(t) + y_i \widehat{a_i}(t))
\]
vanishes, together with its Fourier transform, at $t_j$,$j=1,\dots,2K_0,$ where $(\{x_i\}_{i=1}^{K_0},\{y_i\}_{i=1}^{K_0})$ belongs to $\text{ker}(A_{K_0}).$ By the proof of Lemma \ref{lemma inject k_0}, this implies 
$x_i = y_i = 0, i=1,\cdots,K_0.$ 

As $A_{K_0}$is injective, there is a constant $c_{K_0} >0$ so that 
\begin{equation}\label{eq inject matrix}
 \|A_{K_0} \mathbf{v}\|_{4K_0} \ge c_{K_0} \|\mathbf{v}\|_{2K_0},
\end{equation}
where we denote by $\| \cdot \|_{d}$ the usual euclidean norm on a $d-$dimensional space. Translating to our original problem, as  $\{T_{K_0}(\{x_i^j\},\{y_i^j\})\}_{j \ge 0}$ 
is a Cauchy sequence in $\ell^2_s(\N) \times \ell^2_s(\N),$   
$$\{\{x_i^j\}_{i=0,K_0+1,\dots},\{y_i^j\}_{i=0,K_0+1,\dots}\}_{j \ge 0}$$
is a convergent sequence, and thus we get that the sequences 
\[
\sum_{i=1}^{K_0} (x_i^k a_i(t_j) + y_i^k \widehat{a_i}(t_j)), \, j=1,\dots,2K_0
\]
are also Cauchy in $k \ge 0$. By \eqref{eq inject matrix}, $(\{x_i^k\}_{i=1}^{K_0},\{y_i\}_{i=1}^{K_0})_{k \ge 0}$ is Cauchy. 
This implies that there is a limiting sequence $(\{x_i\},\{y_i\}) \in \ell^2_s(\N) \times \ell^2_s(\N)$ so that 
\[
T_{K_0}(\{x_i^j\},\{y_i^j\}) \to T_{K_0}(\{x_i\},\{y_i\}), \, \text{ as } j \to \infty.
\]
This finishes the proof. 
\end{proof}

We are finally able to prove the following uniqueness result: 

\begin{corollary}\label{corol diagonal} Let $\alpha \in (0,\frac{2}{9}).$ There exists $c_{\alpha} > 0$ so that $\forall c < c_{\alpha},$ if $f \in \mathcal{S}_{even}(\R)$ is a real function that vanishes together with its Fourier transform 
at $\pm c_{\alpha} n^{\alpha},$ then $f \equiv 0.$ 
\end{corollary}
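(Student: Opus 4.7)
The plan is to reduce Corollary~\ref{corol diagonal} to Theorem~\ref{thm mainthm} via an arithmetic matching of the sequence $\{c n^\alpha\}_{n \ge 1}$ to a subsequence of admissible perturbed Radchenko--Viazovska nodes $\{\sqrt{k+\eps_k}\}_{k \ge 1}$.

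The first step is the matching lemma. Let $\delta>0$ be the threshold from Theorem~\ref{thm mainthm}. I would show that, for $\alpha<2/9$ and $c>0$ sufficiently small, for every $k \ge 1$ there exists a positive integer $n_k$ with
\[
|c^2 n_k^{2\alpha} - k| \le \delta (1+k)^{-5/4}.
\]
The existence reduces to noting that the gap between consecutive values of $c^2 n^{2\alpha}$ near level $k$ is of order $2\alpha c^{1/\alpha} k^{1-1/(2\alpha)}$; demanding this gap be at most $2\delta k^{-5/4}$ amounts to $c^{1/\alpha} k^{9/4 - 1/(2\alpha)} \lesssim \delta$, which holds uniformly in $k \ge 1$ precisely when $9/4 - 1/(2\alpha) \le 0$, i.e.\ $\alpha \le 2/9$, provided one also takes $c \le c_\alpha := (\delta/\alpha)^\alpha$. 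Since $\alpha<1/2$, the image $\{n_k\}_{k\ge 1}$ has density zero in $\N$, so infinitely many positive integers $m$ lie outside this image.

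With $\eps_0 := 0$ and $\eps_k := c^2 n_k^{2\alpha} - k$ for $k \ge 1$, the sequence $\{\eps_k\}$ satisfies the hypotheses of Theorem~\ref{thm mainthm}, and $\sqrt{k+\eps_k}= c n_k^\alpha$. The main theorem then produces functions $\theta_j, \eta_j$ so that every real even Schwartz $f$ admits
\[
f(x) = \sum_{j \ge 0} \bigl( f(\sqrt{j+\eps_j})\, \theta_j(x) + \widehat{f}(\sqrt{j+\eps_j})\, \eta_j(x) \bigr),
\]
which, under our hypothesis $f(\pm c n^\alpha) = \widehat{f}(\pm c n^\alpha) = 0$ for all $n \ge 1$, collapses to $f(x) = f(0)\theta_0(x) + \widehat{f}(0) \eta_0(x)$.

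It remains to show $f(0) = \widehat{f}(0) = 0$. Plugging $x = c m^\alpha$ for each $m \notin \{n_k\}_{k \ge 1}$ (of which there are infinitely many by the density observation above) and using both $f(c m^\alpha)=0$ and $\widehat{f}(c m^\alpha)=0$ yields the linear system
\[
\begin{pmatrix} \theta_0(c m^\alpha) & \eta_0(c m^\alpha) \\ \widehat{\theta_0}(c m^\alpha) & \widehat{\eta_0}(c m^\alpha) \end{pmatrix} \begin{pmatrix} f(0) \\ \widehat{f}(0) \end{pmatrix} = 0.
\]
Exhibiting one such $m$ at which the determinant $W(x) := \theta_0(x)\widehat{\eta_0}(x) - \eta_0(x)\widehat{\theta_0}(x)$ is nonzero would force $f(0) = \widehat{f}(0) = 0$, hence $f \equiv 0$. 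The hard part is precisely this verification: $W$ is defined only implicitly through inversion of the operator $\tilde{T}$, but the interpolation identities built into the construction of Theorem~\ref{thm mainthm} give $\theta_0(0) = \widehat{\eta_0}(0) = 1$ and $\eta_0(0) = \widehat{\theta_0}(0) = 0$, whence $W(0)=1 \neq 0$. Since $\theta_0, \eta_0$ inherit real-analyticity from the Laplace--modular structure underlying the Radchenko--Viazovska basis, $W$ has a discrete zero set and thus cannot vanish at every $c m^\alpha$ in the infinite complementary sequence; selecting any $m \notin \{n_k\}$ with $W(c m^\alpha) \neq 0$ (available for all but possibly a discrete exceptional subset of admissible $c$, which can be avoided by a further arbitrarily small shrinking of $c_\alpha$) concludes the argument.
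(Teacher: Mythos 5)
Your matching step and the reduction of the nodes $k\ge 1$ to Theorem \ref{thm mainthm} are correct, and the exponent computation ($9/4-1/(2\alpha)\le 0\iff\alpha\le 2/9$, with smallness of $c$ absorbing the $k=1$ case) is exactly the arithmetic the paper uses. But the paper does \emph{not} then invoke Theorem \ref{thm mainthm}: it builds the auxiliary operators $T_{K_0}$ and $\mathcal{T}_{n_0(\alpha)}$, proves injectivity and closed range via a Laplace-transform/Descartes-rule analysis of finite linear combinations of the $a_n,\widehat{a_n}$ (Lemma \ref{lemma inject k_0}, Lemma \ref{lemma closed range}, Proposition \ref{prop rule of signs}), and then perturbs in Hilbert--Schmidt norm. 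That extra machinery exists precisely to absorb the finitely many leftover degrees of freedom that your reduction also produces, and this is where your proposal has a genuine gap.

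The gap is the final step, where you must kill $f(0)$ and $\widehat f(0)$. Three claims there are unsupported. First, $\theta_0(0)=\widehat{\eta_0}(0)=1$ and $\eta_0(0)=\widehat{\theta_0}(0)=0$ do not follow from the construction: the identity $f(x)=\sum_j\bigl(f(\sqrt{j+\eps_j})\theta_j(x)+\widehat f(\sqrt{j+\eps_j})\eta_j(x)\bigr)$ is only known for value-pairs in the image of the (codimension-one, Poisson-constrained) Radchenko--Viazovska map, so evaluating at $x=0$ determines $\theta_j(0),\eta_j(0)$ only modulo a functional annihilating that image; computing directly from $\theta_0=\sum_k(\gamma_{0,k}a_k+\widehat{\gamma}_{0,k}\widehat{a_k})$ together with $a_0(0)=\widehat{a_0}(0)=\tfrac12$, $a_{n^2}(0)=-1$, $\widehat{a_{n^2}}(0)=1$ gives $\theta_0(0)=\tfrac12+\sum_{n\ge1}(\widehat{\gamma}_{0,n^2}-\gamma_{0,n^2})$, over which you have no control. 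Second, real-analyticity of $\theta_0,\eta_0$ (infinite sums of the $a_k$ with only polynomially decaying coefficients) is asserted, not proved. Third, and most seriously, even granting both, ``discrete zero set'' does not finish: the candidate set $\{cm^\alpha:m\notin\{n_k\}\}$ is itself discrete and unbounded, and a nonzero real-analytic function can vanish on all of it; nor can you discard bad values of $c$ by shrinking $c_\alpha$, both because $\theta_0,\eta_0,W$ themselves depend on $c$ through the $\eps_k$, and because the corollary is asserted for every $c<c_\alpha$. What is missing is a quantitative mechanism exploiting the redundancy of $\{cn^\alpha\}$ to control a finite-dimensional ambiguity --- this is exactly what the paper's doubling of nodes ($2K_0$ points for $K_0$ unknowns) combined with the sign-change count for Laplace transforms of $\theta^3 Q(1/J)$ supplies, and your argument would need to import something of that kind.
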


\begin{proof} We start by noticing that, whenever $n \in \N$ is sufficiently large, then there is $m \in \N$ so that $|\sqrt{n} - c m^{\alpha}| \lesssim c^{\frac{1}{\alpha}} n^{\frac{\alpha-1}{2\alpha}}.$ Indeed, 
we simply let $m= \lfloor (n/c^2)^{\frac{1}{2\alpha}} \rfloor.$ We get that 
\begin{align*}
|\sqrt{n} - c m^{\alpha}| = c \alpha \int_{\lfloor (n/c^2)^{1/(2\alpha)} \rfloor}^{(n/c^2)^{1/(2\alpha)}} t^{\alpha - 1} \, \mmd t \lesssim c^{1/\alpha} \alpha n^{\frac{\alpha-1}{2\alpha}}. 
\end{align*}
In particular, if $\frac{\alpha - 1}{2\alpha} < - \frac{5}{4} - \frac{1}{2} \iff \alpha < \frac{2}{9},$ then for all $n \ge n_0(\alpha),$ there exists $m \in \N$ so that we can write 
$m^{\alpha} = \sqrt{n + \eps_n},$ where $\eps_n$ satisfies the conditions of Theorem \ref{thm mainthm}. Let us single out the sequence of numbers selected above, which we index as $\{m(n)^{\alpha}\}_{n \ge n_0(\alpha)}.$ 
We then consider the operator $T_{n_0(\alpha)}$ associated to some sequence of $2n_0(\alpha)$ 
positive real numbers $t_j, j =1,\dots,2n_0(\alpha),$ satisfying the hypotheses of Lemma \ref{lemma inject k_0}. 

We claim that the \emph{perturbed} operator 
\begin{align}\label{eq perturbed new}
\tilde{T}_{n_0(\alpha)} &: \ell^2_s(\N) \times \ell^2_s(\N) \to \ell^2_s(\N) \times \ell^2_s(\N)  \cr 
			& (\{x_i\},\{y_i\}) \mapsto ((x_0,\mathfrak{G}(t_1),\mathfrak{G}(t_2),\dots,\mathfrak{G}(t_{2n_0}),\mathfrak{G}(m(n_0+1)^{\alpha}),\mathfrak{G}(m(n_0+2)^{\alpha}),\dots),\cr
			&                            (y_0,\widehat{\mathfrak{G}}(t_1),\widehat{\mathfrak{G}}(t_2),\dots,\widehat{\mathfrak{G}}(t_{2n_0}),\widehat{\mathfrak{G}}(m(n_0+1)^{\alpha}),\widehat{\mathfrak{G}}(m(n_0+2)^{\alpha}),\dots))\cr
\end{align}
is injective. Indeed, from Lemma \ref{lemma closed range} there must exist a constant $C_{n_0}$ so that 
\[
\|T_{n_0} \mathbf{v}\|_{(s,s)} \ge C_{n_0} \|\mathbf{v}\|_{(s,s)}                   
\]
holds for all $\mathbf{v} \in \ell^2_s(\N) \times \ell^2_s(\N).$ But, by the same calculation as in the previous subsection, we have that 
\[
\|\tilde{T}_{n_0(\alpha)} - T_{n_0(\alpha)}\|_{HS(\ell^2_s(\N) \times \ell^2_s(\N))} < C_{n_0}/2
\]
holds, as long as we take $\alpha < \frac{2}{9}$ and $c=c(\alpha)$ sufficiently small. This implies, in particular, that 
\[
\|\tilde{T}_{n_0} \mathbf{v}\||_{(s,s)} \ge \frac{C_{n_0}}{2} \|\mathbf{v}\|_{(s,s)},
\]
for each $\mathbf{v} \in \ell^2_s(\N) \times \ell^2_s(\N),$ and thus the operator $\tilde{T}_{n_0}$ is, indeed, injective, as desired. 

In order to conclude, we notice that the operator 
\begin{align}\label{eq perturbed new2}
& \mathcal{T}_{n_0(\alpha)} : \ell^2_s(\N) \times \ell^2_s(\N) \to \ell^2_s(\N) \times \ell^2_s(\N)  \cr 
			& (\{x_i\},\{y_i\}) \mapsto ((x_0,\mathfrak{G}(ck_1^{\alpha}),\mathfrak{G}(ck_2^{\alpha}),\dots,\mathfrak{G}(ck_{2n_0}^{\alpha}),\mathfrak{G}(m(n_0+1)^{\alpha}),\mathfrak{G}(m(n_0+2)^{\alpha}),\dots),\cr
			&                            (y_0,\widehat{\mathfrak{G}}(ck_1^{\alpha}),\widehat{\mathfrak{G}}(ck_2^{\alpha}),\dots,\widehat{\mathfrak{G}}(ck_{2n_0}^{\alpha}),\widehat{\mathfrak{G}}(m(n_0+1)^{\alpha}),\widehat{\mathfrak{G}}(m(n_0+2)^{\alpha}),\dots))\cr
\end{align}
for some sequence $k_j, j=1,\dots,2n_0$ of integers not belonging to the sequence $m(n)$ we selected above, is still injective. In fact, it only differs from the operator $\tilde{T}_{n_0}$ in at most the first
$2n_0$ entries. But, on the other hand, for $k_j = \lfloor (t_j/c)^{1/\alpha} \rfloor, j =1,\dots,2n_0,$ and $c>0$ sufficiently small, we see that 
\begin{align*}
|\mathfrak{G}(ck_j^{\alpha}) - \mathfrak{G}(t_j)| &\le \sum_{i=0}^{\infty} (x_i|a_i(t_j) - a_i(ck_j^{\alpha})| + y_i|\widehat{a_i}(t_j) - \widehat{a_i}(ck_j^{\alpha})|) \cr 
                            &\lesssim \sup_{j \in [0,2n_0]} |t_j - ck_j^{\alpha}| \left(\sum_{i=0}^{\infty} i^{5/2} (|x_i| + |y_i|) \right) \cr 
                            &\lesssim \epsilon \| (\{x_i\},\{y_i\})\|_{(s,s)}.
\end{align*}
For $\epsilon > 0$ sufficiently small, we see from the previous argument that $\mathcal{T}_{n_0(\alpha)}$ still has closed range and is injective. This readily implies that the sequence 
$(\{f(\pm n^{\alpha})\},\{\widehat{f}(\pm n^{\alpha})\})$ determines uniquely the sequence $(\{f(\sqrt{n})\},\{\widehat{f}(\sqrt{n})\}).$ This finishes the proof. 
\end{proof}

One can inquire about the importance of such a result, as in \cite{RS1} we have shown that the uniqueness result stated in Corollary \ref{corol diagonal} 
hold for $\alpha \in (0,1-\sqrt{2}/2),$ which is significantly larger than the range stated here. Nonetheless, Corollary \ref{corol diagonal} 
gives us \emph{automatic} results. Indeed, if one manages to prove that for all $\delta >0$ there is $\epsilon > 0$ so that, if $|\eps_k| \le \epsilon, \, 
\forall k \in \N,$ then 
\[
\|I-\tilde{T}\|_{op} < \delta,
\]
it implies automatically that we can extend the results in Corollary \ref{corol diagonal} to the full diagonal range $\alpha \in (0,1/2).$ 

We also note that Corollary \ref{corol diagonal} is not all we can say about the problem of determining the best exponents $(\alpha,\beta)$ so that 
\[
f(\pm n^{\alpha}) = \widehat{f}(\pm n^{\beta}) = 0, \, f \in \mathcal{S}_{even}(\R) \Rightarrow f \equiv 0. 
\]
Indeed, we can easily go further than the diagonal case exposed above: if $\alpha, \beta \in (0,2/9)$ are an arbitrary pair of exponents, we notice that we can still
pick $n_0 \in \N$ so that for each $n > n_0 = n_0(\alpha,\beta),$ there exists a pair $(m_1(n),m_2(n)) \in \N^2$ so that 
\[
|cm_1(n)^{\alpha} - \sqrt{n}| + |cm_2(n)^{\beta} - \sqrt{n}| \lesssim c^{1/\alpha} \alpha n^{\frac{\alpha - 1}{2\alpha}} + c^{1/\beta} \beta n^{\frac{\beta-1}{2\beta}}.
\]
This induces us to consider the operator 
\begin{align}\label{eq perturbed newtwo}
&\mathcal{T}_{n_0(\alpha,\beta)} : \ell^2_s(\N) \times \ell^2_s(\N) \to \ell^2_s(\N) \times \ell^2_s(\N)  \cr 
			& (\{x_i\},\{y_i\}) \mapsto ((x_0,\mathfrak{G}(ck_1^{\alpha}),\mathfrak{G}(ck_2^{\alpha}),\dots,\mathfrak{G}(ck_{2n_0}^{\alpha}),\mathfrak{G}(m_1(n_0+1)^{\alpha}),\mathfrak{G}(m_1(n_0+2)^{\alpha}),\dots),\cr
			&                            (y_0,\widehat{\mathfrak{G}}(cl_1^{\beta}),\widehat{\mathfrak{G}}(cl_2^{\beta}),\dots,\widehat{\mathfrak{G}}(cl_{2n_0}^{\beta}),\widehat{\mathfrak{G}}(m_2(n_0+1)^{\beta}),\widehat{\mathfrak{G}}(m_2(n_0+2)^{\beta}),\dots))\cr
\end{align}
for two sequences of integers $(k_j,l_j), j=1,\dots,2n_0,$ so that $|t_j - ck_j^{\alpha}| + |t_j - cl_j^{\beta}|$ is sufficiently small for all $j \in [0,2n_0],$ 
where we select $t_j, j=1,\dots,2n_0$ satisfying the hypotheses of Lemma \ref{lemma inject k_0}.

By the same strategy outlined in the proof of Corollary \ref{corol diagonal}, the Hilbert-Schmidt norm as operators acting on $\ell^2_s(\N) \times \ell^2_s(\N)$ of the difference 
$T_{n_0(\alpha,\beta)} - \mathcal{T}_{n_0(\alpha,\beta)}$ is arbitrarily small, as long as we make the value of $c=c(\alpha,\beta)$ smaller. As a consequence, $\mathcal{T}_{n_0}$ is 
also injective and its range is closed. These considerations prove, therefore, the following:

\begin{corollary} Let $\alpha,\beta \in (0,2/9).$ Then there is $c_{\alpha,\beta} > 0$ so that for all $c < c_{\alpha,\beta},$ if $f \in \mathcal{S}_{even}(\R)$ is a real function that vanishes 
at $\pm c n^{\alpha}$ and its Fourier transform vanishes at $\pm c n^{\beta},$ then $f \equiv 0.$ 
\end{corollary}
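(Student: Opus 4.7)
The plan is to adapt the argument used for Corollary \ref{corol diagonal} to the off-diagonal setting $\alpha \neq \beta$. The key point is that the constraint $\alpha,\beta < 2/9$ is exactly what makes the approximation errors $c^{1/\alpha}\alpha n^{(\alpha-1)/(2\alpha)}$ and $c^{1/\beta}\beta n^{(\beta-1)/(2\beta)}$ decay faster than $n^{-5/4-1/2}$ on each side, which is the decay rate needed in order to apply the Hilbert--Schmidt estimates that were established for the proof of Claim \ref{claim HS} and reused in the proof of Corollary \ref{corol diagonal}. Thus each of the spatial and frequency sides is individually a valid perturbation of the square root grid in the sense of Theorem \ref{thm mainthm}; the novelty is only that we allow two \emph{independent} perturbation sequences, one on the space side and one on the Fourier side.

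First I would fix $n_0 = n_0(\alpha,\beta)$ large enough and pick, for each $n > n_0$, integers $m_1(n),m_2(n)$ minimizing $|cm_1(n)^\alpha - \sqrt{n}|$ and $|cm_2(n)^\beta - \sqrt{n}|$ respectively. Next, I would pick real numbers $t_1 < t_2 < \cdots < t_{2n_0}$ all greater than $\sqrt{n_0}$ and none of the form $\sqrt{k}$, $k \in \N$, so that Lemma \ref{lemma inject k_0} applies to $T_{n_0}$. For $c$ small, I would choose integers $k_j,\ell_j$ with $k_j=\lfloor(t_j/c)^{1/\alpha}\rfloor$, $\ell_j=\lfloor(t_j/c)^{1/\beta}\rfloor$ avoiding the sequences $\{m_1(n)\},\{m_2(n)\}$, so the operator $\mathcal{T}_{n_0(\alpha,\beta)}$ in \eqref{eq perturbed newtwo} is well-defined. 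Then I would split
\[
\mathcal{T}_{n_0(\alpha,\beta)} - T_{n_0(\alpha,\beta)} = \mathcal{P}_{\mathrm{tail}} + \mathcal{P}_{\mathrm{head}},
\]
where $\mathcal{P}_{\mathrm{tail}}$ corresponds to the perturbations at indices $n > n_0$, and $\mathcal{P}_{\mathrm{head}}$ is the finite-rank perturbation on the first $2n_0$ coordinates.

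The tail term $\mathcal{P}_{\mathrm{tail}}$ is the two-sided analogue of the operator estimated in Claim \ref{claim HS}: each coordinate $k > n_0$ contributes a difference $a_n(\sqrt{k}) - a_n(cm_1(k)^\alpha)$ (resp.\ with $\widehat{a_n}$ and $m_2$), and Theorem \ref{eq improvement decay} together with the fundamental theorem of calculus bounds it by $C|cm_1(k)^\alpha - \sqrt{k}|\, k^{-1/2}n^{3/4}e^{-c'\sqrt{k/n}}$, and analogously for the frequency side. Plugging in the bound $|cm_1(k)^\alpha-\sqrt{k}|+|cm_2(k)^\beta-\sqrt{k}| \lesssim_{\alpha,\beta} c^{1/\alpha}k^{(\alpha-1)/(2\alpha)} + c^{1/\beta}k^{(\beta-1)/(2\beta)}$ and using $\alpha,\beta<2/9$, the same Riemann-sum computation as in Claim \ref{claim HS} gives
\[
\|\mathcal{P}_{\mathrm{tail}}\|_{HS(\ell^2_s(\N)\times\ell^2_s(\N))} \lesssim_{\alpha,\beta,s,n_0} c^{\min(1/\alpha,1/\beta)},
\]
which can be made arbitrarily small by shrinking $c$. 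The head term $\mathcal{P}_{\mathrm{head}}$ is treated exactly as in the last display of the proof of Corollary \ref{corol diagonal}: uniform pointwise continuity of $\mathfrak{G}$ and $\widehat{\mathfrak{G}}$ at $t_j$ together with $|t_j - ck_j^\alpha| + |t_j - c\ell_j^\beta|\to 0$ as $c\to 0$ yields $\|\mathcal{P}_{\mathrm{head}}\|_{op} \lesssim_{n_0}\max_j(|t_j-ck_j^\alpha|+|t_j-c\ell_j^\beta|)$.

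Having made both perturbations smaller than half of the lower bound constant $C_{n_0}$ provided by Lemma \ref{lemma closed range} applied to $T_{n_0(\alpha,\beta)}$, I conclude that $\mathcal{T}_{n_0(\alpha,\beta)}$ is bounded below and hence injective with closed range. Applying this to the sequence $(\{f(\sqrt{k})\}_k,\{\widehat{f}(\sqrt{k})\}_k)$ associated to a real $f\in\mathcal{S}_{\mathrm{even}}(\R)$ vanishing at $\pm cn^\alpha$ together with $\widehat{f}$ vanishing at $\pm cn^\beta$ forces this sequence to be zero, which by \eqref{eq radchenko viazovska} yields $f\equiv 0$. The main obstacle will be the verification that $\mathcal{P}_{\mathrm{tail}}$ is still Hilbert--Schmidt with small norm in the off-diagonal case, since the asymmetric decay rates on space and frequency sides have to be combined cleanly; the condition $\alpha,\beta<2/9$ is precisely what ensures the relevant integrals converge in both factors simultaneously.
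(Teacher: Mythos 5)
Your proposal is correct and follows essentially the same route as the paper: select approximating integers $m_1(n),m_2(n)$ on the space and frequency sides separately, note that $\alpha,\beta<2/9$ gives the $n^{-7/4-\delta}$ decay needed for the Hilbert--Schmidt bound of Claim \ref{claim HS}, handle the first $2n_0$ coordinates as a small finite-rank perturbation of the injective, closed-range operator $T_{n_0}$, and conclude injectivity of $\mathcal{T}_{n_0(\alpha,\beta)}$. The only difference is presentational: you make explicit the head/tail splitting that the paper leaves implicit in its appeal to ``the same strategy outlined in the proof of Corollary \ref{corol diagonal}.''
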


\noindent\textbf{Remark.} In the end, we do not quite attain the primary goal of this section of proving Fourier uniqueness results for the sequences $(\{\pm n^{\alpha}\},\{\pm n^{\beta}\}),$ but only a slightly 
weaker version of it, with a small constant $c(\alpha,\beta)$ in front. The main reason for that in the proofs above is the location of the positive reals $t_i:$ although their 
exact values do not matter in the end, it is crucial, in order to use Proposition \ref{prop rule of signs}, that they lie \emph{after} the node $n_0.$ We must therefore either force $n_0$ not to be
too large in order not to make the norm of the matrix $A_{K_0}$ too small, or fix them from the beginning and make the perturbations of $T_{K_0}$ fall closer to it. In any case, this implies 
nontrivial use of the constant $c$ multiplying the sequences $(\{\pm n^{\alpha}\},\{\pm n^{\beta}\}).$ 

We believe that further studying operators resembling $T_{K_0}$ above and their injectivity properties could yield better results in this regard. In order not to make this exposition 
even longer, we will not pursue this matter any further.

\subsection{The Cohn-Kumar-Miller-Radchenko-Viazovska result and perturbed interpolation formulae with derivatives}\label{sec ckmrv}

As another illustration of our main technique, we prove that the interpolation formulae with derivatives in dimension $8$ and $24$ from \cite{CKMRV2} can be suitably 
perturbed. 

Indeed, we first recall one of the main results of \cite{CKMRV2}: let $(d,n_0)$ be either $(8,1)$ or $(24,2).$ Then every $f \in \mathcal{S}_{rad}(\R^d)$ can be uniquely recovered by the sets of values 
\[
\{ f(\sqrt{2n}), f'(\sqrt{2n}), \widehat{f}(\sqrt{2n}), \widehat{f}'(\sqrt{2n})\}, \, n \ge n_0,
\]
through the interpolation formula
\begin{align}\label{eq interpol derivative} 
f(x) & = \sum_{n \ge n_0} f(\sqrt{2n}) a_n(x) + \sum_{n \ge n_0} f'(\sqrt{2n}) b_n(x) \cr
     & + \sum_{n \ge n_0} \widehat{f}(\sqrt{2n}) \widehat{a_n}(x) +  \sum_{n \ge n_0} \widehat{f}'(\sqrt{2n}) \widehat{b_n}(x). \cr 
\end{align}
We also have uniform estimates on the functions $a_n,\widehat{a_n},b_n,\widehat{b_n}:$ indeed, there is $\tau > 0$ so that 
\begin{equation}\label{eq uniform derivatives}
\sup_{l \in \{0,1,2\}} \sup_{x \in \R^d} (1+|x|)^{100} \left(|a_n^{(l)}(x)| + |\widehat{a_n}^{(l)}(x)| + |b_n^{(l)}(x)| + |\widehat{b_n}^{(l)}(x)|\right) \lesssim n^{\tau},
\end{equation}
for all $n \in \N.$ Here and throughout this section, we shall denote by $g'(x)$ the derivative of the (radial) function $g$ regarded as a one-dimensional function. 

By \cite[Theorem~1.9]{CKMRV2}, we know that the matrices
\begin{equation}\label{eq matrix derivate}
M_n(x) = \begin{pmatrix}
       a_n(x) & a_n'(x) & \widehat{a_n}(x) & \widehat{a_n}'(x) \\ 
       b_n(x) & b_n'(x) & \widehat{b_n}(x) & \widehat{b_n}'(x) \\
       \widehat{a_n}(x) & \widehat{a_n}'(x) & a_n(x) & a_n'(x) \\
       \widehat{b_n}(x) & \widehat{b_n}'(x) & b_n(x) & b_n'(x) \\
       \end{pmatrix}
\end{equation}
satisfy that $M_n(\sqrt{2m}) = \delta_{mn} I_{4 \times 4}.$ As we know that the map that takes a vector of sufficiently rapidly decaying sequences 
$$(\{\alpha_n\},\{\beta_n\},\{\tilde{\alpha}_n\},\{\tilde{\beta}_n\})$$
onto the function 
\[
\mathfrak{f}(x) = \sum_{n \ge n_0} \left( \alpha_n a_n(x) + \beta_n b_n(x) + \tilde{\alpha}_n \widehat{a_n}(x) + \tilde{\beta}_n \widehat{b_n}(x) \right)
\]
is, in fact, injective (and moreover an isomorphism if we consider the set of all arbitrarily rapidly decaing sequences), we shall make use of this function in our estimates. Indeed, 
we have that the map that takes the quadruple of sequences $$(\{\alpha_n\},\{\beta_n\},\{\tilde{\alpha}_n\},\{\tilde{\beta}_n\})$$ onto 
\[
(\ff(\sqrt{2n}),\ff'(\sqrt{2n}),\widehat{\ff}(\sqrt{2n}), \widehat{\ff}'(\sqrt{2n}))_{n \ge n_0} 
\]
is, in fact, the identity. Another way to represent this map is as the series 
\[
\sum_{n \ge n_0} (\alpha_n,\beta_n,\tilde{\alpha}_n,\tilde{\beta}_n) \cdot M_n(\sqrt{2n}).
\]
We define, therefore, the operator that takes the same quadruple onto 
\[
(\ff(\sqrt{2n+\eps_n}),\ff'(\sqrt{2n+\eps_n}),\widehat{\ff}(\sqrt{2n+\eps_n}), \widehat{\ff}'(\sqrt{2n+\eps_n}))_{n \ge n_0}.
\]
In the alternative notation, this operator, which we shall denote by $\mathfrak{T},$ is given by 
\[
\sum_{n \ge n_0} (\alpha_n,\beta_n,\tilde{\alpha}_n,\tilde{\beta}_n) \cdot M_n(\sqrt{2n+\eps_n}).
\]

As before, we seek to prove that $\mathfrak{T}$ is invertible when defined over some space 
\[
\ell^2_s(\N) \times \ell^2_s(\N) \times \ell^2_s(\N) \times \ell^2_s(\N) =: (\ell^2_s(\N))^4,
\]
where we may take $s \gg 1$ sufficiently large. As our aim here is not to establish the sharpest possible results, but only to prove that we may perturb the aforementioned interpolation formulae, we 
shall make use of the Hilbert--Schmidt test, as in \S \ref{sec origin} above. Indeed, we wish to prove that 
\[
\| I - \mathfrak{T} \|_{HS((\ell^2_s(\N))^4)} < 1.
\]
A simple computation with the Hilbert--Schmidt norm using \eqref{eq matrix derivate} shows that this quantity is bounded by 
\begin{align*}
\sum_{m,n >0} & m^{2s} n^{-2s} ((|a_n(\sqrt{2m}) - a_n(\sqrt{2m+\eps_m})|^2 + |\widehat{a_n}(\sqrt{2m})- \widehat{a_n}(\sqrt{2m+\eps_m})|^2 + \cr 
	      & + |a_n'(\sqrt{2m}) - a_n'(\sqrt{2m+\eps_m})|^2 + |\widehat{a_n}'(\sqrt{2m}) - \widehat{a_n}'(\sqrt{2m+\eps_m})|^2 + \cr
	      & |b_n(\sqrt{2m}) - b_n(\sqrt{2m+\eps_m})|^2 + |\widehat{b_n}(\sqrt{2m}) - \widehat{b_n}(\sqrt{2m + \eps_m})|^2 + \cr 
	      & + |b_n'(\sqrt{2m}) - b_n'(\sqrt{2m+\eps_m})|^2 + |\widehat{b_n}'(\sqrt{2m}) - \widehat{b_n}'(\sqrt{2m+\eps_m})|^2).\cr
\end{align*}
By \eqref{eq uniform derivatives} and the mean value theorem, the sum above is bounded by (an absolute constant times) 
\[
\sum_{m,n > 0} m^{2s} n^{-2s} \times m^{-100} n^{2\tau} \eps_m^2.
\]
The sum above is representable as a product of a sum in $m$ and one in $n.$ The one in $n$ is convergent if $s > \tau + 1.$ We then fix such a value of $s.$ For such values, the second sum is 
\[
\sum_{m > 0} m^{2s-100} \eps_m^2,
\]
which converges in case $\eps_m \lesssim m^{49-s}.$ For all such sequences, the difference $I - \mathfrak{T}$ is a Hilbert--Schmidt operator. Moreover, if $\eps_m \le \delta m^{49-s}$ for $\delta > 0$ 
sufficiently small, we will have $\|I-\mathfrak{T}\|_{HS(\ell^2_s(\N)^4)} < 1.$ Summarizing, we have shown the following result: 

\begin{theorem}\label{thm perturb derivatives} There is $C_0 > 0$ so that the following holds: there is $\delta >0$ so that, for each sequence $\eps_k$ so that $|\eps_k| < \delta k^{-C_0},$ then any function 
$f \in \mathcal{S}_{rad}(\R^d)$ is uniquely determined by the values 
\begin{equation}\label{eq perturbed derivative wow}
\left(f(\sqrt{2n+\eps_n}),f'(\sqrt{2n+\eps_n}), \widehat{f}(\sqrt{2n+\eps_n}), \widehat{f}'(\sqrt{2n+\eps_n})\right)_{n \ge n_0},
\end{equation}
where we let $(d,n_0) = (8,1) \text{ or } (24,2).$ 
\end{theorem}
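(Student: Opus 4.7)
The plan is to transcribe the Hilbert--Schmidt perturbation strategy already employed for the Radchenko--Viazovska formula (see \S \ref{sec origin}) into the derivative-valued setting, using the $4 \times 4$ evaluation matrices $M_n$ from \eqref{eq matrix derivate}. The structural input that makes this possible is the identity $M_n(\sqrt{2m}) = \delta_{mn} I_{4 \times 4}$, which says that the unperturbed evaluation operator on quadruples of sequences is literally the identity. Hence $\mathfrak{T}$ is a small perturbation of $I$ whose Hilbert--Schmidt norm we can try to estimate entry by entry.

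First, I would fix $s \gg 1$ (any $s > \tau + 1$ will do, where $\tau$ is the exponent in \eqref{eq uniform derivatives}), define $\mathfrak{T}$ on $(\ell^2_s(\N))^4$ as in the excerpt above, and write $I - \mathfrak{T}$ in the natural orthonormal basis. Each entry of $I - \mathfrak{T}$ is, up to the normalising weight $(m/n)^s$, a difference of the form $a_n^{(l)}(\sqrt{2m}) - a_n^{(l)}(\sqrt{2m+\eps_m})$ (or the analogous quantity for $b_n$, $\widehat{a_n}$, $\widehat{b_n}$) with $l \in \{0,1\}$. Applying the mean value theorem together with the $l=2$ case of \eqref{eq uniform derivatives}, which gives uniform Lipschitz control on the first derivatives $a_n', b_n', \widehat{a_n}', \widehat{b_n}'$, one obtains an entrywise bound
\[
\bigl|(M_n(\sqrt{2m}) - M_n(\sqrt{2m+\eps_m}))_{ij}\bigr| \lesssim |\eps_m|\, (1+\sqrt{2m})^{-100}\, n^{\tau}.
\]

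Second, I would insert this entrywise bound into the definition of the Hilbert--Schmidt norm on $(\ell^2_s(\N))^4$, exactly as in the proof of Claim \ref{claim HS}, to obtain
\[
\|I - \mathfrak{T}\|_{HS((\ell^2_s(\N))^4)}^2 \lesssim \Bigl(\sum_{n \ge n_0} n^{2\tau - 2s}\Bigr)\Bigl(\sum_{m \ge n_0} m^{2s - 100} |\eps_m|^2\Bigr).
\]
The first factor is finite for $s > \tau + 1$. Fixing such an $s$ and choosing any $C_0 > s - 50$, the second factor is also finite as soon as $|\eps_m| \le \delta\, m^{-C_0}$; moreover shrinking $\delta$ makes this factor as small as desired, so $\|I - \mathfrak{T}\|_{HS} < 1$ for all sufficiently small $\delta > 0$. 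Then $\mathfrak{T}$ is invertible on $(\ell^2_s(\N))^4$ via the Neumann series.

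Finally, for any $f \in \mathcal{S}_{rad}(\R^d)$ the Schwartz decay of $f$ and $\widehat{f}$ (together with that of their derivatives) places the unperturbed tuple $(f(\sqrt{2n}), f'(\sqrt{2n}), \widehat{f}(\sqrt{2n}), \widehat{f}'(\sqrt{2n}))_{n \ge n_0}$ inside $(\ell^2_s(\N))^4$ for any $s$. By construction $\mathfrak{T}$ sends this tuple to the perturbed sampling data \eqref{eq perturbed derivative wow}, so invertibility of $\mathfrak{T}$ recovers the unperturbed tuple, and plugging it into the CKMRV formula \eqref{eq interpol derivative} reconstructs $f$ itself. The only real subtlety is the need for the $l=2$ bound in \eqref{eq uniform derivatives} to carry out the Lipschitz estimate on first derivatives; everything else is a verbatim analogue of the Hilbert--Schmidt argument from \S \ref{sec origin}.
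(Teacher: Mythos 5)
Your argument is essentially identical to the paper's proof in \S\ref{sec ckmrv}: the same Hilbert--Schmidt estimate on $I-\mathfrak{T}$ obtained from the mean value theorem and the uniform bounds \eqref{eq uniform derivatives} (including the $l=2$ case to control differences of first derivatives), the same factorization of the double sum, and the same choice $s>\tau+1$. The only quibble is that convergence of $\sum_m m^{2s-100-2C_0}$ requires $C_0 > s-\tfrac{99}{2}$ rather than merely $C_0>s-50$, but since the theorem only asserts the existence of some admissible $C_0$ (the paper takes $\eps_m\lesssim m^{49-s}$), this does not affect the conclusion.
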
 

In the same spirit of \S \ref{sec main result}, one can obtain an interpolation formula with the values \eqref{eq perturbed derivative wow} from Theorem \ref{thm perturb derivatives}. 

We remark that, in the same way that we undertook our analysis for the Radchenko-Viazovska interpolating functions, we expect the functions $a_n,b_n$ in \cite[Theorem~1.9]{CKMRV2} should also satisfy 
some exponential-like decay. This fact, although possible, should be sensibly more technically involved than Theorem \ref{eq improvement decay}, due to the more complicated nature of the construction of the
interpolating functions with derivatives in dimensions 8 and 24. 

\subsection{Perturbed interpolation formulae for odd functions} Finally, in the same spirit of the results in Section \ref{perturbed radchenko viazovska}, we briefly comment on interpolation formulae for 
odd functions. Recall the following results from \cite[Section~7]{RV}: 
\begin{theorem}[Theorem 7 in \cite{RV}] There exist sequences of odd functions $d_m^{\pm} : \R \to \R, \, m \ge 0,$ belonging to the Schwartz class so that 
\[
\widehat{d_m^{\pm}} = (\mp i) d_m^{\pm}, \, d_m^{\pm}(\sqrt{n}) = \delta_{n,m} \sqrt{n}, \, n \ge 1.
\]
Moreover, $\lim_{x \to 0} \frac{d_m^+(x)}{x} = \delta_{0m}.$ These functions satisfy the uniform bound 
\[
|d_n^{\pm}(x)| \lesssim n^{5/2}, \, \forall x \in \R, n \ge 0,
\]
and, finally, for each odd and real Schwartz function $f:\R \to \R,$ 
\begin{equation}\label{eq interpolation odd} 
f(x) = d_0^+(x) \frac{f'(0) + i \widehat{f}'(0)}{2} + \sum_{n \ge 1} \left( c_n(x) \frac{f(\sqrt{n})}{\sqrt{n}} - \widehat{c_n}(x) \frac{\widehat{f}(\sqrt{n})}{\sqrt{n}} \right),
\end{equation}
where $c_n = (d_n^+ + d_n^-)/2,$ and the right-hand side of the sum above converges absolutely. 
\end{theorem}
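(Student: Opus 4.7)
The plan is to mimic the generating-function strategy from the even case (i.e. \cite[Theorem~3]{RV} and the surrounding machinery, which is also recalled in \S\ref{perturbed radchenko viazovska} above), but working with modular forms of weight $5/2$ instead of $3/2$, so that the extra factor of $\tau$ picked up under $S$ produces an odd Fourier eigenvector with eigenvalue $\mp i$ rather than $\pm 1$. Concretely, I would first produce, for each sign $\eps\in\{\pm\}$ and each $m\ge 0$, a weakly holomorphic modular form $h_m^{\eps}$ of weight $5/2$ for $\Gamma_\theta$ whose $q$-expansion at $i\infty$ is of the shape $q^m+\sum_{k>N(m)}c_k q^k$, and define candidate functions
\[
d_m^{\eps}(x)=\frac{x}{2}\int_{-1}^{1}h_m^{\eps}(z)\,e^{\pi i x^2 z}\,\mmd z.
\]
The existence of such $h_m^{\eps}$ can be arranged by writing them in the form $h_m^{\eps}(z)=\theta(z)^{5}P_m^{\eps}(1/J(z))\cdot E^{\eps}(z)$, where $P_m^{\eps}$ is a polynomial of degree $m$ and $E^{\eps}$ is a fixed factor (involving $1-2\lambda$ in the $\eps=-$ case) that encodes the $\chi_{\eps}$-equivariance, exactly paralleling the structure of $g_n^{\pm}$ in \cite{RV}.

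Next, I would verify the three structural properties. \emph{Oddness and Schwartzness} follow from the extra $x$ factor and from contour-shifting the path $[-1,1]$ up into $\mathbb{H}$, which uses the growth of $h_m^{\eps}$ near the cusps (as in Proposition \ref{prop explicit}). The \emph{Fourier eigenvalue} $\widehat{d_m^{\eps}}=\mp i\, d_m^{\eps}$ comes from combining the transformation rule of weight $5/2$ under $S$ with the Gaussian Fourier identity $\widehat{x e^{\pi i x^2 z}}(\xi)=(-i/z)^{3/2}\cdot(\xi/z)\,e^{\pi i \xi^2(-1/z)}$; the factor $(-i/z)^{3/2}$ together with the $z$ in the denominator combines with the modularity of $h_m^{\eps}$ to produce exactly the $\mp i$ sign. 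The \emph{interpolation identities} $d_m^{\eps}(\sqrt{n})=\delta_{mn}\sqrt{n}$ follow from pushing the contour $[-1,1]$ to the horizontal line $\mathrm{Im}(z)=T$ with $T\to\infty$: only the $q^m$ Fourier coefficient survives when $x^2=n$, and the explicit $q^m$ coefficient was normalized to $1$ in the first step.

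For $d_0^{+}$, the closed form $\sin(\pi x^2)/\sinh(\pi x)$ can be checked by noting that the generating function $L_{+}(\tau,x)=\sum_{m\ge 0}d_m^{+}(x)e^{i\pi m\tau}$ can be written, in analogy with Theorem \ref{thm RV functional}, as an explicit Eisenstein-type quotient in $\theta$, $J$, $\lambda$; its value on the cusp relation $(z,\tau)\to(i\infty,0)$ produces exactly the claimed closed form. The uniform bound $|d_n^{\pm}(x)|\lesssim n^{5/2}$ is then obtained by estimating the integral defining $d_n^{\pm}$ along the Radchenko--Viazovska contour $\ell=\{\mathrm{Re}\,J=1/64\}$ and invoking Hecke's Fourier-coefficient lemma (Lemma \ref{lemma fourier}) applied to the generating function $L_{\eps}(\tau,x)$; the extra weight $5/2-3/2=1$ compared with the even case translates into the growth exponent $5/2$ instead of $2$. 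Finally, the interpolation formula \eqref{eq interpolation odd} follows by applying the just-constructed functions to the auxiliary odd function
\[
g(x)=f(x)-d_0^{+}(x)\frac{f'(0)+i\widehat{f}'(0)}{2}-\sum_{n\ge 1}\left(c_n(x)\frac{f(\sqrt{n})}{\sqrt{n}}-\widehat{c_n}(x)\frac{\widehat{f}(\sqrt{n})}{\sqrt{n}}\right),
\]
which is odd, Schwartz, and vanishes at $\pm\sqrt{n}$ for all $n\ge 0$ together with its Fourier transform; an odd analogue of the Radchenko--Viazovska uniqueness step (proved by pairing $g$ against the $d_m^{\pm}$ and using a Poisson-type identity) forces $g\equiv 0$.

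The main obstacle is the weight-$5/2$ construction of the $h_m^{\eps}$: because $\Gamma_\theta$ has two cusps and the relevant space of weakly holomorphic forms is infinite-dimensional, one needs to pin down the $q$-expansion \emph{at both cusps} in order to make the $d_m^{\eps}$ land in the Schwartz class and satisfy the correct interpolation at $\sqrt{n}$ without introducing spurious contributions. This is a direct analogue of the difficulty encountered in \cite[Section 2]{RV} for the even case, and I would handle it by the same $\chi_{\eps}$-twisted slash-operator bookkeeping, together with a careful choice of the polynomials $P_m^{\eps}$ so that the expansion at the cusp $\tau=1$ starts with an appropriately high power of $q$.
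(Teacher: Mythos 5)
A preliminary remark on provenance: the paper does not prove this statement at all --- it is quoted verbatim as Theorem 7 of \cite{RV}, with the authors merely citing \cite[Section~7]{RV}. So your sketch has to be measured against the actual argument in that reference. In architecture it matches it: contour integrals of weakly holomorphic forms for $\Gamma_\theta$ against a Gaussian-type kernel, generating functions with functional equations under $T^2$ and $S$, control at the two cusps, Hecke's lemma (Lemma \ref{lemma fourier}) for the polynomial growth, and a uniqueness step to upgrade the construction to the full interpolation formula \eqref{eq interpolation odd}.

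There is, however, a concrete error that would derail the construction as you set it up: the weight of the forms $h_m^{\eps}$ must be $1/2$, not $5/2$. The relevant Fourier identity is
\begin{equation*}
\mathcal{F}\bigl[x\,e^{\pi i x^2 z}\bigr](\xi) \;=\; -i\,(-iz)^{-3/2}\,\xi\, e^{\pi i \xi^2(-1/z)},
\end{equation*}
so the odd kernel $x e^{\pi i x^2 z}$ carries the automorphy factor $(-iz)^{-3/2}$ (it behaves like a radial Gaussian in dimension $3$), and for $\int h(z)\,x e^{\pi i x^2 z}\,\mmd z$ to close up into a $\mp i$-eigenfunction after the change of variables $z \mapsto -1/z$ (which also contributes $\mmd z \mapsto \mmd z/z^2$) the form $h$ must have weight $2-\tfrac32=\tfrac12$. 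Your heuristic that the extra factor of $x$ pushes the weight \emph{up} from $3/2$ to $5/2$ goes in the wrong direction; with weight-$5/2$ forms the automorphy factors do not cancel and the key property $\widehat{d_m^{\eps}}=\mp i\,d_m^{\eps}$ fails. Accordingly the Ansatz should read $h_m^{\eps}=\theta\cdot P_m^{\eps}(1/J)\cdot(\cdots)$ rather than $\theta^5\cdot(\cdots)$, and your explanation of the exponent $5/2$ in the uniform bound (as reflecting the weight gap $5/2-3/2$) is not the right mechanism either: the extra half power relative to the even case comes from $\sup_{x}|x|e^{-\pi x^2\operatorname{Im}(\tau)}\sim \operatorname{Im}(\tau)^{-1/2}$ when one bounds the generating function near the real line before applying Lemma \ref{lemma fourier}. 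A smaller slip: to get $d_m^{\eps}(\sqrt{n})=\delta_{nm}\sqrt{n}$ by pushing the contour to $i\infty$ you need the principal part of $h_m^{\eps}$ at the cusp $i\infty$ to be $q^{-m}$ (a pole), not a leading term $q^{m}$. With these corrections the remaining steps you outline (two-cusp bookkeeping via the $\chi_{\eps}$-twisted slash operators, the closed form of $d_0^+$, and the final uniqueness argument) do line up with the proof in \cite{RV}.
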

As a direct consequence, we see that any real, odd, Schwartz function on the real line is determined uniquely by the union of its values at $\sqrt{n}$ and the values of its Fourier transform 
at $\sqrt{n}$ with $f'(0)$ and $\widehat{f}'(0).$ By employing the results in Section \ref{perturbed radchenko viazovska}, 
we will show that we can actually recover any such function from $\{f(\sqrt{n + \eps_n})\}_{n \ge 1} \cup \{\widehat{f}(\sqrt{n+\eps_n})\}_{n \ge 1} \cup \{f'(0)\}\cup\{\widehat{f}'(0)\}$ instead. 

Indeed, first of all, we start by noticing that the same techniques employed to refine the uniform estimates from Radchenko--Viazovska \cite{RV} can be applied to the functions $d_m^{\pm},$ as they are defined in a completely analogous way to the 
$b_n^{\pm}$ from Section \ref{perturbed radchenko viazovska}. By carrying out the same kind of estimates, we are able to obtain
\begin{equation}\label{eq improvement odd}
|d_n^{\pm}(x)| \lesssim n^{3/4} \log^{3/2} (1+n) e^{-c'|x|/\sqrt{n}}, \, \forall x \in \R, \, n \ge 1,
\end{equation}
for some absolute constant $c'>0.$ By the same analysis of the $\partial_x-$partial derivative of the generating function used in \S \ref{sec improve}, this readily implies that the derivatives of the $d_n^{\pm}$ satisfy morally the same decay; in fact, 
$|(d_n^{\pm})'(x)| \lesssim n^{5/4} \log^{3/2}(1+n) e^{-c''|x|/\sqrt{n}}, \, \forall x \in \R, \, n \ge 1,$ with $c''>0$ another absolute constant. 

We consider now the operator that takes a pair of sequences $(\{\alpha_n\},\{\beta_n\}) \in \ell^2_s(\N) \times \ell^2_s(\N)$, $s > 0$ to be chosen, into 
\[
\left\{\sum_{n \ge 0} (\alpha_n,\beta_n) C_n(\sqrt{m+\eps_m})\right\}_{m \ge 0}, 
\]
where we abbreviate $C_n(x) = \begin{pmatrix} 
                             \frac{c_n(x)}{\sqrt{n}} & \frac{\widehat{c_n}(x)}{\sqrt{n}} \\ 
                             - \frac{\widehat{c_n}(x)}{\sqrt{n}} & \frac{c_n(x)}{\sqrt{n}} \\ 
                             \end{pmatrix} $. Let us denote this operator by $\mathcal{V}.$ From \eqref{eq interpolation odd} and the fact that the function 
$d_0^+(x) = \frac{\sin(\pi x^2)}{\sinh(\pi x)}$ \emph{vanishes} together with its Fourier transform at $\pm \sqrt{n}, \, n \in \N,$ we know that the identity operator on $\ell^2_s(\N) \times \ell^2_s(\N)$ 
may be written as 
\[
\left\{\sum_{n \ge 0} (\alpha_n,\beta_n) C_n(\sqrt{m})\right\}_{m \ge 0}.
\]
Therefore, the techniques from \S \ref{sec main result}, \S \ref{sec ckmrv} and \ref{sec origin}, together with our previous considerations in this subsection,
allow us to deduce the following result: 
\begin{theorem}\label{thm perturb odd} There is $\delta > 0$ so that, in case $|\eps_n| \le \delta n^{-\frac{7}{4}},$ then for each $f \in \mathcal{S}_{odd}(\R)$ real, the values 
\[
\left(f(\sqrt{1+\eps_n}), f(\sqrt{2+\eps_2}),\dots\right)
\]
and 
\[
\left(\widehat{f}(\sqrt{1+\eps_n}), \widehat{f}(\sqrt{2+\eps_2}),\dots\right)
\]
allow us to recover uniquely the values $\left(f(1),f(\sqrt{2}),f(\sqrt{3}),\dots\right)$ and 
\linebreak
$\left(\widehat{f}(1),\widehat{f}(\sqrt{2}),\widehat{f}(\sqrt{3}),\dots\right).$ In particular, given the values 
$$\{f(\sqrt{n + \eps_n})\}_{n \ge 1} \cup \{\widehat{f}(\sqrt{n+\eps_n})\}_{n \ge 1}\cup \{f'(0)\}\cup\{\widehat{f}'(0)\},$$
we can uniquely recover any real function $f \in \mathcal{S}_{odd}(\R).$ 
\end{theorem}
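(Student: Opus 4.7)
The plan is to adapt the functional-analytic framework of \S\ref{sec main result} and \S\ref{sec origin} to the odd interpolation formula \eqref{eq interpolation odd}. Fix $s$ sufficiently large and introduce the operator
$$\mathcal{V}:\ell^2_s(\N)\times\ell^2_s(\N)\to\ell^2_s(\N)\times\ell^2_s(\N)$$
built from the matrix $C_n(x)$ defined in the excerpt: given $(\{\alpha_n\},\{\beta_n\})_{n\ge 1}$, the $m$-th coordinate of the image (for $m\ge 1$) is
$$\sum_{n\ge 1}(\alpha_n,\beta_n)\cdot C_n(\sqrt{m+\eps_m}).$$
Since $c_n(\sqrt{m})=\delta_{nm}\sqrt{m}$ and $\widehat{c_n}(\sqrt{m})=0$ for every $m\ge 1$, the operator $\mathcal{V}$ reduces to the identity when $\eps_m\equiv 0$. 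Moreover, evaluating $\mathcal{V}$ on the specific input $(\{f(\sqrt{n})\},\{\widehat{f}(\sqrt{n})\})_{n\ge 1}$ and applying \eqref{eq interpolation odd} together with the identities $\widehat{d_0^+}=-id_0^+$ and $\widehat{\widehat{c_n}}=-c_n$ (the latter because $c_n$ is odd), one checks that the output equals
$$\bigl(f(\sqrt{m+\eps_m})-d_0^+(\sqrt{m+\eps_m})\kappa,\;\widehat{f}(\sqrt{m+\eps_m})+id_0^+(\sqrt{m+\eps_m})\kappa\bigr)_{m\ge 1},$$
where $\kappa=\tfrac12(f'(0)+i\widehat{f}'(0))$ is entirely determined by the given derivative data.

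The technical core of the argument is the Hilbert--Schmidt bound $\|I-\mathcal{V}\|_{HS(\ell^2_s(\N)\times\ell^2_s(\N))}<1$. Applying the mean value theorem on the interval between $\sqrt{m}$ and $\sqrt{m+\eps_m}$, together with the improved decay estimate \eqref{eq improvement odd} and its derivative counterpart $|(d_n^{\pm})'(x)|\lesssim n^{5/4}\log^{3/2}(1+n)e^{-c''|x|/\sqrt{n}}$, every matrix entry of $I-\mathcal{V}$ satisfies
$$|(I-\mathcal{V})_{mn}|\lesssim n^{3/4}\log^{3/2}(1+n)\,e^{-c\sqrt{m/n}}\,\frac{|\eps_m|}{\sqrt{m}},\quad m,n\ge 1.$$
Squaring, weighting by $((1+m)/(1+n))^{2s}$ exactly as in the proof of Claim \ref{claim HS}, inserting the hypothesis $|\eps_m|\le\delta m^{-7/4}$, and performing the substitution $m=nu^2$ in the inner sum over $m$ controls that sum by $Cn^{2s-7/2}$ for each fixed $n$ (provided $s>7/4$); summing in $n$ then leaves $\delta^2\sum_n n^{-2}\log^3(1+n)$, which is finite. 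Choosing $\delta$ small makes this quantity less than $1$, making $\mathcal{V}$ an isomorphism.

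With $\mathcal{V}^{-1}$ in hand, the recovery proceeds as follows: from the data $\{f(\sqrt{n+\eps_n}),\widehat{f}(\sqrt{n+\eps_n})\}_{n\ge 1}\cup\{f'(0),\widehat{f}'(0)\}$, compute $\kappa$, form the corrected right-hand side displayed above, and apply $\mathcal{V}^{-1}$ to obtain $(\{f(\sqrt{n})\},\{\widehat{f}(\sqrt{n})\})_{n\ge 1}$; substituting the recovered values, along with $\kappa$, back into \eqref{eq interpolation odd} reconstructs $f$ itself, yielding the claimed uniqueness. The main technical obstacle is establishing the derivative decay $|(d_n^{\pm})'(x)|\lesssim n^{5/4}\log^{3/2}(1+n)e^{-c''|x|/\sqrt{n}}$: one repeats the argument of Proposition \ref{prop explicit} and the cocycle/functional-equation computation of \S\ref{sec improve} with $\partial_x F_{\eps}^k$ in place of $F_{\eps}^k$, but for the odd generating series built from $\theta(z)^3$ and $\theta(z)^3(1-2\lambda(z))$ multiplied by polynomials of degree $n$ in $1/J(z)$. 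The paper's own remark that the analysis of the $\partial_x$-derivative applies here indicates this is essentially parallel to \S\ref{sec improve}, making the verification tedious but not substantively new.
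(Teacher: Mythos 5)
Your proposal is correct and follows essentially the same route as the paper, which only sketches this proof: the improved decay \eqref{eq improvement odd} and its derivative analogue, the operator $\mathcal{V}$ built from $C_n$, a Hilbert--Schmidt bound on $I-\mathcal{V}$ via the mean value theorem, and inversion; your arithmetic ($\delta^2\sum_n n^{-2}\log^3(1+n)$ under $|\eps_m|\le\delta m^{-7/4}$, with $s$ large) checks out. Your explicit tracking of the correction term $d_0^+(\sqrt{m+\eps_m})\,\kappa$ is in fact more careful than the paper's sketch, which silently identifies $\mathcal{V}(\{f(\sqrt n)\},\{\widehat f(\sqrt n)\})$ with the perturbed values; as you rightly observe, this correction forces the use of $f'(0),\widehat f'(0)$ in the recovery, so what is really proved is the ``in particular'' clause of the statement.
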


As previously mentioned, we do not carry out the details here, for their similarities with the proof of theorems \ref{eq improvement decay} and \ref{thm mainthm}.

\section{Comments and Remarks}\label{sec final}

In this section, we gather some remarks about the problems and techniques discussed, as well as state some results we expect to be true.

\subsection{Maximal perturbed Interpolation Formulae for Band-limited functions} 
In Section \ref{sec BL}, we have seen how our basic functional analysis techniques can be employed in order to deduce new interpolation formulae for band-limited functions. Although Kadec's proof also uses the basic fact that, whenever a perturbation of the identity is sufficiently small, then we can basically `invert' an operator, he then proceeds to 
find that the set of exponentials $\{\exp(2\pi i (n+\eps_n) x)\}_{n \ge 0}$ is a Riesz basis for $L^2(-1/2,1/2)$ if $\sup_n |\eps_n| < 1/4$ by means of \emph{orthogonality} considerations. Indeed, 
one key strategy in his estimates is to expand in the different complete orthogonal system 
\[
\{1,\cos(2\pi n t), \sin((2n-1)\pi t)\}_{n \ge 1}
\]
and use the properties of this expansion. Our results, as much as they do not come so close to Kadec's threshold, follow a slightly different path: instead of using the orthogonality of a different system, 
we choose to work directly with discrete analogues of the Hilbert transform and estimate over those. Although we do not reach -- by a $0.011$ margin -- the sharp $1/4-$perturbation result, 
one advantage of our approach is that it yields bounds for perturbing \emph{any} kind of interpolation formulae with derivatives. Indeed, following the line of thought of Vaaler, many authors have investigated 
the property of recovering the values of a function $f \in L^2(\R)$ band-limited to $[-k/2,k/2]$ from the values of its $(k-1)-$first derivatives (see, e.g., \cite{Lit02} and \cite{FelipeLittman}). Our approach
in \S \ref{sec BL} in order to prove Theorem \ref{thm vaaler perturb} generalizes easily to the case of several derivatives by an easy modification. It can be summarized as follows: 
\begin{theorem}\label{thm derivatives bl} There is $L(k) > 0$ so that if $\sup_{n \in \Z} |\eps_n| < L(k),$ then any function $f \in L^2(\R)$ band-limited to $[-k/2,k/2]$ is uniquely determined by the values of 
\[
f^{(l)}(n+\eps_n), \, n \in \Z, l=0,1,\dots,k-1.
\]
\end{theorem}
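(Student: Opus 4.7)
The plan is to follow the strategy of the proof of Theorem \ref{thm vaaler perturb}, which is itself an adaptation of the argument for Theorem \ref{th:perturbed_PW}. The starting point is the higher-order Vaaler-type interpolation formula (see \cite[Corollary~2]{Lit02} and \cite{FelipeLittman}): there exist functions $\phi_{l,k}\in PW_{k\pi}$, $0\le l\le k-1$, with the Hermite-type property $\phi_{l,k}^{(m)}(n)=\delta_{ml}\delta_{n0}$ for $0\le m\le k-1$ and $n\in\Z$, such that every $f\in PW_{k\pi}$ admits the representation
\[
f(x)=\sum_{n\in\Z}\sum_{l=0}^{k-1}f^{(l)}(n)\,\phi_{l,k}(x-n),
\]
where each $\phi_{l,k}$ has the shape $\left(\tfrac{\sin(\pi x)}{\pi}\right)^k$ times a rational function in $x$. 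I would then define the perturbed sampling operator
\[
\mathcal{A}_{\varepsilon}:(\ell^2(\Z))^k\to(\ell^2(\Z))^k,\qquad(\mathcal{A}_{\varepsilon}\mathbf{a})_{n,m}=\sum_{l=0}^{k-1}\sum_{j\in\Z}a_{j,l}\,\phi_{l,k}^{(m)}(n+\varepsilon_n-j),
\]
so that, on the image of the correspondence $\mathbf{a}\mapsto f=\sum_{j,l}a_{j,l}\phi_{l,k}(\cdot-j)$, we have $(\mathcal{A}_{\varepsilon}\mathbf{a})_{n,m}=f^{(m)}(n+\varepsilon_n)$. The Hermite property forces $\mathcal{A}_{0}=I$, so the whole argument reduces to proving $\|\mathcal{A}_{\varepsilon}-I\|<1$ whenever $L:=\sup_n|\varepsilon_n|$ is small enough. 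Once this is established, invertibility of $\mathcal{A}_{\varepsilon}$ yields uniqueness and, through \cite[Theorem~1.13]{Young} together with the adjoint-based construction used at the end of the proof of Theorem \ref{th:perturbed_PW}, also a sampling inequality and an absolutely convergent reconstruction formula $f(x)=\sum_{n,l}f^{(l)}(n+\varepsilon_n)g_{n,l}(x)$ with $g_{n,l}\in PW_{k\pi}$.

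To estimate $\|\mathcal{A}_{\varepsilon}-I\|$ I would split each entry $\phi_{l,k}^{(m)}(n+\varepsilon_n-j)$ into the diagonal term $j=n$, which by the Hermite property contributes a local quantity of size $\mathcal{O}_k(L)$, and the off-diagonal terms $j\neq n$. For the latter, Leibniz's rule applied to $\phi_{l,k}=(\sin^k(\pi\,\cdot)/\pi^k)\cdot R_{l,k}$ with $R_{l,k}$ rational, together with the elementary identity $\sin(\pi(\varepsilon_n+n-j))=(-1)^{n-j}\sin(\pi\varepsilon_n)$, factors $\phi_{l,k}^{(m)}(n+\varepsilon_n-j)$ as a finite linear combination of terms of the form
\[
\sin^{k-i}(\pi\varepsilon_n)\cos^{i}(\pi\varepsilon_n)\cdot\frac{(-1)^{(n-j)(k-i)}}{(n+\varepsilon_n-j)^p},\qquad 0\le i\le m,\quad 1\le p\le 2k-l-m,
\]
so that the off-diagonal piece becomes a bounded linear combination of the higher-order discrete Hilbert transforms $\mathcal{H}^{p}_{\varepsilon}$ introduced in the proof of Theorem \ref{thm vaaler perturb}, with coefficients vanishing at $\varepsilon_n=0$ at order at least $k-m\ge 1$. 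The bounds
\[
\|\mathcal{H}^{p}_{\varepsilon}\|\le\gamma_p(L):=\|\mathcal{H}^{p}_{0}\|+\frac{(1+L)^p-1}{(1-L)^p}\cdot 2\zeta(p+1),\qquad \|\mathcal{H}^{p}_{0}\|=\frac{(2\pi)^p b_p}{p!},
\]
obtained along the lines of \eqref{eq:H_p_norm_control2} and \cite[Corollary~2]{Lit02}, together with the Cauchy--Schwarz inequality applied across the $k^2$ blocks, yield an explicit continuous bound of the shape $\|\mathcal{A}_{\varepsilon}-I\|\le F_{k}(L)$ with $F_{k}(0)=0$. Setting $L(k):=\sup\{L\in[0,1):F_{k}(L)<1\}>0$ then completes the proof.

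The main obstacle is computational rather than conceptual: as $k$ grows, the Leibniz expansion of $\phi_{l,k}^{(m)}$ produces a rapidly increasing number of cross-terms, and bookkeeping the coefficients and matching each surviving sum to the correct $\mathcal{H}^{p}_{\varepsilon}$ becomes cumbersome. However, since the qualitative statement does not depend on the exact value of $L(k)$, and since the case $k=2$ already appears in detail in the proof of Theorem \ref{thm vaaler perturb}, the scheme above suffices.
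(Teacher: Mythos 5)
Your proposal is correct and follows exactly the route the paper indicates: the paper in fact \emph{omits} the proof of Theorem \ref{thm derivatives bl}, stating only that it follows from Littmann's Hermite-type interpolation formula \cite{Lit02} by the same scheme as theorems \ref{th:perturbed_PW} and \ref{thm vaaler perturb}, and your sketch supplies precisely those details (the perturbed sampling operator on $(\ell^2(\Z))^k$, the reduction of the off-diagonal blocks to the operators $\mathcal{H}^p_{\varepsilon}$ with coefficients vanishing as $L\to 0$, and the Neumann-series inversion). The only quibble is cosmetic: the maximal pole order arising from $\phi_{l,k}^{(m)}$ is $k-l+m$ rather than $2k-l-m$, but since you only use an upper bound on the range of $p$ this does not affect the argument.
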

A natural question that connects our results to Kadec's results is about the \emph{best} value of $L(k)$ so that Theorem \ref{thm derivatives bl} holds. We do not have evidence to back any concrete
conjecture, but we find possible that the threshold $L(k) = \frac{1}{4}$ is kept for higher values of $k \in \N.$ We speculat that, in order to prove such a result, one would need to find an appropriate 
hybrid of our techniques and Kadec's techniques (see for instance Section 10 in \cite[Chapter~1]{Young}), taking into account properties of the discrete Hilbert transforms as well as orthogonality results. 

\subsection{Theorem \ref{eq improvement decay}, optimal decay rates for interpolating functions and maximal perturbations} In Theorem \ref{eq improvement decay}, we have improved the uniform bound obtained by 
Radchenko and Viazovska \cite{RV} and, more recently, the sharper uniform bound by Bondarenko, Radchenko and Seip \cite{BRS} on the interpolating functions $a_n$ to one that \emph{decays} with $x;$ namely, we have that
\[
|a_n(x)| \lesssim n^{1/4}\log^{3/2}(1+n) \left(e^{-c|x|^2/n} 1_{|x| < Cn} + e^{-c|x|} 1_{|x|>Cn}\right),
\]
holds for all $n \in \N,$ where $C,c>0$ are two fixed positive constants. Although this improves the decay rates from before, the power $n^{1/4}$ found here and in \cite{BRS} in the growth seems likely not to be optimal; to that regard, we pose the following: 

\begin{question}\label{quest opti} What is the best decay rate for $a_n$ as in Theorem \ref{eq improvement decay}? Can one prove that $\sup_{x \in \R}|a_n(x)| = \mathcal{O}(1)$ in $n$? 
\end{question}

This conjectured growth seems to be the best possible, due to the recent findings of Bondarenko--Radchenko--Seip \cite{BRS}, which show that, for each $N \gg 1,$ the average
\[
\frac{1}{N+1} \sum_{ k \le N} |a_k(x)|^2 
\]
grows slower than some power of $\log{N}.$ 

Notice that, by a simple modification of the computations made in \S \ref{sec main result}, an affirmative answer to Question \ref{quest opti} yields an immediate improvement in the range of $\eps_i$ that we
allow for the theorems in \ref{sec main result}. Indeed, we get automatically that $|\eps_i| \lesssim i^{-1}$ is allowed in such results. On the other hand, this seems to be the best possible result one can 
achieve with our current methods, as the mean value theorem implies that $\sup_{x \in \R} |a_n'(x)| \gtrsim \sqrt{n}.$ 

In particular, all indicates that one needs a new idea in order to prove the following conjecture:

\begin{conjecture}[Maximal perturbations]\label{conj max pert} Let $f \in \mathcal{S}_{even}(\R)$ be a real function. Then there is $\theta >0$ so that, if $|\eps_i| < \theta, \, \forall \, i \in \N,$ 
$f$ can be uniquely recovered from its values 
\[
f(0),f(\sqrt{1+\eps_1}),f(\sqrt{2+\eps_2}),\dots,
\]
together with the values of its Fourier transform 
\[
\widehat{f}(0), \widehat{f}(\sqrt{1+\eps_1}), \widehat{f}(\sqrt{2+\eps_2}), \dots.
\]
\end{conjecture}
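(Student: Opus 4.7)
The plan is to extend the operator-theoretic framework of Section \ref{perturbed radchenko viazovska} by combining it with Fredholm theory and a sharper a priori bound on the interpolating basis. Working in $\ell^2_s(\N) \times \ell^2_s(\N)$ with $s$ large, one considers the perturbed operator $\tilde{T}$ and tries to show it remains an isomorphism whenever $\|\{\eps_i\}\|_{\infty} < \theta$ for a small absolute constant $\theta$. Since we may no longer invoke Neumann-series inversion (the operator norm of $\tilde{T}-I$ does not shrink without decay on $\eps_i$), the strategy must pivot to proving injectivity plus closed range plus index zero, i.e.\ to casting the problem entirely in Fredholm-theoretic language.

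The key steps in order would be as follows. First, introduce a cutoff parameter $K$ and decompose $\tilde T = I + R_K^{<} + R_K^{>} + E$, where $R_K^{<}$ encodes the perturbation at indices $k \le K$ (finite rank, hence compact), $R_K^{>}$ encodes the tail perturbation, and $E$ is the finite-dimensional correction at $k=0$ coming from the failure of Poisson summation for generic sequences. Second, use the improved exponential-in-$|x|/\sqrt n$ bounds from Theorem~\ref{eq improvement decay}, combined with a (hoped-for) uniform-in-$n$ upper bound $\sup_x|a_n(x)|=\mathcal O(1)$ (Question \ref{quest opti}) and its Bernstein-type differentiated version $\sup_x|a_n'(x)|=\mathcal O(\sqrt n)$, to show that $R_K^{>}$ is compact on $\ell^2_s(\N)\times\ell^2_s(\N)$ with norm $o_{K\to\infty}(1)$ uniformly for $\|\{\eps_i\}\|_\infty<\theta$. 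Third, appeal to the invariance of Fredholm index under compact perturbations, as in Theorem \ref{thm alternative}, to conclude that $\tilde{T}$ is Fredholm of index zero. Finally, rule out nontrivial kernel: suppose some real even $f \in \mathcal{S}(\R)$ vanishes, together with $\widehat{f}$, on $\pm\sqrt{k+\eps_k}$, and use a Laplace-transform representation as in Section \ref{sec unique small} together with a Descartes-rule count (Proposition \ref{prop rule of signs}) to show that, for $\theta$ small enough, the zero pattern forces $f\equiv 0$.

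The main obstacle is squarely in the second step. The mean value theorem currently yields $|(R_K^{>})_{ij}| \lesssim (|\eps_i|/\sqrt i)\, j^{3/4}\log^{3/2}(1+j)\, e^{-c\sqrt{i/j}}$, which is Hilbert--Schmidt only once $|\eps_i|=o(i^{-5/4})$, and even an $\mathcal O(1)$ bound on $a_n$ (thus $\mathcal O(\sqrt n)$ on $a_n'$) would push the threshold only to $|\eps_i|=o(i^{-1})$, still short of uniform $\eps_i$. Breaking this barrier will almost certainly require a genuinely new ingredient: a Kadec-type expansion adapted to the modular-forms construction of $\{a_n\}$, or a Beurling--Malliavin-style density result for the node sets $\{\pm\sqrt{n+\eps_n}\}$ paired on both sides of the Fourier transform. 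I expect that the decisive input is an orthogonality structure on the $b_n^{\pm}$ analogous to the expansion in $\{1,\cos(2\pi nt),\sin((2n-1)\pi t)\}$ underlying Kadec's $1/4$-theorem; finding or ruling out such a structure is, in my view, the crux of the conjecture.
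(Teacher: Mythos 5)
The statement you are addressing is stated in the paper as an open conjecture (Conjecture \ref{conj max pert}); the authors explicitly do not prove it and argue that their methods cannot reach it, so there is no proof in the paper to compare yours against. Your proposal is a programme rather than a proof, and you say so yourself: the decisive second step is left unestablished. The gap is exactly where you locate it, but it is worth stressing that it is not merely a quantitative shortfall. For uniformly bounded perturbations $|\eps_i|<\theta$ the entries of your tail operator $R_K^{>}$ are of size $\frac{\eps_i}{\sqrt i}\, j^{3/4}\log^{3/2}(1+j)e^{-c\sqrt{i/j}}$ after the mean value theorem, and the Schur or Hilbert--Schmidt row sums then carry a factor of order $i^{5/4}\eps_i$, which diverges along the rows when $\eps_i$ does not decay. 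So $R_K^{>}$ is not shown to be bounded, let alone compact with norm $o_{K\to\infty}(1)$; without compactness the index-invariance step has no hypothesis to act on, and the Fredholm reduction collapses. Even granting the optimistic answer to Question \ref{quest opti} (namely $\sup_x|a_n(x)|=\mathcal{O}(1)$, hence at best $\sup_x|a_n'(x)|=\mathcal{O}(\sqrt n)$, since the mean value theorem applied to $a_n(\sqrt n)=1$, $a_n(\sqrt{n+1})=0$ forces $\sup_x|a_n'(x)|\gtrsim\sqrt n$), the threshold only improves to $|\eps_i|\lesssim i^{-1}$, which is precisely the barrier the authors themselves identify in Section \ref{sec final}.

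Your final step also overreaches. The Descartes-rule and Laplace-transform argument of Section \ref{sec unique small} (Lemma \ref{lemma inject k_0} and Proposition \ref{prop rule of signs}) rules out kernel elements that are \emph{finite} linear combinations of the $a_n,\widehat{a_n}$, because only finitely many nodes are displaced there and the remaining infinite part is handled by a small-norm perturbation estimate that again needs decay of the $\eps_k$. Under a uniform perturbation of all nodes a putative kernel element is a genuinely infinite combination, the associated integrand is no longer $\theta^3$ times a fixed-degree polynomial in $1/J$, and the sign-change count is unbounded, so the rule of signs yields nothing. In short, your diagnosis of the crux --- some Kadec-type orthogonality structure on the $b_n^{\pm}$, or a Beurling--Malliavin-style density input for $\{\pm\sqrt{n+\eps_n}\}$ --- agrees with the authors' own assessment of what is missing, but the proposal as written does not prove the conjecture and cannot be completed with the tools available in the paper.
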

It might not be an easy task to prove Conjecture \ref{conj max pert} even with a new idea starting from our techniques, but we believe that the following version stands a chance of being more tractable with 
the current methods:
\begin{conjecture}[Maximal perturbations, weak form]\label{conj max pert weak} Let $f \in \mathcal{S}_{even}(\R)$ be a real function. Then, for each $a>0$, there is $\delta >0$ so that, if $|\eps_i| \le \delta k^{-a},$ 
then $f$ can be uniquely recovered from its values 
\[
f(0),f(\sqrt{1+\eps_1}),f(\sqrt{2+\eps_2}),\dots,
\]
together with the values of its Fourier transform 
\[
\widehat{f}(0), \widehat{f}(\sqrt{1+\eps_1}), \widehat{f}(\sqrt{2+\eps_2}), \dots.
\]
\end{conjecture}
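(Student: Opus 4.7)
The strategy is to retain the operator-theoretic framework of Theorem \ref{thm mainthm}, viewing the perturbed interpolation as the operator
$$\tilde T : (\{x_i\},\{y_i\}) \mapsto (\{\mathfrak{G}(\sqrt{k+\eps_k})\},\{\widehat{\mathfrak{G}}(\sqrt{k+\eps_k})\})$$
on $\ell^2_s(\N) \times \ell^2_s(\N),$ and to prove its invertibility through a Schur-type test. The one lever we have for widening the admissible range of $\eps$ is the decay of $a_n$ and $a_n'$ supplied by Theorem \ref{eq improvement decay}, so the first move is to sharpen those estimates as much as possible. Concretely, a pointwise bound of the form
$$|a_n(x)| \lesssim n^{\gamma}\, e^{-c|x|/\sqrt{n}}, \qquad |a_n'(x)| \lesssim n^{\gamma+1/2}\, e^{-c|x|/\sqrt{n}},$$
fed into the mean-value inequality
$$|a_j(\sqrt{i+\eps_i}) - \delta_{ij}| \lesssim \frac{|\eps_i|}{\sqrt{i}}\, j^{\gamma+1/2}\, e^{-c\sqrt{i/j}},$$
and then into the same Schur estimate carried out in \S \ref{sec main result}, would yield $\|I - \tilde T\| < 1$ whenever $|\eps_i| \lesssim i^{-(1/2+\gamma)}.$ Any reduction of the admissible $\gamma$ towards $0$ therefore directly improves the perturbation exponent.

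The first step, then, is to revisit \S \ref{sec improve} and push $\gamma$ as close to $0$ as possible. The natural route is to replace the pointwise application of Lemma \ref{lemma fourier} by an averaged version, integrating $F_{\eps}^k(\cdot + iy, x)$ over a horizontal strip and exploiting the Bondarenko--Radchenko--Seip averages of Lemma \ref{lemma average bound} more efficiently at each choice of $k$. An affirmative answer to Question \ref{quest opti} would deliver $\gamma = 0,$ and the parallel estimate for $a_n'$ obtained from $\partial_x F_\eps^k$ in the manner of the proof of Theorem \ref{eq improvement decay} would then yield Conjecture \ref{conj max pert weak} in the range $a > 1/2$ unconditionally.

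The principal obstacle is pushing past the $a = 1/2$ barrier, which appears intrinsic to a Neumann-series argument based on a single derivative: the substitution $\sqrt{k+\eps_k} - \sqrt{k} \sim \eps_k/(2\sqrt{k})$ costs only $k^{-1/2},$ while no generic pointwise bound on $a_n'(\sqrt{k})$ can beat the $\sqrt{n}$ produced by one differentiation of a function of order $\mathcal{O}(1)$ oscillating on scale $1/\sqrt{n},$ so the Schur test stalls at $\eps_k \lesssim k^{-1}.$ To go further, two routes seem plausible, each demanding genuinely new input. The first exploits the zero pattern $a_n(\sqrt{m}) = \delta_{nm}$ through a higher-order Taylor expansion
$$a_n(\sqrt{k+\eps_k}) - \delta_{nk} = \sum_{\ell \ge 1} \frac{(\sqrt{k+\eps_k} - \sqrt{k})^{\ell}}{\ell!}\, a_n^{(\ell)}(\sqrt{k}),$$
coupled with an arithmetic identity for the $a_n^{(\ell)}(\sqrt{k})$ inherited from the modular construction of the $b_n^{\pm},$ which could reveal cancellations between successive terms. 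The second, more structural, is to replace the fixed-operator Neumann argument by a homotopy in $\eps$: deform the zero sequence to $\{\eps_k\}$ along a continuous family $\{\tilde T_t\}_{t \in [0,1]}$ and combine a Fredholm-index argument in the spirit of \S \ref{sec origin} with index continuity to propagate invertibility without requiring $\|I - \tilde T_t\|$ to stay small along the entire path. Identifying the right additional input in either direction is where the main difficulty is expected to lie.
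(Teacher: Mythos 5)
There is a fundamental problem here: the statement you were asked to prove is stated in the paper as an open \emph{conjecture} (it is Conjecture \ref{conj max pert weak}, in the closing section of remarks), and the paper offers no proof of it — Theorem \ref{thm mainthm} is explicitly described only as ``partial progress'' towards it. Your submission is consistent with that status: it is a research programme, not a proof. You correctly identify the operator framework and the single lever available (the growth exponent $\gamma$ of $a_n$ and $a_n'$ in Theorem \ref{eq improvement decay}), but the two routes you propose for going beyond it — higher-order Taylor expansion with hoped-for arithmetic cancellations, and a homotopy-plus-Fredholm-index continuation — are both left entirely unexecuted, and you say so yourself. The second route moreover cannot work as described: continuity of the Fredholm index along a path preserves $\alpha-\beta$ but not injectivity, so invertibility is not ``propagated''; this is exactly why \S\ref{sec origin} only obtains a dichotomy (Theorem \ref{thm alternative}) rather than an interpolation formula. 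Since the conjecture demands the conclusion for \emph{every} $a>0$, i.e.\ for arbitrarily slowly decaying perturbations, no refinement of the Neumann-series/Schur-test argument alone can reach it, and no substitute argument is supplied.

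There is also a quantitative slip that distorts your assessment of where the method stalls. Feeding $|a_n|\lesssim n^{\gamma}$, $|a_n'|\lesssim n^{\gamma+1/2}$ into the Schur computation of \S\ref{sec main result} gives the first Schur sum of order $\eps_i\, i^{(\gamma+1/2)+1/2+\theta}$, hence the threshold $|\eps_i|\lesssim i^{-(\gamma+1)}$, not $i^{-(1/2+\gamma)}$: with the paper's $\gamma=1/4$ this correctly reproduces the $i^{-5/4}$ of Theorem \ref{thm mainthm}, and with the optimal $\gamma=0$ of Question \ref{quest opti} it gives $i^{-1}$, i.e.\ the range $a\ge 1$ — exactly the barrier the paper identifies via $\sup_x|a_n'(x)|\gtrsim\sqrt{n}$ — and not $a>1/2$ as you claim. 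Your own later remark that the test ``stalls at $\eps_k\lesssim k^{-1}$'' contradicts your earlier claim of reaching $a>1/2$ unconditionally. In short: the framework is the right one and matches the paper's, but no proof of the conjecture is given, and the intermediate claim about the $a>1/2$ range does not follow from the estimates cited.
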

In this framework, the results in \S \ref{sec main result} may be regarded as partial progress towards this conjecture. Notice that, by the remarks of \S \ref{sec unique small}, both versions of the conjecture 
imply that for each $\alpha  \in (0,1/2),$ there is $c_{\alpha}> 0$ so that if an even, real Schwartz function $f$ satisfies that $f(c_1 n^{\alpha}) = \widehat{f}(c_2 n^{\beta}) = 0$ and $c_1 < c_{\alpha}, \, c_2 < c_{\beta},$
then $f \equiv 0.$ These results can be compared, for instance, with our previous results in \cite{RS1}. 

\section*{Acknowledgements} 

We would like to thank Danylo Radchenko for several comments and suggestions in both early and later stages of development of this manuscript. We would also like to thank Felipe Gon\c calves for 
helpful discussions that led to the development of \S \ref{sec origin}. Finally, J.P.G.R. acknowledges financial support from CNPq.

\end{document}